\newtheorem{example}{Example}[section]
\newtheorem{remark}{Remark}[section]
\def\a{\alpha} \def\b{\beta}
\def\dref#1{(\ref{#1})}
 \def\sin{\mbox{sin}\,}
 \def\dfrac{\displaystyle\frac}
\def\be{\begin{equation}}
\def\bel{\begin{equation}\label}
\def\ee{\end{equation}}
\def\ba{\begin{array}}
\def\ea{\end{array}}
\def\banl{\begin{eqnarray}\label}
\def\ean{\end{eqnarray}}
 \def\bna{\begin{eqnarray}}
\def\ena{\end{eqnarray}} \def\dref#1{(\ref{#1})}
\title{Identification of Disturbed Control Systems\thanks{This work was supported in part by the National Natural Science Foundation of China under Grants 61422308 and 11688101.}}
\author{Chanying Li\thanks{C.~Li is with  the Key Laboratory of Systems and Control, Academy of Mathematics and Systems Science, Chinese Academy of Sciences, Beijing 100190, P.~R.~China and the School of Mathematical Sciences,  University of Chinese Academy of Sciences, Beijing 100049, P.~R.~China.}}
\begin{document}
\maketitle

\begin{abstract}
This paper studies the identification of nonlinearly parameterized control systems in  given experiments. Several identifiability criteria are established and an implementable algorithm  is proposed for practicality  with the convergence rate explicitly computed.
\end{abstract}

\begin{keywords}
Identifiability,  strong consistency,  nonlinear estimator, parametric systems, noises
\end{keywords}

\begin{AMS}
  93D15, 93D20, 93E24
\end{AMS}

\pagestyle{myheadings}
\thispagestyle{plain}
\markboth{ }{ }

\section{Introduction}
Consider the nonlinearly parameterized control system
\begin{equation}\label{hmodel}
\left\{
\begin{array}{l}
x_{t}=f(\theta,u_{t-1},\chi_{t-1})+w_{t}\\
y_t=h(\theta,x_t)+v_t
\end{array}
,\quad t\geq 1,
\right.
\end{equation}
where  $x_t,  y_t,   u_t$ and $(w_t, v_t) $ represent   the  $p\times 1$ state vector,  $q\times 1$ output  vector,  $r\times 1$ input vector and $(p+q)\times 1$  noise vector, respectively. Denote
$\chi_t\triangleq (x_t,\ldots, x_{t-m+1})$ as the state regressor.  
Unknown parameter $\theta$ is non-random and belongs  to a  known nondegenerate compact hyperrectangle $\Theta\subset \mathbb{R}^n$. Moreover,
$f: \mathbb{R}^n\times \mathbb{R}^{r}\times \mathbb{R}^{pm}\to \mathbb{R}^p$ and   $h: \mathbb{R}^n\times \mathbb{R}^{p}\to \mathbb{R}^q$ are two known 
functions. Let
 $h^{-1}:  \mathbb{R}^n\times  \mathbb{R}^q\rightarrow 2^{\mathbb{R}^p}$     be a set-valued function that
 $h^{-1}(x,y)\triangleq \{z: h(x, z)=y\}$, then assume
 \begin{description}
\item[A1]
The noises $\{w_t\}$ and $\{v_t\}$ are two i.i.d sequences  satisfying:\\ 
(i)   $\{w_t\}$ is independent of $\{v_t\}$;\\
(ii) for each $t\geq 1$,   $(w_t,v_t)$  is independent of  $\chi_0$ and $ \{u_i\}_{0\leq i\leq t-1}$;\\
(iii)  $\|w_1\|\leq C_w$ and $\|v_1\|\leq C_v$ for  some $C_w>0$ and $ C_v\geq 0$. In addition,
   \begin{equation}\label{WV>0}
   \inf\nolimits_{z\in \mathcal{W}\times \mathcal{V} } P( (w_1,v_1)\in    B(z, \delta))>0,\quad \forall \delta>0.
   \end{equation}
   where   $  \mathcal{W}\triangleq\overline {B(0,C_w)}\subset \mathbb{R}^{p}$ and $ \mathcal{V}\triangleq\overline {B(0,C_v)}\subset \mathbb{R}^{q}$.
\item[A2]  $f$ and $h$ are continuous;  $h^{-1}$ is  bounded-valued and
upper semicontinuous\footnote{ A set-valued function $\zeta:X\rightarrow 2^Y$  is bounded-valued if for $\forall x\in X$, $\zeta(x)$ is bounded. $\zeta$
is said to be upper semicontinuous if for any $x\in X$ with   $\zeta(x)\neq \emptyset$ and any neighborhood $U$ of $\zeta(x)$, there is a $d_x>0$ such that $\zeta(B(x,d_x))\subset U$.   }.
\end{description}

An important issue in system identification is to solve the  identifiability of system \dref{hmodel} in an
 experiment  $(\chi_0, \{u_t\})\in \mathcal{E}$, where $\mathcal{E}$ is the set of all admissible experiments defined by  
\begin{equation}\label{E}
 \mathcal{E}\triangleq \{(\chi_0, \{u_t\}_{t\geq 0}): \|u_t\|\leq C_u, t\geq 1 \}\quad\mbox{for}\quad C_u>0.
\end{equation}
This direction arises from numerous engineering applications where identification has to be performed in  control processes, especially with  feedbacks inherent  \cite{AW71}, \cite{FL99}, \cite{gevers86}, \cite{GBP74}, \cite{GLSlover77}, \cite{HS95}, \cite{lichen16}. Unlike  identification operating in open loop,  a prominent feature of closed-loop identification is that there is no design   level on data in parameter estimation, once a feedback law is chosen. In this paper, we assume that the experiment is designed  in advance for control purposes. Then, outputs $y_t$ will be produced by control system \dref{hmodel} automatically.  We aim to identify parameter $\theta$  in the running process of the control system.

 Historically, identification  of noise-free systems from input-output data has been well addressed. Literatures on this topic have also shed some light on the determining factor of identifiablity for disturbed control systems.   As stated by \cite{glover76},   parameter identification is  in nature a  procedure of distinguishing output trajectories of different parameters. From this viewpoint, the critical criterion, in some sense, on linear system structure was  deduced by \cite{glover76}. Nonlinear systems with noises absent were  treated therein as well.  Considering noises, however, different observations might be produced by  the same parameter. We thus introduce the definition of identifiability for  disturbed control systems as following.

\begin{definition}\label{thstru}
System \dref{hmodel} is  identifiable under  experiment  $(\chi_0, \{u_t\})\in \mathcal{E}$,   if there is an estimator   such that the unknown parameter  $\theta$ in $\Theta$   can be uniquely determined by  the data set    $Z^\infty\triangleq \{y_{t+1}, u_t \}_{ t\geq 0}$   with probability $1$.
\end{definition}

Examining output trajectories to check  identifiability    is not straightforward  in  most  circumstances. So, interesting move to  derive  some simple identifiability criteria. This is exactly  the first part of the paper, where it is argued in Section \ref{CLI}   that the excitation points of  control system \dref{hmodel} are crucial for identifiability. In fact,  given  any experiment in $\mathcal{E}$, the identifiability of   system \dref{hmodel} is ensured
if   the excitation point set is  sufficiently dense. A lower bound of the required   density 
is computed accordingly. On the other hand, if the density of the excitation points is smaller  than the lower bound, the  identification may possibly fail. Generally speaking, this structure condition   for identifiability   is  weaker than that for the noise-free case. This is because noises $\{w_t\}$ in the state equation  are advantageous
 in identification, as suggested by the results.

 Since the estimator studied for the proofs of Theorems \ref{idensta} and \ref{iden}  is only of theoretical interest,
 the second part of the paper is intended to introduce an implementable algorithm
for the sake of practicality. The proposed  estimator  is called the  grid searching (GS) estimator and has its origins  in the nonlinear least-squares (NLS) method, whose asymptotic behaviours  and  approximation algorithms  have been explored for decades  \cite{GM1978}, \cite{GLT},  \cite{Jabob}, \cite{lai}, \cite{Wu81}. 
By modifying the NLS method in Section \ref{ImA},
the GS estimator  is proved to be strong consistent for a basic class of disturbed control systems    under  some appropriate conditions. 
This estimator
 can also cope with the situation where the noise variances are unknown.


\section{Identifiability for Control Systems}\label{CLI} We shall establish some  identifiability criteria for system \dref{hmodel} on the basis of experiment data.
\subsection{Notations}
Throughout this paper, we consider the probability measure space $(\Omega, \mathcal{F}, P)$. The  notations and definitions  used in this section are introduced here.  Let          diam$(x, A)\triangleq \sup_{x'\in A} d(x,x')$, where $d(\cdot,\cdot)$ denotes the distance between two points.
Denote $ \varphi \triangleq (z_1,\ldots,z_m)$, $\psi \triangleq (z'_1,\ldots,z'_m)$ and  $\chi \triangleq (z''_1,\ldots,z''_m)$  with $z_j, z'_j\in \mathbb{R}^q, z''_j\in \mathbb{R}^p,  j\in [1,m]$. 
 Then, for $x\in \mathbb{R}^n,  w\in \mathbb{R}^p, v\in  \mathbb{R}^q$,  define
\begin{eqnarray}\label{barfh}
\left\{
\begin{array}{l}
\bar{h}(x,\chi,\psi)\triangleq(h(x,z''_1)+z_1',\ldots,h(x,z''_m)+z'_m)\\
\bar h^{-1}(x, \varphi,  \psi  )\triangleq  (h^{-1}(x,z_1-z'_1), \ldots, h^{-1}(x,z_m-z'_m))\\
 h'(x,u,\chi,w,v)\triangleq h(x, f(x,u,\chi)+ w) +v\\
\hat  h(x, u, \varphi,  \psi, w, v)\triangleq \bigcup_{\chi\in \bar h^{-1}(x, \varphi,  \psi  )}   h'(x,u,\chi,w,v)
\end{array}.
\right.
\end{eqnarray}
So, $
\bar h^{-1}$ and $\hat h$ are set-valued functions. 
Denote the images of  $\bar h$, $ h'$  and  $\hat{h}$  at fixed points  $(x,\chi)$, $(x,u, \chi)$
 and $(x,u,\varphi)$, respectively,
    by $\mbox{Im} (\bar{h}_{x,\chi})\triangleq \bar{h}(x,\chi,\mathcal{V}^m)$,
$$
\mbox{Im}( h'_{x, u,  \chi})\triangleq h'\left(x, u, \chi,  \mathcal{W}, \mathcal{V}\right )  \quad\mbox{and}\quad  \mbox{Im}(\hat h_{x, u, \varphi})\triangleq \hat  h\left(x,u,\varphi, \mathcal{V}^m,  \mathcal{W},\mathcal{V}\right ),$$
Now, let $k,l\in \mathbb{N}^+$.  View  set $Z\subset \mathbb{R}^{k}$  as a point  $\check{z}\in 2^{\mathbb{R}^k}$    and by a slight abuse of notation, we write $\check{z}=Z$.
Now, for function (respectively, set-valued function)  $\zeta: \mathbb{R}^{k} \rightarrow \mathbb{R}^l$ (respectively, $2^{\mathbb{R}^l}$), define $\mathcal{B}\zeta: 2^{\mathbb{R}^{k}} \rightarrow 2^{\mathbb{R}^l}$ by $ \mathcal{B}\zeta(\check{z})=\zeta\left( Z\right).$
Let $ \zeta_i$ be  two functions and $\check{z}_i=Z_i, i=1,2$.   We say
\begin{equation}\label{BneqB}
\mathcal{B}\zeta_1(\check{z}_1) \neq \mathcal{B}\zeta_2(\check{z}_2)\quad \mbox{if}\quad \zeta_1\left( Z_1\right) \cap \zeta_2\left( Z_2\right)=\emptyset.
\end{equation}
Given $\epsilon>0$, for any $z\in \mathbb{R}^k$,   denote $\check{z}_\epsilon= B(z, \epsilon)\in 2^{\mathbb{R}^k}$. 
  Then,
let $ \mathcal{\check{V}}^m_\epsilon\triangleq \bigcup_{\psi \in \mathcal{V}^m }\check{\psi}_\epsilon$ and 
$\check{\Pi}_\epsilon\triangleq\bigcup_{\pi\in \mathcal{W}\times \mathcal{V}} \check{\pi}_\epsilon$.
Define the images of $\mathcal{B}\bar h$,    $\mathcal{B} h'$ and   $\mathcal{B}\hat h$     at points  $(\check{x},\check{\chi})\in 2^{\mathbb{R}^n}\times 2^{\mathbb{R}^{pm}}$,  $(\check{x},\check{u},\check{\chi})\in 2^{\mathbb{R}^n}\times 2^{\mathbb{R}^{r}}\times 2^{\mathbb{R}^{pm}}$
 and   $(\check{x},  \check{u},  \check{\varphi})\in 2^{\mathbb{R}^n}\times 2^{\mathbb{R}^r}\times2^{\mathbb{R}^{qm}}$  by
\begin{eqnarray}\label{Bhhh}
\left\{
\begin{array}{l}
\mbox{Im}(\mathcal{B}\bar h)^\epsilon_{\check{x},\check{\chi}  }\triangleq \mathcal{B}\bar  h\left(\check{x},\check{\chi}, \mathcal{\check{V}}^m_\epsilon\right ),\quad
\mbox{Im}(\mathcal{B} h')^\epsilon_{\check{x},\check{u},\check{\chi}}\triangleq \mathcal{B} h'\left(\check{x},\check{u},\check{\chi},  \check{\Pi}_\epsilon\right )\\
\mbox{Im}(\mathcal{B}\hat h)^\epsilon_{\check{x},  \check{u}, \check{\varphi}  }\triangleq \mathcal{B}\hat  h\left(\check{x},\check{u},\check{\varphi}, \mathcal{\check{V}}^m_\epsilon, \check{\Pi}_\epsilon\right )
\end{array}.
\right.
\end{eqnarray}


\subsection{Motivations and Excitation Points}
Let us first look at a simple  system
\begin{equation}\label{model}
y_{t}=f(\theta,\varphi_{t-1})+f'(u_{t-1},\varphi_{t-1})+w_{t},\quad t\geq 1,
\end{equation}
 where $\varphi_t\triangleq (y_t, \ldots, y_{t-m+1})$  is an observable $pm\times 1$ vector. 
The experiment thus becomes $(\varphi_0,\{u_t\})$ in $\mathcal{E}$
   and    
 Assumptions A1--A2 degenerate to
\begin{description}
\item[A1']
$\{w_t\} $ is an  i.i.d sequence satisfying\\
(i) for each $t\geq 1$,   $w_t$  is independent of  $\chi_0 $ and $\{u_i\}_{0\leq i\leq t-1}$;\\
(ii)  $\|w_1\|\leq C_w$  for  some finite $C_w>0$ and
  \begin{equation}\label{w1Cw}
   \inf_{z\in \mathcal{W} 
   } P( w_1\in    B(z, \delta))>0,\quad \forall \delta>0.
   \end{equation}
   \item[A2'] $f$ and $f'$ are continuous.
\end{description}

The most familiar experiments  are the ones that  casue $\|\varphi_t\|\leq C$,  $\forall t\geq 1$  almost surely for some $C>0$. 
Apparently, if $\|\varphi_t\|\leq C$, by   \dref{hmodel}, \dref{E} and Assumption A2', it is easy to compute a $ C_0>0$ that
  \begin{equation}\label{ff'<}
f_i\triangleq\|f(\theta,\varphi_{i})+f'(u_{i},\varphi_{i})\|\leq  C_0,\quad i=t,\ldots,t+m-1,
   \end{equation}
and hence
 $(f_{t},\ldots,f_{t+m-1})\in\mathcal{S}\triangleq \prod_{i=1}^m \overline {B(0,C_0)}\subset \mathbb{R}^{pm}$. So, the following result is not suprising.
\begin{theorem}\label{idenf}
Under Assumptions A1'--A2', let  $ (\varphi_0, \{u_t\})\in \mathcal{E}$ be an experiment such that $P\{\|\varphi_t\|\leq C, i.o.\}=1$ for some $C>0$.
Then, control
system \dref{model}  is identifiable  if  for each pair $x,x'\in \Theta$ with $ x\neq x'$, there are  sufficiently dense points $\b \in \mathcal{ S}$ such that
 $ f(x,\b)\neq f(x',\b)$.
\end{theorem}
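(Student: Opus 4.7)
The plan is to exhibit a \emph{support-feasibility} estimator. Since Assumption A1' makes the conditional support of $y_t$ given the past equal to $f(\theta,\varphi_{t-1}) + f'(u_{t-1},\varphi_{t-1}) + \mathcal{W}$, we set
\[
\hat\Theta_N \triangleq \bigl\{x\in\Theta: \|y_t - f(x,\varphi_{t-1}) - f'(u_{t-1},\varphi_{t-1})\|\leq C_w,\; 1\leq t\leq N\bigr\},
\]
and take as the estimator any measurable selector from $\hat\Theta_\infty \triangleq \bigcap_{N\geq 1} \hat\Theta_N$. The residual at $\theta$ is exactly $w_t\in\mathcal{W}$, so $\theta\in\hat\Theta_\infty$ automatically; the real work is to show that $\hat\Theta_\infty\subset\{\theta\}$ almost surely.

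Fix $x\in\Theta$ with $x\neq\theta$. The density hypothesis supplies a point $\beta^*\in\mathcal{S}$ with $f(\theta,\beta^*)\neq f(x,\beta^*)$; continuity of $f$ then yields $\eta_x,\alpha_x>0$ with $\|f(\theta,\beta)-f(x,\beta)\|\geq\alpha_x$ on the ball $B(\beta^*,\eta_x)$. Whenever $\varphi_{t-1}\in B(\beta^*,\eta_x)$, the residual at the candidate $x$ equals $w_t + a_t$ with $\|a_t\|\geq\alpha_x$; choosing $w_t$ in a small ball centered at $C_w a_t/\|a_t\|$ on the boundary of $\mathcal{W}$ forces $\|w_t+a_t\|>C_w$, and A1'(ii) guarantees that this event has conditional probability at least some constant $p_x>0$ independent of the past. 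So each visit of $\varphi_{t-1}$ to $B(\beta^*,\eta_x)$ provides a uniform chance of ejecting $x$ from $\hat\Theta_\infty$, and L\'evy's conditional Borel--Cantelli yields $x\notin\hat\Theta_\infty$ a.s.\ as soon as such visits happen infinitely often.

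The recurrence of $\varphi_{t-1}$ in $B(\beta^*,\eta_x)$ is the core step. The idea is to restart from an epoch $s$ with $\|\varphi_s\|\leq C$ (granted i.o.\ by hypothesis) and to chain $m$ consecutive noise shocks: each coordinate $y_{s+j}$ is a bounded, $\mathcal{F}_{s+j-1}$-measurable drift plus $w_{s+j}$, and A1'(ii) supplies a positive lower bound on the conditional probability that the $m$-tuple $\varphi_{s+m}=(y_{s+m},\ldots,y_{s+1})$ lands in any prescribed target ball inside the reachable $m$-tuple shell --- and $B(\beta^*,\eta_x)\subset\mathcal{S}$ is reachable because each coordinate is within $C_w$ of an admissible drift. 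Summing these uniformly lower-bounded probabilities over the disjoint length-$m$ blocks seeded by the i.o.\ bounded-regressor epochs, and invoking conditional Borel--Cantelli a second time, produces the required recurrence.

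Finally, to pass from a single $x$ to all of $\Theta\setminus\{\theta\}$, continuity of $f$ preserves the bias bound $\alpha_x/2$ on an open $\Theta$-neighborhood of $x$; a finite cover of the compact set $\Theta\setminus B(\theta,\rho)$ by such neighborhoods eliminates all of it off a single null set, and letting $\rho=1/k\downarrow 0$ delivers $\hat\Theta_\infty=\{\theta\}$ a.s. The main obstacle I expect is the controllability/recurrence step: establishing a uniform positive lower bound on the conditional probability of the $m$-step excursion of $\varphi_t$ into a prescribed ball inside $\mathcal{S}$, and matching that bound against the non-Markov structure of $\varphi_t$ so that conditional Borel--Cantelli actually fires across the i.o.\ bounded-regressor epochs.
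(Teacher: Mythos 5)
Your overall architecture --- a feasibility/elimination estimator, ejection of a false parameter by a boundary noise realization aligned with the bias $f(x,\varphi)-f(\theta,\varphi)$, recurrence of the regressor into excitation neighbourhoods via an $m$-step chaining of conditional probabilities seeded at the bounded epochs, and a final compactness argument over $x$ --- is exactly the skeleton of the paper's proof, which routes Theorem \ref{idenf} through Theorem \ref{idensta} and Lemmas \ref{CoverA}--\ref{inputsta}. The ejection step and the two applications of the conditional Borel--Cantelli lemma are sound as you describe them.

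The genuine gap is in the recurrence step. You fix, for each $x\neq\theta$, a \emph{single} excitation point $\beta^*$ and claim that $\varphi_t$ visits $B(\beta^*,\eta_x)$ infinitely often, asserting reachability ``because each coordinate is within $C_w$ of an admissible drift.'' But the drift is not at your disposal: at a seeding epoch $s$ with $\|\varphi_s\|\le C$, the realized drift vector $(f_{s+m-1},\dots,f_s)$ is some point of $\mathcal{S}=\prod_{j=1}^m\overline{B(0,C_0)}$ determined by the trajectory, and $y_{s+j}=f_{s+j-1}+w_{s+j}$ can only reach points within $C_w$ of $f_{s+j-1}$. Once $C_0>C_w$ (the generic case), a fixed $\beta^*$ may be unreachable from the realized drift in some coordinate, the corresponding one-step transition probability is zero, and your second Borel--Cantelli application never fires. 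This is precisely why the hypothesis is a quantitative density ($\underline d^m(\mathcal{P}_\alpha|\mathcal{S})>1/C_w$, cf.\ Theorem \ref{idensta}) rather than mere nonemptiness of $\{\beta: f(x,\beta)\neq f(\theta,\beta)\}$, and why Remark \ref{tight} shows the threshold $1/C_w$ cannot be relaxed. The repair needs two ingredients you omit: (i) extract from the density hypothesis a \emph{product} set $\prod_{j=1}^m E_j\subset\mathcal{P}_\alpha$ in which each $E_j$ is a $(C_w-\sigma)$-net of $\overline{B(0,C_0)}$; and (ii) select the target excitation point \emph{adaptively, coordinate by coordinate along the excursion} --- the index chosen for coordinate $j$ must be $\mathcal{F}_{t-j}$-measurable, because the drift at that step depends on the noises already realized inside the excursion --- with the product structure guaranteeing that the randomly assembled tuple is still an excitation point. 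This is exactly the content of Lemmas \ref{stiFt} and \ref{inputsta}; without it, the fixed-target recurrence claim on which your proof rests is false in general.
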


This theorem is a direct consequence of Theorem \ref{idensta} appearing in a later section.
The observation of the above theorem  
enlightens us to  introduce   set
\begin{equation}\label{macalP}
\mathcal{P}_\a\triangleq\{\eta\in \mathbb{R}^{pm}: \exists \b\in \mbox{Im} (\bar{h}_{x,\eta} ) \mbox{  s.t.  Im}(   h'_{x,u,  \eta}  ) \nsubseteq \mbox{Im}(\hat h_{x', u,  \b})\},
\end{equation}
where $u$ is restricted to $\overline{B(0,C_u)  }$ and
\begin{equation}\label{alpha}
\a \in A_0 \triangleq\{(x,x')\in \Theta\times \Theta: x\neq x'\}.
\end{equation}
 We call $\eta\in \mathcal{P}_\a$  an \emph{excitation point} of $\a\in A_0$ for system \dref{hmodel}.  If a system $(f,h)$ has  sufficiently  dense  excitation points of $\a$, then  states $\chi_t$ are very likely to fall in $ \mathcal{P}_\a$. This means it is  relatively easy to   distinguish $x$ and $x'$.
 \begin{example}
Consider  system \dref{model},
in which case $\eta=\b$ and
$$
 \mbox{Im}(   h'_{x,u, \eta}  ) \nsubseteq \mbox{Im}(\hat h_{x', u, \b})\quad \Leftrightarrow \quad f(x,\b)\neq f(x',\b).
$$
Heuristically,   $\mathcal{P}_\a=\{\b: f(x,\b)\neq f(x',\b)\}$ is composed of the points where different parameters give rise to different values of $f$.
\end{example}

Theorem \ref{idenf} suggests that the identifiability of a control system depends on   the density of $P_\a,\a\in A_0$.
More precisely, for two sets $Z, Z'\in \mathbb{R}^l, l\geq 1$, we define the \emph{lower density} of $Z'$ in $Z$  by
\begin{equation*}\label{density}
\underline d(Z'|Z)=\dfrac{1}{\sup_{z\in Z}\inf\{d>0: B(z,d)\cap Z'\neq \emptyset           \} }.
\end{equation*}
Further, when $Z=\prod_{j=1}^m Z_j, Z_j\subset \mathbb{R}^{l}, l\geq 1$ and $Z'\subset \mathbb{R}^{lm}$, the \emph{$m$-symmetric  lower density} of $Z'$  in $Z$ is defined by
 $$
 \underline d^m(Z'|Z)\triangleq \sup_{\prod\nolimits_{j=1}^m E_j\subset Z', E_j\in \mathbb{R}^l}(\min\nolimits_{j\in [1,m] } \underline d(E_j|Z_j)).
 $$
 Clearly, $\underline d^1(Z'|Z)=\underline d(Z'|Z)$.  To identify parameter $\theta$, the density of  $\mathcal{P}_\a$ for  control  system \dref{hmodel} 
  is deduced in the next subsection.

\subsection{ Identifiability Criteria}
The criteria are presented in two cases.

\subsubsection{Criterion for $C$-Recurrence}\label{Crecurrency}
System states  are usually  constrained in a bounded area
 in practice. It is a special case of $C$-recurrence defined below:
\begin{definition}
 An  experiment   $(\chi_0, \{u_t\})\in \mathcal{E}$ is said to be $C$-recurrent for some  $C>0$, if    the corresponding states  
  satisfy
$P\{\|\chi_t\|\leq C, i.o.\}=1.$ \\
\end{definition}
The main result of this section is stated as follows.
\begin{theorem}\label{idensta}
Under Assumptions A1--A2,    control  system \dref{hmodel}
 is identifiable  for any $C$-recurrent experiment  $(\chi_0, \{u_t\})\in \mathcal{E}$  if    $\underline d^m (\mathcal{P}_\a| \mathcal{S})> 1/C_w$ for each $\a\in A_0$.
\end{theorem}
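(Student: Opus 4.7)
The plan is to use the set-valued estimator $\hat{\Theta}(\omega) \triangleq \{\theta \in \Theta : Z^\infty(\omega) \text{ is compatible with } \theta\}$ and to show that it shrinks to the true parameter almost surely. By separability of $\Theta\times\Theta$ and the continuity/upper-semicontinuity hypotheses in A2, this reduces to proving, for each fixed pair $\alpha=(x,x')\in A_0$, that data generated under the true parameter $x$ are incompatible with $x'$ with probability one.

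Fix $\alpha=(x,x')$. From $\underline{d}^m(\mathcal{P}_\alpha|\mathcal{S}) > 1/C_w$ I extract some $\epsilon < C_w$ together with a product set $E = \prod_{j=1}^m E_j \subset \mathcal{P}_\alpha$ in which each $E_j$ is $\epsilon$-dense in $\overline{B(0,C_0)}$. For every $\eta=(z''_1,\ldots,z''_m)\in E$ the definition of $\mathcal{P}_\alpha$ furnishes a $\beta\in\mathrm{Im}(\bar h_{x,\eta})$ and an observation value $y^\star\in\mathrm{Im}(h'_{x,u,\eta})\setminus\mathrm{Im}(\hat h_{x',u,\beta})$. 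Combining continuity of $f,h$ with the upper semicontinuity of $h^{-1}$ (A2), I would promote this pointwise non-containment to an \emph{open} separation: there exist $\delta_1,\delta_2>0$ such that $B(y^\star,\delta_1)$ remains disjoint from the $x'$-image even after all arguments are $\delta_2$-inflated and the noise supports are replaced by $\check{\mathcal V}^m_{\delta_2}$ and $\check{\Pi}_{\delta_2}$; this robustness is exactly the content encoded by the operators in \eqref{Bhhh}.

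Next I convert the density of excitation into an almost-sure event. Conditioned on $\|\chi_{t-1}\|\le C$, the positivity \eqref{WV>0} of the noise on every $\delta$-ball, combined with $\epsilon<C_w$ and the $\epsilon$-denseness of each $E_j$, implies that $x_t = f(\theta,u_{t-1},\chi_{t-1}) + w_t$ can be steered into any prescribed small ball around some $z''_j\in E_j$ with conditional probability bounded below by a positive constant, uniformly in $\chi_{t-1}\in\overline{B(0,C)}$ and $u_{t-1}\in\overline{B(0,C_u)}$. Iterating $m$ such steps assembles $\chi_{t+m-1}$ inside $B(\eta,\delta_2)\subset\mathcal{P}_\alpha$ with conditional probability at least $p_0^m>0$. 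The $C$-recurrence hypothesis supplies infinitely many indices with $\|\chi_t\|\le C$, while A1(i)--(ii) together with the i.i.d.\ property of $\{w_t\}$ make the successive steering attempts conditionally independent; a conditional Borel--Cantelli argument then yields, almost surely, infinitely many times at which both $\chi_{t+m-1}\in B(\eta,\delta_2)$ and $y_{t+m}\in B(y^\star,\delta_1)$ hold. By the open separation above, the latter observation event is impossible under $\theta=x'$, so $x'$ is a.s.\ refuted by the data.

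I expect the main obstacle to lie in the second step, namely propagating the abstract non-containment $\mathrm{Im}(h'_{x,u,\eta})\nsubseteq\mathrm{Im}(\hat h_{x',u,\beta})$ to the $\check{\mathcal V}^m_\epsilon$- and $\check{\Pi}_\epsilon$-inflated versions used by the $\mathcal B$-operator formalism in \eqref{Bhhh}. Since $\hat h$ is built as a union over the set-valued, only upper-semicontinuous $\bar h^{-1}$, care is required to bound the growth of the $x'$-image under small perturbations of $\beta$ and the noise arguments, so that the separating ball around $y^\star$ survives. The compactness of $\mathcal V,\mathcal W$, the boundedness of $h^{-1}$, and the continuity of $f,h$ in A2 all enter at this point; once the robust open separation is established, the Borel--Cantelli/union-bound machinery outlined above completes the proof and extends the conclusion over all of $A_0$ to yield identifiability.
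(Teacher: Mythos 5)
Your architecture coincides with the paper's: both arguments rest on (a) an open separation extracted from membership in $\mathcal{P}_\alpha$ via Assumption A2 and compactness, (b) steering the state into a finite, dense excitation set over $m$ consecutive steps using \eqref{WV>0} together with $\underline d^m(\mathcal{P}_\alpha|\mathcal{S})>1/C_w$, and (c) a conditional Borel--Cantelli argument fed by $C$-recurrence. The packaging of the uniformization over pairs differs --- the paper stratifies $A_0$ into compact sets $A_k=\{\|x-x'\|\geq c_k\}$, takes the finite coverings $\{N_{i,k}\times N'_{i,k}\}$ of Lemma \ref{CoverA}, and runs an explicit elimination algorithm yielding a convergent point estimator $\hat\theta_t$, whereas you use a set-valued compatibility estimator and, implicitly, a countable subcover of refuted neighborhoods; since your separation is open in $x'$, that reduction is legitimate and the difference is largely cosmetic.

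Two steps, however, have concrete holes. First, your terminal event ``$\chi_{t+m-1}\in B(\eta,\delta_2)$ and $y_{t+m}\in B(y^\star,\delta_1)$'' does not by itself refute $x'$. An estimator only sees the outputs, so the refutation test must be $y_{t+m}\notin \mbox{Im}(\hat h_{x',\,u_{t+m-1},\,\varphi_{t+m-1}})$ with $\varphi_{t+m-1}$ the \emph{observed} regressor, while your separation is only against $\mbox{Im}(\hat h_{x',u,\beta})$ for the single $\beta\in\mbox{Im}(\bar h_{x,\eta})$ furnished by \eqref{macalP}. That $\beta$ equals $\bar h(x,\eta,\psi^{\ast})$ for one specific $\psi^{\ast}\in\mathcal{V}^m$, so you must additionally force the realized observation noises $(v_{t+m-1},\dots,v_{t})$ into $B(\psi^{\ast},\delta_2)$ so that $\varphi_{t+m-1}=\bar h(\theta,\chi_{t+m-1},\psi_{t+m-1})$ lands in the inflated neighborhood of $\beta$ where the separation holds; this is exactly the factor $B(\psi_{is},d_k)$ in the paper's excitation event \eqref{Dkis}, and it is the precise point at which $\{v_t\}$ works against identification (as the paper remarks after Remark \ref{tight}). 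The extra event only costs another uniformly positive conditional-probability factor, so Borel--Cantelli survives, but without it the assertion ``impossible under $\theta=x'$'' does not follow. Second, the witnesses $(w^{\ast},v^{\ast})$ and hence $y^\star$ in \eqref{macalP} depend on $u$, and $u_{t+m-1}$ is a feedback quantity fixed only at run time; to obtain a conditional probability bound uniform over $u\in\overline{B(0,C_u)}$ you need to partition the input ball into finitely many pieces on each of which one witness works (the sets $\mathcal{U}^{is}_l$ of Lemma \ref{CoverA}). As written, your claim of uniformity in $u$ for the observation step is unsupported.
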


\begin{remark}\label{tight}
General speaking, the lower bound $1/C_w$ in Theorem \ref{idensta}  cannot be further relaxed.
For  example, 
consider system \dref{model} with
 $[1,2]\subset \Theta\subset \mathbb{R}$, 
 $m=2, p=1$. Assume  $f'(u,y_1,y_2)=uy_2$ and
\begin{eqnarray*}
 f(x,y_1,y_2)=\left\{
\begin{array}{ll}
0, & y_1\in [-C_w,C_w]\\
x(y_1-C_w), & y_1>C_w\\
x(y_1+C_w), & y_1<-C_w
\end{array}, \quad x\in [1,2].
\right.
\end{eqnarray*}
It is evident that $\underline d^2 (\mathcal{P}_{(1,2)}| \mathcal{S})= 1/C_w$. Moreover,
 $\theta$ cannot be identified in experiment
 $((0,0), \{0\})$, 
which is    $C$-recurrent for any given $C>0$.
\end{remark}



\begin{remark}
To some extent,  noises   $\{w_t\}$ in the state equation are advantageous
 in the closed-loop identification, whereas $\{v_t\}$ in the observation equation   play an opposite role. This observation becomes clear during the proof of Theorem \ref{idensta}.
\end{remark}



\subsubsection{Criterion for  General Case}
Generally, given an $\a \in A_0$,
the excitation points
 of $\a$  are expected in the following set for some $\epsilon>0$: 
\begin{eqnarray}\label{uthege}
\mathcal{P}_\a(\epsilon)\triangleq\{\eta\in \mathbb{R}^{pm}:  \exists \check{\b}\in \mbox{Im}(\mathcal{B}\bar h)^\epsilon_{\check{x}_\epsilon,\check{\eta}_\epsilon  }   \mbox{ s.t.}&& \mbox{ Im}(\mathcal{B}  h')^\epsilon_{\check{x}_\epsilon,\check{u}_\epsilon, \check{\eta}_\epsilon} \nsubseteq  \mbox{Im}(\mathcal{B}\hat h)^\epsilon_{\check{x}'_\epsilon, \check{u}_\epsilon, \check{\b}}\},\quad
\end{eqnarray}
where $u$ is only need to be considered in $\overline{B(0,C_u)  }$.

\begin{theorem}\label{iden}
Under Assumptions A1--A2,  control    system \dref{hmodel}
 is identifiable  for any  experiment  $(\chi_0, \{u_t\})\in \mathcal{E}$  if for each $\a\in A_0$,  there exists some $\epsilon>0$ such that    $\underline d^m ( \mathcal{P}_\a(\epsilon )|\mathbb{R}^{pm})> 1/C_w$.
\end{theorem}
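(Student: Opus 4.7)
The plan is to mirror the proof of Theorem~\ref{idensta} step for step, with the $\epsilon$-thickened objects in \dref{Bhhh} playing the role that $C$-recurrence played there. The overall architecture is the same: for each $\alpha=(x,x')\in A_0$, produce almost surely an infinite sequence of ``excitation times'' at which one fresh observation rules out the hypothesis $\theta=x'$; then patch these pointwise exclusions into a globally consistent estimator by a continuity-and-compactness argument on $\Theta$.

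\textbf{Step 1: driving the state into the thickened excitation set.} Fix $\alpha$ and the $\epsilon=\epsilon(\alpha)>0$ provided by the hypothesis. Unfolding $\underline d^m(\mathcal{P}_\alpha(\epsilon)\,|\,\mathbb{R}^{pm})>1/C_w$ yields sets $E_1,\dots,E_m\subset\mathbb{R}^p$ with $\prod_j E_j\subset\mathcal{P}_\alpha(\epsilon)$ and a single constant $r:=\max_j\sup_{z\in\mathbb{R}^p}d(z,E_j)<C_w$. Hence, for any state $\chi$ and admissible control $u$, there is $e_j\in E_j$ with $\|e_j-f(\theta,u,\chi)\|\le r$, so the ``aiming'' vector $a:=e_j-f(\theta,u,\chi)$ lies in $\mathcal{W}$. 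Using A1(iii) one obtains $\rho:=\inf_{z\in\mathcal{W}}P(w_1\in B(z,\epsilon))>0$, and cascading this over $m$ consecutive instants via A1(ii) gives a conditional lower bound $P(\chi_{t+m}\in\prod_j B(E_j,\epsilon)\mid\mathcal{F}_t)\ge\rho^m$. A block-wise conditional second Borel--Cantelli argument then yields, almost surely, an infinite sequence of times $\tau_1(\alpha)<\tau_2(\alpha)<\cdots$ at which $\chi_{\tau_k}$ lies within $\epsilon$ of some $\eta^*\in\prod_j E_j\subset\mathcal{P}_\alpha(\epsilon)$.

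\textbf{Step 2: eliminating $x'$ from the next observation.} At each $\tau_k$, the defining property of $\mathcal{P}_\alpha(\epsilon)$ supplies $\check\beta\in\mbox{Im}(\mathcal{B}\bar h)^\epsilon_{\check x_\epsilon,\check\eta^*_\epsilon}$ and an open set $O_{\tau_k}\subset\mbox{Im}(\mathcal{B}h')^\epsilon_{\check x_\epsilon,\check u_\epsilon,\check\eta^*_\epsilon}\setminus\mbox{Im}(\mathcal{B}\hat h)^\epsilon_{\check x'_\epsilon,\check u_\epsilon,\check\beta}$. Because $\chi_{\tau_k}\in B(\eta^*,\epsilon)$ and the data are generated by the true parameter $x$, A1 together with continuity of $f,h$ gives $P(y_{\tau_k+1}\in O_{\tau_k}\mid\mathcal{F}_{\tau_k})\ge\rho'>0$ uniformly in $k$. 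One more conditional Borel--Cantelli step therefore produces infinitely many ``certificates'' $\{y_{\tau_k+1}\in O_{\tau_k}\}$ a.s., each excluding $\theta=x'$. Declaring the estimator at time $t$ to be any element of the shrinking consistency set $\Theta_t\subset\Theta$ then gives strong consistency at the level of the pair $\alpha$.

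\textbf{Step 3: uniformization over $\Theta$ and the main obstacle.} Since $A_0$ is uncountable, the above pointwise scheme must be promoted to a uniform one. The upper semicontinuity of $h^{-1}$ and the continuity of $f,h$ (Assumption A2), applied to the $\epsilon$-fattened constructions in \dref{Bhhh}, show that the non-inclusion witnessing membership in $\mathcal{P}_\alpha(\epsilon)$ is stable under small joint perturbations of $(x,x',u,\eta)$; so each certificate event of Step~2 persists on an open neighborhood of $\alpha$ in $A_0$. For every $\delta>0$, the compact set $\{x'\in\Theta:\|x'-\theta\|\ge\delta\}$ admits a finite subcover by such neighborhoods, and finitely many excitation sequences are then enough to expel the whole ring $\Theta\setminus B(\theta,\delta)$ from $\Theta_t$; letting $\delta\downarrow 0$ drives $\Theta_t$ to $\{\theta\}$ almost surely. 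The main technical obstacle is precisely this uniformization: one must show that the ``gap'' $O_{\tau_k}$ at a given $\alpha$ does not collapse as $\alpha$ is perturbed, without having to take $\epsilon$ so small that the noise lower bound $\rho$ of Step~1 is destroyed. The flexibility afforded by the existential quantifier on $\epsilon$ in the hypothesis, combined with the set-valued continuity built into \dref{Bhhh}, is what makes this balance possible.
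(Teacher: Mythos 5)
Your proposal follows the same architecture as the paper's proof: the paper reduces Theorem \ref{iden} to the machinery of Theorem \ref{idensta} by observing that the $\epsilon$-thickened separation in \dref{uthege} lets Lemma \ref{CoverA} go through with $\mathcal{P}_\a$ replaced by $\mathcal{P}_\a(\epsilon)$, $\mathcal{S}=\mathbb{R}^{pm}$ ($C_0=\infty$, so the excitation set $\Delta_\a$ is countably infinite rather than finite), and any experiment treated as $C$-recurrent with $C=\infty$; Lemmas \ref{stiFt}--\ref{inputsta} and \ref{conthe} then finish the job. Your Steps 1 and 3 match this: the aiming argument with $r<C_w$ and the conditional Borel--Cantelli block argument is Lemma \ref{inputsta}, and the finite-subcover uniformization is the covering of $A_k$ in Lemma \ref{CoverA} (though note that for this theorem the openness of the neighborhoods is already supplied by the $\epsilon$-balls in \dref{Bhhh}, so the ``main obstacle'' you identify is largely dissolved by the hypothesis itself).

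There is, however, a genuine gap in your Step 2. You claim $P(y_{\tau_k+1}\in O_{\tau_k}\mid\mathcal{F}_{\tau_k})\ge\rho'>0$ uniformly in $k$, where $O_{\tau_k}$ is the gap supplied by the defining property of $\mathcal{P}_\a(\epsilon)$. But that property only asserts the \emph{existence} of some $\check\b\in\mbox{Im}(\mathcal{B}\bar h)^\epsilon_{\check{x}_\epsilon,\check{\eta}^*_\epsilon}$ for which the non-inclusion holds, i.e.\ existence of a particular observation-noise vector $\psi^*\in\mathcal{V}^m$ making the realized output regressor $\varphi_{\tau_k}=\bar h(\theta,\chi_{\tau_k},\psi_{\tau_k})$ land near the special $\b$. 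The noises $v_{\tau_k},\ldots,v_{\tau_k-m+1}$ are already $\mathcal{F}_{\tau_k}$-measurable, so if the realized $\psi_{\tau_k}$ is far from $\psi^*$ the gap need not exist at all and your conditional probability can be zero. Your Step 1 event only constrains the state $\chi_{\tau_k}$, not $\psi_{\tau_k}$. The fix — which is exactly what the paper's sets $\Gamma^k_i$ and $D^k_{is,l}$ in \dref{Dkis} implement — is to fold the requirements $\psi_{\tau_k}\in B(\psi^*,d)$ and $(w_{\tau_k+1},v_{\tau_k+1})\in B((w^*,v^*),d)$ (with $(w^*,v^*)$ chosen per cell $\mathcal{U}^{is}_l$ of a partition of $\overline{B(0,C_u)}$, to handle the history-dependence of $u_{\tau_k}$) into a single excitation event, and run one Borel--Cantelli--L\'evy argument on that joint event, whose conditional probability is bounded below by a product $\rho_{k,1}\rho_{k,2}\rho^m_{k,3}$ as in \dref{conditionalP}. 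As written, your two-stage Borel--Cantelli decomposition does not close.
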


We have thus far solved the  identifiability issue. Later,
an implementable algorithm will be provided  in Section \ref{ImA} with the convergence rates explicitly computed.

\subsection{Proofs of Theorems \ref{idensta} and \ref{iden}}

The proofs of Theorems \ref{idensta} and \ref{iden} are similar, so we only give the detailed proof of Theorem \ref{idensta}.

\subsubsection{Theoretical Nonlinear Estimator}\label{nonest}
To design   an  estimator  competent for the identification task,  we need a simple result on functions $f$ and $h$. For this, let
$
\{\Theta_k\subset \mathbb{R}^n, k\geq 0\}
$ be a series of sets  with  $\Theta_0\triangleq  \Theta$ and $\Theta_k\subset \Theta_{k-1}$.
Define 
  $$A_k \triangleq  \left\{(x,x')\in \Theta_{k-1}\times\Theta_{k-1}:\|x-x'\|\geq c_k \right \},\quad k\geq 1,$$
  where $c_k\leq \frac{1}{k}$ is properly small such that $(\{x\} \times \Theta_{k-1}) \cap A_k \neq \emptyset$ for all $x\in \Theta_{k-1}$.

\begin{lemma}\label{CoverA}
Let  Assumption A2 hold and $\underline d^m(\mathcal{P}_\a| \mathcal{S} )> 1/C_w,  \forall \a\in A_0$.  Then, for each  $k\geq 1$,  a  finite covering  of $A_k$ in the form
$\{N_{i,k}\times N'_{i,k}\}_{ n_{k-1}\leq i< n_k}, n_0=1$
exists. Moreover,  for every  $ i\in [n_{k-1}, n_k)$, the following two statements hold: \\
(i)   $ N_{i,k}\cap  N_{j,k}=\emptyset, N'_{i,k}\cap  N'_{j,k}=\emptyset$ for $ j\neq i$ and $(N_{i,k}\times N'_{i,k} )\cap A_k \neq \emptyset$;\\
(ii) there exists a finite  set of points  $\Delta^k_i=\prod_{j=1}^m E^{i k}_j  \subset \mathcal{P}_\a$ for some $\a\in A_0$    with
\begin{equation}\label{dEB}
\underline d (E^{i k}_j |\overline {B(0,C_0)})> 1/(C_w-\sigma_k), \sigma_k\in (0,C_w) \quad  \mbox{and} \quad |\Delta^k_i|= \hat{n}_{k},
 \end{equation}
 a sequence
$\{\psi_{is}\in \mathcal{V}^m,      (w^*_{i s,l},  v^*_{i s,l})\in \mathcal{W}\times \mathcal{V}, \mathcal{U}^{i s}_l \subset \mathbb{R}^r \}_{ 1\leq s\leq \hat{n}_{k}, 1\leq l\leq \hat{n}_{is} }$ with  $\{\mathcal{U}^{i s}_l\}$ being some  mutually disjoint sets  that   $\overline {B(0,C_u)}=\sum_{l=1}^{\hat n_{is}} \mathcal{U}^{i s}_l$  and a number  $d_k\in (0,\sqrt{m}\sigma_k)$, where $\hat{n}_{k}$, $\sigma_k$, $d_k $ depend only on $k$ and $\hat{n}_{is} $ depends on $i, s$,   such that for  every 
$s\in [1,\hat{n}_k]$,  if  $(x,\chi, \psi, w, v,u)\in N_{i,k} \times B( \eta_{is}, d_{k})\times B(\psi_{is},d_{k}) \times B( (w^*_{is,l},v^*_{is,l}), d_{k})\times \mathcal{U}^{i s}_l$, 
$l\in [1, \hat{n}_{is} ]$, then
\begin{eqnarray}\label{uthe}
  h'(x,u, \chi, w,v)\notin (\bigcup\nolimits_{z \in  N'_{i,k}}\mbox{Im}(\hat h_{z, u, \varphi}))\quad \mbox{with}\quad \varphi=\bar h(x,\chi,\psi).
\end{eqnarray}
\end{lemma}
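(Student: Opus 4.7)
The statement packages the density hypothesis into a finite family of data boxes on which the non-inclusion \dref{uthe} holds robustly. The argument has three layers: a pointwise use of the definition \dref{macalP} of $\mathcal{P}_\a$, a quantitative continuity step that upgrades a single non-inclusion into a tube of non-inclusions, and a compactness reduction on $A_k\subset \Theta\times\Theta$.

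First, fix $k\geq 1$ and an arbitrary $\a_0=(x_0,x_0')\in A_k$. Strict inequality in $\underline d^m(\mathcal P_{\a_0}|\mathcal S)>1/C_w$ yields some $\sigma_{\a_0}\in(0,C_w)$ with $\underline d^m(\mathcal P_{\a_0}|\mathcal S)>1/(C_w-\sigma_{\a_0})$, hence a finite product $\Delta_{\a_0}=\prod_{j=1}^m E_j\subset \mathcal P_{\a_0}$ whose factors are $1/(C_w-\sigma_{\a_0})$-dense in $\overline{B(0,C_0)}$. Enumerate $\Delta_{\a_0}=\{\eta_s\}$. Definition \dref{macalP} supplies, for each $\eta_s$, witnesses $u_s\in\overline{B(0,C_u)}$, $\psi_s\in \mathcal V^m$ (so $\b_s:=\bar h(x_0,\eta_s,\psi_s)\in\mbox{Im}(\bar h_{x_0,\eta_s})$) and $(w_s^*,v_s^*)\in\mathcal W\times\mathcal V$ with $y_s^*:=h'(x_0,u_s,\eta_s,w_s^*,v_s^*)\notin\mbox{Im}(\hat h_{x_0',u_s,\b_s})$.

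Next I would localise. Under A2, $h^{-1}$ is compact-valued (closed by continuity of $h$, bounded by hypothesis) and upper semicontinuous; the two compositions in \dref{barfh} then make $(x',u,\varphi)\mapsto\mbox{Im}(\hat h_{x',u,\varphi})$ upper semicontinuous with compact values. In particular $d(y_s^*,\mbox{Im}(\hat h_{x_0',u_s,\b_s}))>0$, and continuity of $h'$ together with this upper semicontinuity produces neighborhoods $N_s\ni x_0$, $N_s'\ni x_0'$, $V_s\ni u_s$, and a radius $\rho_s>0$ on which \dref{uthe} survives every perturbation of $(x,\chi,\psi,w,v)$ of size $\le\rho_s$ about $(x_0,\eta_s,\psi_s,w_s^*,v_s^*)$ and every $u\in V_s$. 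Compactness of $\overline{B(0,C_u)}$ reduces the covering $\{V_s\}$ (after varying the $u_s$ with $\eta_s$ held fixed) to finitely many pieces, which I disjointify into the partition $\{\mathcal U^{s}_l\}_l$ with representative data $(w^*_{s,l},v^*_{s,l})$. Intersecting the $N_s$'s and the $N_s'$'s and taking the minimum of the $\rho_s$'s yields one local box $N_{\a_0}\times N_{\a_0}'\ni\a_0$ with a common radius $d_{\a_0}\in(0,\sqrt m\sigma_{\a_0})$ on which \dref{uthe} holds simultaneously for every $\eta_s\in\Delta_{\a_0}$.

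Running $\a_0$ over the compact set $A_k\subset \Theta\times\Theta$ and extracting a finite subcover gives boxes $\{N_i\times N_i'\}$ with associated data $\a_i,\Delta^k_i,\sigma_{\a_i},d_{\a_i}$; discarding boxes disjoint from $A_k$ and coordinate-wise shrinking the survivors into pairwise-disjoint rectangles delivers (i). Setting $\sigma_k=\min_i\sigma_{\a_i}$, $\hat n_k=\max_i|\Delta^k_i|$ (padding smaller $\Delta^k_i$ by point repetition while preserving the density bound) and $d_k=\min(\min_i d_{\a_i},\sqrt m\sigma_k/2)$ delivers constants depending only on $k$ as required by (ii). The main obstacle is the upper-semicontinuity step for $\hat h$: one must propagate A2's upper semicontinuity of $h^{-1}$ through the two compositions in \dref{barfh} to obtain upper semicontinuity with compact values of $(x',u,\varphi)\mapsto\mbox{Im}(\hat h_{x',u,\varphi})$, so that an open ball around $y_s^*$ remains in the complement under joint perturbation of all six variables. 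Once this robustness lemma is in hand, the cardinality padding, the disjointification of the cover, and the uniformisation of $\sigma_k,d_k,\hat n_k$ are routine bookkeeping.
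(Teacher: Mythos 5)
Your proposal is correct and follows essentially the same route as the paper: extract a finite $1/(C_w-\sigma_\a)$-dense product $\Delta_\a\subset\mathcal P_\a$ by compactness of $\overline{B(0,C_0)}$, use \dref{macalP} to get pointwise witnesses $(\psi_s,w^*,v^*)$ for each $\eta_s$ and each $u$, upgrade the single non-inclusion to a robust one via continuity of $h',\bar h$ and upper semicontinuity of $\hat h$ (with a finite disjoint partition of $\overline{B(0,C_u)}$), and finish by a finite subcover of the compact $A_k$ plus disjointification. Your extra remark that $h^{-1}$ is in fact compact-valued, which is what makes $d(y_s^*,\mbox{Im}(\hat h_{x_0',u_s,\b_s}))>0$ and hence the perturbation argument legitimate, is a useful point the paper leaves implicit, but it does not change the structure of the argument.
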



\begin{proof}
Note that  $\underline d^m (\mathcal{P}_\a|\mathcal{S})> 1/C_w$
for each $\a\in A_0$. Take a $\sigma_\a \in (0,C_w)$  and a sequence  $\{\bar{E}^{\a }_j\}_{1\leq j\leq m}$   such that $\prod_{j=1}^m \bar{E}^{\a }_j \subset \mathcal{P}_\a$ and
$\min_{j\in [1,m] }\underline d (\bar{E}^{\a }_j| \overline {B(0,C_0)})> 1/(C_w-2\sigma_\a)$. Then, for every  $j\in [1,m]$ and  $z\in \overline {B(0,C_0)}$, $z\in  B(e^{\a }_j, C_w-2\sigma_\a)
$ for some $e^{\a }_j \in \bar{E}^{\a }_j$.
Consequently, by the compactness of $\overline {B(0,C_0)}$,  there exists a finite set  $E^{\a }_j=\{e^{\a }_j\in \bar{E}^{\a }_j\}$ such that
$$
\overline {B(0,C_0)}\subset \bigcup\nolimits_{ e^{\a }_j\in E^{\a }_j} B(e^{\a }_j,C_w-2\sigma_\a).
$$
 So, for every $ \a\in A_0$ and    $j\in [1, m]$,
 \begin{equation}\label{dEa}
 \underline d (E^{\a }_j| \overline {B(0,C_0)})> 1/(C_w-\sigma_\a).
  \end{equation}
  Clearly, $\Delta_\a \triangleq\prod_{j=1}^m E^{\a }_j \subset \mathcal{P}_\a$ and $\hat{n}_{\a}\triangleq |\Delta_\a|$ is finite as well.

Now, fix $k\geq 1$.    Given
 $\alpha=(x,x')\in A_k$,
 \dref{macalP} shows that  for any $\eta_{\a s}\in \Delta_\a$ and $u\in \overline {B(0,C_u)}$,  there are some  $\psi_{\a s}\in \mathcal{V}^m$ and $( w^*_{\a s,u},v^*_{\a s,u})\in  \mathcal{W}\times  \mathcal{V}$ such that
\begin{eqnarray*}
\left\{
\begin{array}{l}
\b_{\a s}=\bar h(x, \eta_{\a s},\psi_{\a s}) \\
h'(x, u, \eta_{\a s},w^*_{\a s,u},v^*_{\a s,u} )\notin  \mbox{Im}(\hat h_{x', u, \b_{\a s}})
\end{array}.
\right.
\end{eqnarray*}
In view of  Assumption A2, 
 $h', \bar h$ are continuous and $\hat h$ is upper semicontinuous and bounded-valued.
By the  compactness of $\mathcal{W}$ and $ \mathcal{V}$, 
 $\mbox{Im}(\hat h_{x, u, \varphi})$ is upper semicontinuous and bounded-valued  at $(x, u, \varphi)$ as well. Because $\overline {B(0,C_u)}$ is compact, for each  $s\in [1,\hat{n}_{\a}]$, there exist  some mutually disjoint sets  $\{\mathcal{U}^{\a s}_l\}_{1\leq l\leq \hat n_{\a s}}$ with $\overline {B(0,C_u)}=\sum_{l=1}^{\hat n_{\a s}} \mathcal{U}^{\a s}_l$,  some points $\{( w^*_{\a s,l},v^*_{\a s,l})\in  \mathcal{W}\times  \mathcal{V}\}_{1\leq l\leq \hat n_{\a s}}$, some neighbourhoods $B^\a_{x}$ and $B^\a_{x'}$
of  $x$ and $x'$ respectively and a number $\varepsilon_{\a }>0$,
  such that
  if $u\in  \mathcal{U}^{\a s}_l$,   then
\begin{equation}\label{expoint}
h'(B^\a_{x},u, \bar D_{\a s,l})\cap ( \bigcup\nolimits_{ z\in B^\a_{x'},\varphi \in B( \beta_{\a s}, \varepsilon_{\a })  }\mbox{Im}(\hat h_{z, u, \varphi}))=\emptyset,\quad  1\leq l\leq \hat n_{\a s},
\end{equation}
where $\bar D_{\a s,l}\triangleq  B( \eta_{\a s}, \varepsilon_{\a })\times B( (w^*_{\a s,l},v^*_{\a s,l}), \varepsilon_{\a })$. Note that $B^\a_{x}, B^\a_{x'}$ and $\varepsilon_{\a }$ are taken independent of $s\in [1,\hat{n}_{\a}]$.
In addition, as long as $B^\a_{x}$ is sufficiently small, there is  a  $d_{\a }\in(0,\varepsilon_{\a })$ satisfying
\begin{equation}\label{expoint1}
 \bar h(B^\a_{x},B( \eta_{\a s}, d_{\a }), B(\psi_{\a s}, d_{\a }) \subset B( \beta_{\a s}, \varepsilon_{\a }),\quad \forall s\in [1,\hat{n}_{\a}].
\end{equation}

Now, for any $(x,x')\in A_k$,   we  find an open set $B^\a_{x}\times B^\a_{x'}$ fulfilling
\dref{expoint} and \dref{expoint1} for all $\eta_{\a s}\in \Delta_\a, s\in [1,\hat{n}_{\a}]$.
Therefore, the compact set $A_k$ can be covered by some  finite open sets $\{B_{i,k}\times B'_{i,k}, \bar n_{k-1}\leq i< \bar n_k\}$ ($\bar n_0=1$), where for each $i\in [\bar n_{k-1},\bar n_k)$, it corresponds to a set $\Delta^k_i=\prod_{j=1}^m E^{\a}_j =\Delta_\a$ for some $\a\in A_k$,  
 a sequence   $\{( \psi_{i s}, w^*_{i s,l},v^*_{i s,l})\in  \mathcal{V}^m\times \mathcal{W}\times  \mathcal{V}, \mathcal{U}^{i s}_l\subset \mathbb{R}^r\}_{  1\leq s\leq \hat{n}_{k},1\leq l\leq \hat n_{i s}}$ with $\overline {B(0,C_u)}=\sum_{l=1}^{\hat n_{is}} \mathcal{U}^{i s}_l$    and some numbers $\sigma_k, d_k, \varepsilon_{k}$ with  $0<d_{k}<\min\{\varepsilon_{k},  \sqrt{m}\sigma_k\}$, such that for any $(x,\varphi)\in B_{i,k}\times B( \b_{is}, \varepsilon_{k}) $ with
 $\b_{is}=\bar h(x,\eta_{is},\psi_{i s})$ and $\eta_{is}\in \Delta^k_i, s\in [1,\hat{n}_{k}]$,
 \begin{eqnarray}\label{Bep}
\bar  h(B_{i,k}, B(\eta_{is},d_{k}), B(\psi_{i s},d_{k})) \subset B( \beta_{is}, \varepsilon_{k})
\end{eqnarray}
 and when $u\in  \mathcal{U}^{i s}_l$, $1\leq l\leq \hat n_{i s},$
\begin{eqnarray}\label{Bimh}
\left( h'(x,u,  B( \eta_{is}, d_{k}),B( (w^*_{is,l},v^*_{is,l}), d_{k}))\right)\cap (   \bigcup\nolimits_{z\in  B'_{i,k} }\mbox{Im}(\hat h_{z, u, \varphi}))=\emptyset.
\end{eqnarray}
So, if for some $s\in [1,\hat{n}_{k}], l\in [1, \hat n_{i s}]$,    $\varphi=\bar h(x,\chi,\psi)$, $u\in  \mathcal{U}^{i s}_l$ and
  $$(x,\chi, \psi, w, v)\in B_{i,k} \times B( \eta_{is}, d_{k})\times B(\psi_{i s},d_{k}) \times B( (w^*_{is,l},v^*_{is,l}), d_{k}),$$
 then  by  \dref{Bep}--\dref{Bimh},
 $ h'(x,u, \chi, w,v)\notin (\bigcup_{z \in   B'_{i,k}}\mbox{Im}(\bar h_{z, u,\varphi}))$.

Finally,
 let $\{ N_{i,k}\}$ and $\{ N'_{i,k}\}$ be a series of  refined sets of $\{B_{j,k}\}$ and $\{B'_{j,k}\}$, respectively, such that $ N_{i,k}\cap  N_{i',k}=\emptyset$ and $N'_{i,k}\cap  N'_{i',k}=\emptyset, i'\neq i$. Clearly, $\{ N_{i,k}\times  N'_{i,k}\}_{ i\in [n_{k-1},n_k)}$ is a finite covering  of $A_k$. Without loss of generality, let $(N_i\times N'_i )\cap A_k \neq \emptyset, i\in [n_{k-1},n_k)$. Since every $ N_{i,k}\times  N'_{i,k}\subset B_{j,k}\times  B'_{j,k}$ for some $j\in [\bar n_{k-1},\bar n_k)$, \dref{uthe} follows  immediately. Besides, \dref{dEB} holds by \dref{dEa}.
\end{proof}

We now provide a  theoretical estimator  to identify parameter $\theta$.
Rewrite the finite covering  of $A_k, k\geq 1$ in Lemma \ref{CoverA} by
 \begin{equation}\label{group}
 \{\bar N_{i,k}\times\bar N'_{ij,k}, 1\leq j\leq m_{k,i}\},\quad  m_{k-1} \leq i <  m_k\,\,(m_0=1).
\end{equation}
So, $\bar N_{i,k}\cap \bar N_{j,k}=\emptyset, \forall  i\neq j$ and $\sum_{i=m_{k-1}}^{m_k-1} m_{k,i}=n_k-n_{k-1}$. Let $\theta_0$ be the center of $\Theta$.



\noindent\textbf{Algorithm:}

\begin{description}
\item[Step 1] Let $t_0=0$, $\hat\theta_0=\theta_0$ and  $\hat\Theta_{i,0}= \bar N_{i,1}\times \Theta_0$ for all $i=1, \ldots, m_1-1$.

\item[Step 2] For   $t> t_{k-1}, k\geq 1$, if $\hat\Theta_{i,{t-1}} \neq (\hat\Theta_{i,{t_{k-1}}}  \setminus  A_k)$  for all $i\in [m_{k-1},  m_k)$, denote
\begin{equation*}
J^{k}_{it}\triangleq \{j\in [1, m_{k,i}] : y_t \notin \bigcup\nolimits_{    z\in \bar N'_{ij,k}}\mbox{Im}(\bar h_{z, u_{t-1}, \varphi_{t-1}})\},\quad m_{k-1} \leq i < m_k.
\end{equation*}
Let
  $\hat\Theta_{i,t}=\hat\Theta_{i,t-1}\setminus ( ( \bar N_{i,k}\times  \bigcup_{j\in J^k_{it} } \bar N'_{ij,k})\cap A_k)$,  where $ i\in [m_{k-1},  m_k).$
  If   for all  $i\in [m_{k-1},  m_k)$,  $\hat\Theta_{i,t}\neq (\hat\Theta_{i,{t_{k-1}}}  \setminus  A_k)$,  set
\begin{equation}\label{varpgih}
\hat \theta_t=\hat \theta_{t-1}.
\end{equation}

\item[Step 3] For   $t>t_{k-1}, k\geq 1$, if  $\hat\Theta_{i,t} =(\hat\Theta_{i,{t_{k-1}}}  \setminus  A_k)$ for some  $i\in [m_{k-1},m_k)$,  take a point $(x,x)\in \hat\Theta_{i,t}$ and set
\begin{equation}\label{estmate}
\hat\theta_t=x.
\end{equation}

Set $\Theta_k=\overline{B(\hat\theta_t,c_k)}$,     $\hat\Theta_{i,t}=\bar N_{i,k+1}\times \Theta_k$ for $i=m_k, \ldots, m_{k+1}-1$,  and $t_{k}=t$.

\end{description}

\begin{remark}\label{welldefine}
If $t_{k-1}<\infty$ for some $ k\geq 1$, then the algorithm implies that $\Theta_{k-1}$, $A_k$ and $\{\bar N_{i,k}\times \bar N_{ij,k}\}_{ i\in [1,m_k),j\in[1,m_{k,i}]}$ are well defined. As a result,   in Lemma \ref{CoverA},  $\{(\eta_{is}, \psi_{i s}, w^*_{is,l},v^*_{is,l}), \mathcal{U}^{i s}_l\}_{i\in [1,n_k), s\in [1,\hat{n}_{k}], l\in [1,\hat{n}_{is} ]}$ are also  well defined.
\end{remark}

For each $k\geq 1$ and $i\in [n_{k-1},n_k)$, denote $\Gamma^k_{i}\triangleq \bigcup_{s=1}^{\hat{n}_{k}} \bigcup_{l=1}^{\hat{n}_{is}}  D^k_{is,l}$ with
\begin{equation}\label{Dkis}
 D^k_{is,l}\triangleq B(\eta_{is}, d_{k})\times  B(\psi_{i s}, d_{k})\times B( (w^*_{is,l},v^*_{is,l}), d_{k})\times \mathcal{U}^{i s}_l,
\end{equation}
where  $\eta_{is}, \psi_{is}, w^*_{is,l},v^*_{is,l}, \mathcal{U}^{i s}_l$  and  $d_{k}>0$ are defined in Lemma \ref{CoverA}.

\begin{lemma}\label{conthe}
Let  $(\varphi_0, \{u_t\}) \in \mathcal{E}$ be an experiment designed   that for each $k\geq 1$, if $t_{k-1}<\infty$ almost surely and $t_k=\infty$ on a set $D$ with $P(D)>0$, then
\begin{equation}\label{Tki}
T^k_i\triangleq\{t>t_{k-1}: (  \chi_{t-1}, \psi_{t-1},  w_{t}, v_{t}, u_{t-1} )\in \Gamma^k_{i}\}\neq \emptyset
\end{equation}
will hold almost surely on $D$  for all  $i\in [n_{k-1},n_k)$ satisfying $\theta\in N_{i,k}$, where $\psi_{t-1}\triangleq (v_{t-1},\ldots,v_{t-m})^T$. 
Then, under the  conditions  of
Lemma \ref{CoverA}, the nonlinear estimator constructed by \dref{varpgih}--\dref{estmate} satisfies
$
\lim_{t\rightarrow \infty}\hat\theta_t=\theta
$ almost surely.
\end{lemma}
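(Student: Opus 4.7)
The plan is to prove by induction on $k \geq 1$ the two statements: (a) $t_{k-1} < \infty$ almost surely, and (b) $\theta \in \Theta_{k-1}$ almost surely. The base case $k = 1$ is immediate from $t_0 = 0$ and $\Theta_0 = \Theta$. Once the induction is complete, the fact that $\hat\theta_t$ is constant on each interval $[t_k, t_{k+1})$ by \dref{varpgih}, combined with $\theta \in \Theta_k = \overline{B(\hat\theta_{t_k}, c_k)}$ and $c_k \leq 1/k$, delivers $\|\hat\theta_t - \theta\| \leq 1/k$ for every $t \geq t_k$, whence $\hat\theta_t \to \theta$ almost surely.

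The pivotal observation is that the true parameter is never excluded by the algorithm. Since $y_t = h'(\theta, u_{t-1}, \chi_{t-1}, w_t, v_t)$ with $\chi_{t-1} \in \bar h^{-1}(\theta, \varphi_{t-1}, \psi_{t-1})$ for $\psi_{t-1} = (v_{t-1}, \ldots, v_{t-m})^T \in \mathcal{V}^m$ and $(w_t, v_t) \in \mathcal{W} \times \mathcal{V}$, the definition of $\hat h$ in \dref{barfh} yields
\begin{equation*}
y_t \in \mathrm{Im}(\hat h_{\theta, u_{t-1}, \varphi_{t-1}}) \quad \mbox{for every } t \geq 1.
\end{equation*}
Hence if $\theta \in \bar N'_{ij,k}$, the index $j$ never belongs to $J^k_{it}$ and the block $\bar N_{i,k} \times \bar N'_{ij,k}$ is never subtracted from $\hat\Theta_{i,t}$. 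This suffices for (b): at the trigger time $t_k$, some $i \in [m_{k-1}, m_k)$ satisfies $\hat\Theta_{i,t_k} = \hat\Theta_{i,t_{k-1}} \setminus A_k$, and the algorithm picks $\hat\theta_{t_k} = x$ with $(x,x) \in \hat\Theta_{i,t_k}$, so $x \in \bar N_{i,k}$. If $\|x - \theta\| \geq c_k$, then $(x, \theta) \in A_k$ (using $\theta \in \Theta_{k-1}$ by induction); by \dref{group} there is a $j$ with $(x, \theta) \in \bar N_{i,k} \times \bar N'_{ij,k}$, but then $(x, \theta)$ is never removed, contradicting $\hat\Theta_{i,t_k} = \hat\Theta_{i,t_{k-1}} \setminus A_k$. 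Therefore $\theta \in \overline{B(\hat\theta_{t_k}, c_k)} = \Theta_k$.

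For (a), assume for contradiction that $t_k = \infty$ on a set $D$ with $P(D) > 0$. Let $i^* \in [m_{k-1}, m_k)$ be the unique index with $\theta \in \bar N_{i^*,k}$; existence follows from the choice of $c_k$ (which ensures $(\{\theta\} \times \Theta_{k-1}) \cap A_k \neq \emptyset$, hence some $A_k$-pair involving $\theta$ lies in the cover \dref{group}), and uniqueness from the mutual disjointness of the $\bar N_{i,k}$. By hypothesis \dref{Tki}, for each original-cover index $i \in [n_{k-1}, n_k)$ with $\theta \in N_{i,k}$, the set $T^k_i$ is nonempty almost surely on $D$. For any $t \in T^k_i$, the implication \dref{uthe} of Lemma \ref{CoverA} applied with $x = \theta$ gives
\begin{equation*}
y_t = h'(\theta, u_{t-1}, \chi_{t-1}, w_t, v_t) \notin \bigcup\nolimits_{z \in N'_{i,k}} \mathrm{Im}(\hat h_{z, u_{t-1}, \varphi_{t-1}}),
\end{equation*}
so the $j$ in the relabeling of \dref{group} with $\bar N'_{i^*j,k} = N'_{i,k}$ lies in $J^k_{i^*t}$ and the block $(\bar N_{i^*,k} \times \bar N'_{i^*j,k}) \cap A_k$ is subtracted from $\hat\Theta_{i^*,t}$. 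As $i$ exhausts all original-cover indices with $\theta \in N_{i,k}$, every $j \in [1, m_{k,i^*}]$ is realized, and hence every $A_k$-block of $\hat\Theta_{i^*, t_{k-1}}$ is eliminated in finite time. The Step~3 trigger for $i^*$ then fires, contradicting $t_k = \infty$ on $D$.

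The main obstacle is the bookkeeping between Lemma \ref{CoverA}'s indexing $\{N_{i,k} \times N'_{i,k}\}_{i \in [n_{k-1}, n_k)}$ and the regrouped version $\{\bar N_{i,k} \times \bar N'_{ij,k}\}$ of \dref{group}, together with the verification that the mere nonemptiness of each $T^k_i$ in \dref{Tki} truly translates, via \dref{uthe}, into the elimination of every $A_k$-block in $\hat\Theta_{i^*, t_{k-1}}$. Beyond this reindexing step, the argument is a clean marriage of the two inputs: the almost-sure data consistency $y_t \in \mathrm{Im}(\hat h_{\theta, u_{t-1}, \varphi_{t-1}})$ and the distinguishing inclusion \dref{uthe} supplied by Lemma \ref{CoverA}.
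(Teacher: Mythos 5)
Your proposal is correct and follows essentially the same route as the paper's proof: an induction establishing $t_k<\infty$ and $\theta\in\Theta_k$, with finiteness of $t_k$ obtained by combining the hypothesis \dref{Tki} with \dref{uthe} to force the Step~3 trigger for the block containing $\theta$, and $\theta\in\Theta_k$ obtained from the fact that $y_t\in\mbox{Im}(\hat h_{\theta,u_{t-1},\varphi_{t-1}})$ prevents any block whose second component contains $\theta$ from ever being removed. The only cosmetic difference is that the paper rules out all indices with $\mbox{diam}(\theta,\bar N_{i,k})>c_k$ from triggering, whereas you argue directly that the selected point $\hat\theta_{t_k}$ must satisfy $\|\hat\theta_{t_k}-\theta\|<c_k$; the substance is identical.
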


\begin{proof}
We first show  that under an experiment $(\varphi_0, \{u_t\}) \in \mathcal{E}$ designed in this lemma, the nonlinear algorithm will fulfill  $t_k<\infty$ and $\theta\in \Theta_k$  for all $k\geq 0$ almost surely (this also means $\Theta_k$ are well defined for all $k$ almost surely). Since  $t_0=0$ and $\Theta_0=\Theta$,
suppose for some $k\geq 1$, $t_i<\infty$  and $\theta\in\Theta_i$ for all $i\in [0, k-1]$ almost surely. We claim that $t_k<\infty$ a.s. for this $k$. Otherwise, there is a set $D$ with $P(D)>0$ such that  $t_k=\infty$ on $D$. Now, $t_{k-1}<\infty$, by Remark \ref{welldefine}, $\Theta_{k-1}$  and $A_k$ are well defined.
 Note that $(\{\theta\} \times \Theta_{k-1}) \cap A_k \neq \emptyset$ and hence
$\theta\in \bar N_{\varsigma,k}$ for some $\varsigma\in [m_{k-1},m_k)$. Let
$$
I_k\triangleq\{i\in[n_{k-1},n_k): N_{i,k}\times N'_{i,k}=\bar N_{\varsigma,k}\times \bar N'_{\varsigma j,k}, 1\leq j\leq m_{k,\varsigma}\}.
$$
So, $|I_k|=m_{k,\varsigma}>0$.
The experiment ensures
$T^k_i\neq \emptyset$ for all $i\in I_k$ on $D$ almost surely. Consequently, for each $j\in [1, m_{k,\varsigma}]$ which corresponds to an integer $i(j)\in I_k$,  there exist some random integers $t(j), s(j), l(j)$ taking values in  $  T^k_{i(j)}, [1, \hat{n}_{k}]$ and $[1,\hat{n}_{i(j) s(j)}]$  respectively  such that
 $$(  \chi_{t(j)-1}, \psi_{t(j)-1},  w_{t(j)}, v_{t(j)},  u_{t(j)-1} )\in D^k_{i(j)s(j),l(j)}\quad \mbox{a.s on } D.$$ Considering $\theta\in \bar N_{\varsigma,k}=N_{i(j),k}$,
by statement (ii) of Lemma \ref{CoverA},
\begin{eqnarray}\label{yNsj}
y_{t(j)}= h'(\theta, u_{t(j)-1},\chi_{t(j)-1},w_{t(j)},v_{t(j)})\notin\bigcup\nolimits_{x\in \bar N'_{\varsigma j,k}}\mbox{Im}(\hat h_{x, u_{t(j)-1},\varphi_{t(j)-1}})
\end{eqnarray}
holds almost surely on $D$,  where $\varphi_{t(j)-1}= \bar h( \theta,\chi_{t(j)-1},\psi_{t(j)-1} )$.

Now, by Step 3 of   the algorithm, it is clear that for each $i\in [m_{k-1},m_k-1]$,
\begin{equation}\label{Is}
\emptyset \neq \{(x,x')\in\bar N_{i,k}\times \Theta_{k-1}:x=x'\}\subset (\hat\Theta_{i,{t_{k-1}}}  \setminus  A_k).
\end{equation}
Since $t_k=\infty$ on $D$,  $\hat\Theta_{\varsigma,{t}} \neq (\hat\Theta_{\varsigma,{t_{k-1}}}  \setminus  A_k)
$ for all $t\geq t_{k-1}$ on $D$.
    Denote $\bar t_{k-1}\triangleq\max_{1\leq j\leq m_{k,\varsigma}}t(j)$, then  \dref{yNsj} yields $J^k_{\varsigma \bar t_{k-1}}=\{1,\ldots, m_{k,\varsigma}\}$ a.s. on D.   So, by Step 2,  
$$
\hat\Theta_{\varsigma,\bar t_{k-1}}=\hat\Theta_{\varsigma,{t_{k-1}}} \setminus( (\bar N_{\varsigma,k}\times \bigcup\nolimits_{1\leq j\leq m_{k,\varsigma}}\bar N'_{\varsigma j,k})\cap A_k)= (\hat\Theta_{\varsigma,{t_{k-1}}} \setminus A_k),
$$
on $D$ almost surely,  which leads to a contradiction. Therefore, $t_k<\infty$ almost surely. Moreover, Step 3 implies that $\Theta_k$ is well defined almost surely.

The remainder is devoted to verifying $\theta\in \Theta_k$ on $\{t_k<\infty\}$. Take a trajectory on which $t_k<\infty$. The follow-up arguments are restricted on this trajectory.   Denote
$$
I'_k\triangleq\{i\in [m_{k-1},m_k): \mbox{diam}(\theta,\bar N_{i,k})> c_k\},
$$
which means for each $i\in I'_k$,
there is a point $x\in \bar N_{i,k}$ such that $d(x,\theta)> c_k$.
Recall that $\theta\in \Theta_{k-1}$,  then     $(x,\theta)\in A_k$ and thus
$\theta\in \bar N'_{ij,k}$ for some $j\in [1,m_{k,i}]$ due to
$$A_k\subset \bigcup\nolimits_{i\in [m_{k-1},m_k), j\in [1,m_{k,i}]} \bar N_{i,k} \times \bar N'_{ij,k}.$$
So, for all $t>t_{k-1}$,
 $y_t\in\bigcup_{z\in \bar N'_{ij,k}}\mbox{Im}(\hat h_{z, u_{t-1}, \varphi_{t-1}})$,
which implies $((\bar N_{i,k}\times \bar N'_{ij,k})\cap A_k)\neq\emptyset$ belongs to $ \hat\Theta_{i,t}$. Consequently, $\hat\Theta_{i,t}\neq(\hat\Theta_{i,{t_{k-1}}} \setminus A_k)$ for all $t> t_{k-1}$ whenever $i\in I'_k$. Now, $t_k<\infty$, so any index $\varsigma$ causes $\hat\Theta_{\varsigma,t_k}=(\hat\Theta_{\varsigma,{t_{k-1}}} \setminus A_k)$ at Step 3 must satisfy $\varsigma\in [m_{k-1},m_k)\backslash I'_k$.  Hence,  diam$(\theta,\bar N_{\varsigma,k})\leq c_k.$
Moreover, because of \dref{Is}, $\hat\theta_{t_k}$  in   \dref{estmate} is well defined at Step 3 and $\hat\theta_{t_k}\in \bar N_{\varsigma,k}$. As a result,
\begin{equation}\label{errork}
\|\theta-\hat\theta_{t_k}\|\leq c_k\leq \frac{1}{k},
\end{equation}
which immediately yields that $\theta\in \Theta_k=\overline{B(\hat\theta_{t_k},c_k)}$ on the fixed trajectory.

Therefore, we have verified that $t_k<\infty$ and $\theta\in \Theta_k$  for all $k\geq 0$ almost surely and  hence \dref{errork} holds for all $k\geq 1$ accordingly.
Since Step 2 in the algorithm implies that for each $k\geq 1$,
\begin{eqnarray*}
\hat\theta_t=\hat\theta_{t_{k-1}},\quad t_{k-1}\leq t\leq t_k-1,
\end{eqnarray*}
the lemma is thus proved by letting $k\rightarrow\infty$.
\end{proof}

\subsubsection{Proofs of the   Theorems}

Some notations are needed in the sequel. For each $t\geq0$, denote  $ f_{t}\triangleq f(\theta, u_{t}, \chi_{t})$ and  $\Omega_t\triangleq \{\|\chi_t\|\leq C\}$. Let
 \begin{eqnarray}\label{Ft}
\mathcal{F}_t\triangleq\left\{
\begin{array}{ll}
\sigma\{\chi_0, u_0, u_{i}, w_{i}, v_i,  i\in [1,t]\},&t\geq 1\\
\sigma\{\chi_0, u_0\},&t=0
\end{array}.
\right.
\end{eqnarray}
Write $E^{i k}_j =\{e^{ik}_{s,j}\}_{1\leq s\leq |E^{i k}_j |}, j\in [1,m]$ in Lemma \ref{CoverA}. Clearly, $\prod_{j=1}^m|E^{i k}_j |=\hat n_k$.
In addition, by  Assumption A2, 
  \begin{equation}\label{f<}
f_i=\|f(\theta, u_{i}, \varphi_{i})\|\leq  C_0,\quad i=t-1,\ldots,t-m, \quad \mbox{on}\quad \Omega_{t-m}.
   \end{equation}

\begin{lemma}\label{stiFt}
Let $t_{k-1}<\infty, k\geq 1$ and $ i\in [n_{k-1},n_k)$. 
If $\underline d^m(\mathcal{P}_\a| \mathcal{S} )> 1/C_w$ for all $ \a\in A_0$ and  Assumption A2 holds,  then for each $t\geq m$, 
  there are some random integers $\{s_{t_j}\in \mathcal{F}_{t-j}\}_{j\in [1,m]}$ taking values in $\mathcal{N}_{k,j}=\{1,\ldots, |E^{i k}_j |\}$  on $\Omega_{t-m}$ such  that 
\begin{eqnarray}\label{ftjC}
I_{\Omega_{t-m}}\leq I_{ \{  f_{t-j}\in B^{ik}_{j,t}   \}},\quad j\in [1,m],
\end{eqnarray}
  where 
 $B^{ik}_{j,t}\triangleq B(e^{ik}_{s_{t_j},j},  C_w-\sigma_k), j\in [1,m]$ and $\sigma_k$ is defined in Lemma \ref{CoverA}.
\end{lemma}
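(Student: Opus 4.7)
The plan is a direct verification using the covering property of the finite set $E^{ik}_j$ built in Lemma \ref{CoverA}, combined with the measurability of $f_{t-j}$ relative to $\mathcal{F}_{t-j}$. The main task is to unpack the density condition \dref{dEB} and then select the random indices in a measurable way.

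First, I will unpack \dref{dEB}. By the definition of $\underline d$, the inequality $\underline d(E^{ik}_j\mid \overline{B(0,C_0)})>1/(C_w-\sigma_k)$ is equivalent to
$$\sup_{z\in \overline{B(0,C_0)}}\inf\{d>0:B(z,d)\cap E^{ik}_j\neq\emptyset\}<C_w-\sigma_k,$$
so every $z\in\overline{B(0,C_0)}$ lies in some ball $B(e^{ik}_{s,j},C_w-\sigma_k)$ with $s\in\mathcal{N}_{k,j}$. By \dref{f<}, on $\Omega_{t-m}$ each $f_{t-j}$ with $j\in[1,m]$ belongs to $\overline{B(0,C_0)}$, so the set
$$S_{t,j}(\omega)\triangleq\{s\in\mathcal{N}_{k,j}:f_{t-j}(\omega)\in B(e^{ik}_{s,j},C_w-\sigma_k)\}$$
is non-empty whenever $\omega\in\Omega_{t-m}$. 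I then define $s_{t_j}(\omega)\triangleq \min S_{t,j}(\omega)$ on $\Omega_{t-m}$ and set $s_{t_j}\equiv 1$ elsewhere, so that by construction $f_{t-j}(\omega)\in B(e^{ik}_{s_{t_j}(\omega),j},C_w-\sigma_k)=B^{ik}_{j,t}$ for every $\omega\in\Omega_{t-m}$, which is exactly \dref{ftjC}.

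The remaining step is to check that $s_{t_j}$ is $\mathcal{F}_{t-j}$-measurable. An induction on the state recursion $x_s=f(\theta,u_{s-1},\chi_{s-1})+w_s$, using that $\theta$ is non-random and $f$ is continuous by Assumption A2, gives $x_s\in\mathcal{F}_s$ for all $s\geq 1$; hence $\chi_{t-j}\in\mathcal{F}_{t-j}$ and, combined with $u_{t-j}\in\mathcal{F}_{t-j}$ from \dref{Ft}, $f_{t-j}$ is $\mathcal{F}_{t-j}$-measurable. Since $\Omega_{t-m}\in\mathcal{F}_{t-m}\subset\mathcal{F}_{t-j}$ and each event $\{s_{t_j}=s\}\cap\Omega_{t-m}$ is the preimage of a Borel set under the $\mathcal{F}_{t-j}$-measurable map $f_{t-j}$, $s_{t_j}$ inherits $\mathcal{F}_{t-j}$-measurability. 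I do not expect any real obstacle here; the content is routine bookkeeping on top of the density inequality already established in Lemma \ref{CoverA}, with the only subtlety being to make the selection of $s_{t_j}$ measurable (handled by the $\min$ rule above).
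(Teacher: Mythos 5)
Your proposal is correct and follows essentially the same route as the paper: both use \dref{f<} to place $f_{t-j}$ in $\overline{B(0,C_0)}$ on $\Omega_{t-m}$, invoke the covering property \dref{dEB} from Lemma \ref{CoverA} to make the index set non-empty, and take the minimal index to get a measurable selection (the paper sets $s_{t_j}=0$ off $\Omega_{t-m}$ rather than $1$, which is immaterial). Your explicit check that $f_{t-j}$ is $\mathcal{F}_{t-j}$-measurable via the state recursion is left implicit in the paper but is the right justification.
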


\begin{proof}
Since $t_{k-1}<\infty,k\geq 1$, by the algorithm and Lemma \ref{CoverA},   all the quantities appearing in the lemma are well defined. Fix
 $i\in [n_{k-1},n_k)$. Note that  by \dref{f<},  $(f_{t-1},\ldots,f_{t-m})\in\mathcal{S}$ on $\Omega_{t-m}$, then
for $j=1,\ldots,m$, define
 \begin{eqnarray}\label{stj}
s_{t_j} \triangleq
\left\{
\begin{array}{ll}
\min\{s\in \mathcal{N}_{k,j}:f_{t-j}\in B(e^{ik}_{s,j},  C_w-\sigma_k) \},& \mbox{on }\Omega_{t-m}\\
0,&\mbox{on }\Omega^c_{t-m}
\end{array}.
\right.
\end{eqnarray}
Random sequence $\{s_{t_j}\}_{ j\in [1,m]}$ is well defined  on $\Omega_{t-m}$ since  $\underline d^m (\Delta^k_i|\mathcal{S})>1/(C_w-\sigma_k)$ by Lemma \ref{CoverA}. So,  $s_{t_j}\in \mathcal{F}_{t-j}$ and  \dref{ftjC} follows immediately.
\end{proof}

\begin{lemma}\label{inputsta}
Let $(\chi_0, \{u_t\})\in \mathcal{E}$ be a $C$-recurrent experiment and $\underline d^m(\mathcal{P}_\a| \mathcal{S} )> 1/C_w$ for all $ \a\in A_0$. 
Then, under    Assumptions A1--A2, for each $k\geq 1$,
\dref{Tki}
holds for all  $i\in [n_{k-1},n_k)$ a.s. whenever  
$t_{k-1}<\infty$ a.s..
\end{lemma}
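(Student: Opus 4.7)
The plan is to prove that, under the hypothesis $t_{k-1}<\infty$ a.s., at every time $t>t_{k-1}+m$ lying in $\Omega_{t-m-1}$ one has $P(t\in T^k_i\mid\mathcal{F}_{t-m-1})\geq p^*>0$ uniformly, and then to combine this with $C$-recurrence through a L\'evy-style second Borel--Cantelli argument along a thinned subsequence. Since $[n_{k-1},n_k)$ is finite, concluding $T^k_i\neq\emptyset$ a.s.\ for each $i$ gives the claim.

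For the lower bound, I would apply Lemma \ref{stiFt} at time $t-1$ to obtain random integers $s_{(t-1)_j}\in\mathcal{F}_{t-j-1}$ with $f_{t-j-1}\in B(e^{ik}_{s_{(t-1)_j},j},C_w-\sigma_k)$ on $\Omega_{t-m-1}$, for $j\in[1,m]$; the tuple designates $\hat s(t)\in[1,\hat n_k]$, and the partition $\overline{B(0,C_u)}=\sum_l\mathcal{U}^{i\hat s(t)}_l$ picks out a unique $\hat l(t)$ from $u_{t-1}$. Since $x_{t-j}=f_{t-j-1}+w_{t-j}$, the event $\{(\chi_{t-1},\psi_{t-1},w_t,v_t,u_{t-1})\in D^k_{i\hat s(t),\hat l(t)}\}$ is forced, for a fixed small $\delta_0>0$, by the simultaneous noise hits $w_{t-j}\in B(\tilde w_j(t),\delta_0)$ and $v_{t-j}\in B(\psi_{i\hat s(t),j},\delta_0)$ for $j\in[1,m]$, together with $(w_t,v_t)\in B((w^*_{i\hat s(t),\hat l(t)},v^*_{i\hat s(t),\hat l(t)}),\delta_0)$; here $\tilde w_j(t):=e^{ik}_{s_{(t-1)_j},j}-f_{t-j-1}$ lies in $\mathcal{W}$ and is $\mathcal{F}_{t-j-1}$-measurable. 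Iterating A1(iii) across the $w$-events via the tower property (each $w_{t-j}$ is independent of $\mathcal{F}_{t-j-1}$ and the targets lie in the compact set $\mathcal{W}$) gives the joint lower bound $p_0^{m+1}$ for the $w$-block conditioned on $\mathcal{F}_{t-m-1}$. Because $\hat s(t)$ and $\hat l(t)$ are measurable with respect to $\sigma(\chi_0,\{u_s\},\{w_s\})$, independent of $\{v_s\}$ by A1(i)--(ii), applying A1(iii) to the i.i.d.\ $v$-coordinates contributes $q_0^{m+1}$. Multiplying gives $p^*:=p_0^{m+1}q_0^{m+1}>0$.

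To conclude, enumerate the stopping times $\tau_1<\tau_2<\cdots$ of all $t>t_{k-1}+m$ at which $\Omega_{t-m-1}$ occurs, and thin to $\tau'_n\geq\tau'_{n-1}+m+2$; by $C$-recurrence together with $t_{k-1}<\infty$, the $\tau'_n$ are a.s.\ finite and tend to infinity. Since $\mathcal{F}_{\tau'_n-m-1}\supset\mathcal{F}_{\tau'_{n-1}}$, tower conditioning yields $P(A_{\tau'_n}\mid\mathcal{F}_{\tau'_{n-1}})\geq p^*$, where $A_{\tau'_n}:=\{\tau'_n\in T^k_i\}\in\mathcal{F}_{\tau'_n}$; L\'evy's extension of the second Borel--Cantelli lemma then forces $A_{\tau'_n}$ to occur i.o., whence $T^k_i$ is a.s.\ infinite. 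The main obstacle I foresee is the order-of-conditioning bookkeeping in the probability estimate: $\hat s(t)$ is only fully determined at time $t-2$, yet the $v$-targets it indexes must be matched by $v_{t-m},\ldots,v_t$, whose earliest entries precede this determination. This is resolved by the cross-independence $\{w_s\}\perp\{v_s\}$ in A1(i), which lets the $v$-block be treated as i.i.d.\ draws independent of $(\hat s(t),\hat l(t))$ and allows A1(iii) to apply uniformly.
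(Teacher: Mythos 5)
Your proposal is correct and follows essentially the same route as the paper's proof: a uniform positive lower bound on the conditional probability of landing in $\Gamma^k_i$, obtained by using Lemma \ref{stiFt} to select the grid point $\eta_{i\hat s(t)}$ measurably and then steering each noise coordinate into its target ball via A1(iii) and the tower property (with the $v$-block handled through the $w$--$v$ cross-independence, which is exactly the role of the paper's conditioning on $\mathcal{F}^{\chi}_t$), followed by a conditional Borel--Cantelli argument driven by $C$-recurrence. The only difference is cosmetic: the paper restores adaptedness by splitting time into $m+1$ residue classes modulo $m+1$, whereas you thin the recurrence times to enforce gaps of $m+2$; both devices are standard and equivalent here.
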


\begin{proof}
Fix a $k\geq 1$ that $t_{k-1}<\infty$ a.s. and take an integer $ i\in [n_{k-1},n_k)$.
Let
\begin{eqnarray}
\left\{
\begin{array}{ll}
D^k_{i1}(1)\triangleq   B(\eta_{i1}, d_{k})\times  B(\psi_{i1}, d_{k})\\
D^k_{is}(1)\triangleq  ( B(\eta_{is}, d_{k}) \backslash \bigcup\nolimits_{j=1}^{s-1}    B(\eta_{ij}, d_{k})  )\times  B(\psi_{is}, d_{k}), & s\in [2,\hat n_k]
\end{array}
\right.
\end{eqnarray}
and $D^k_{is,l}(2)\triangleq  B( (w^*_{is,l},v^*_{is,l}), d_{k})\times \mathcal{U}^{i s}_l$. Recall that $\{\mathcal{U}^{i s}_l  \}$ are mutually disjoint,
then
$ \{ D^k_{is}(1) \times  D^k_{is,l}(2)\}_{ s\in [1,\hat{n}_{k}], l\in [1,\hat{n}_{is}]}$
are mutually disjoint as well.
As a result, for  $t\geq m$, 
\begin{eqnarray*}\label{PGamma}
&&P\left((  \chi_{t}, \psi_{t},  w_{t+1}, v_{t+1},u_t  )\in \Gamma^{k}_{i}|\mathcal{F}_{t-m} \right)I_{\Omega_{t-m}}\nonumber\\
&=&\sum_{s=1}^{\hat{n}_{k}} \sum_{l=1}^{\hat{n}_{is}} P\left((  \chi_{t}, \psi_{t},  w_{t+1}, v_{t+1},u_t  )\in D^k_{is}(1) \times  D^k_{is,l}(2)|\mathcal{F}_{t-m}\right)I_{\Omega_{t-m}}\nonumber\\
&=&\sum_{s=1}^{\hat{n}_{k}} \sum_{l=1}^{\hat{n}_{is}}   E(   I_{\{\chi_{t}, \psi_{t})   \in D^k_{is}(1) \}}
 P ((w_{t+1}, v_{t+1},u_t )\in  D^k_{is,l}(2)|   \mathcal{F}_{t} )  |\mathcal{F}_{t-m}       )I_{\Omega_{t-m}},\,\, \mbox{a.s.}.
\end{eqnarray*}
By Assumption A1, for each $s\in [1,\hat{n}_{k}]$ and $ l\in [1,\hat{n}_{is}]$,  there is a $\rho_{k,1}>0$ such that
\begin{eqnarray*}
 &&P ((w_{t+1}, v_{t+1},u_t  )\in  D^k_{is,l}(2) |   \mathcal{F}_{t} )\\
&=&P ((w_1, v_1 )\in  B( (w^*_{is,l},v^*_{is,l}), d_{k})) I_{\{u_t\in \mathcal{U}^{i s}_l\}} \geq \rho_{k,1}I_{\{u_t\in \mathcal{U}^{i s}_l\}}  \quad \mbox{a.s.},
\end{eqnarray*}
and hence, by the independence of $\chi_t$ and $\psi_t$,   \dref{WV>0} indicates that for some $\rho_{k,2}>0$,
\begin{eqnarray}\label{PGamma2}
&&P ((  \chi_{t}, \psi_{t},  w_{t+1}, v_{t+1}, u_t )\in \Gamma^k_{i}|\mathcal{F}_{t-m}  )I_{\Omega_{t-m}}\nonumber\\
&\geq & \rho_{k,1}\sum_{s=1}^{\hat{n}_{k}}\sum_{l=1}^{\hat{n}_{is}}P (   \{ (\chi_{t}, \psi_{t})   \in   D^k_{is}(1)\}\cap \{u_t\in \mathcal{U}^{i s}_l\}
       |\mathcal{F}_{t-m}    )I_{\Omega_{t-m}}\nonumber\\
&=& \rho_{k,1}\sum_{s=1}^{\hat{n}_{k}}P (  (\chi_{t}, \psi_{t})   \in   D^k_{is}(1)
       |\mathcal{F}_{t-m}    )I_{\Omega_{t-m}}\nonumber\\
       &\geq & \rho_{k,1}\sum_{s=1}^{\hat{n}_{k}}
E ( I_{\{\chi_{t}   \in  B(\eta_{is},d_k) \backslash \bigcup\nolimits_{j=1}^{s-1}    B(\eta_{ij}, d_{k})  )  \}}P ( \psi_{t}   \in  B(\psi_{is},d_k)|\mathcal{F}^{\chi}_t )|\mathcal{F}_{t-m} )I_{\Omega_{t-m}}\nonumber\\
      &\geq & \rho_{k,1} \rho_{k,2}    P(\chi_{t}\in \bigcup\nolimits_{s\in[1,\hat{n}_{k}]}B(\eta_{is}, d_{k}) |\mathcal{F}_{t-m} )I_{\Omega_{t-m}}
   ,\quad \mbox{a.s.},
\end{eqnarray}
where $\mathcal{F}^{\chi}_t\triangleq \sigma\{\mathcal{F}_{t-m} \cup \sigma\{\chi_t\}   \}, t\geq m $. So, $\mathcal{F}_{t-m}\subset \mathcal{F}^{\chi}_t$.

Now, at time $t\geq m$,
take  $\{s_{t_j}\}_{j\in [1,m]}$  in Lemma \ref{stiFt}, which   corresponds to some random index $s_t$ and
 point $\eta^k_{is_t}=(e^{ik}_{s_{t_1},1},\ldots,e^{ik}_{s_{t_m},m})^T$   taking values   in  $ \{1,\ldots, \hat{n}_k\}$ and   $\Delta^k_i$ on set $\Omega_{t-m}$, respectively. 
Let $\bar d_{k}=d_{k}/\sqrt{m}<\sigma_k$ and
\begin{eqnarray}
\left\{
\begin{array}{ll}
\Omega^{ik}_{t,m}=\Omega_{t-m}\\
\Omega^{ik}_{t,j}\triangleq {\{x_{t-j} \in B(e^{ik}_{s_{t_{j+1}},j+1}, \bar d_{k}) \}}\cap \Omega^{ik}_{t,j+1}, & j\in [1,m-1]
\end{array}.
\right.
\end{eqnarray}
According to Lemma \ref{stiFt}, $\Omega^{ik}_{t,j}$ is $\mathcal{F}_{t-j} $ measurable, $j\in [1,m]$.
So, by Assumption A1 and Lemma \ref{stiFt},   for any $t\geq m$, there is a $\rho_{k,3}>0$ such that
\begin{eqnarray*}
&&P(\chi_{t}\in \bigcup\nolimits_{s\in[1,\hat{n}_{k}]}B(\eta_{is}, d_{k})|\mathcal{F}_{t-m})I_{\Omega_{t-m}}
\\
&\geq&P(\chi_{t}\in B(\eta_{is_{t}}, d_{k})|\mathcal{F}_{t-m}) I_{\Omega_{t-m}}\\&
\geq& P(\{x_{t} \in B(e^{ik}_{s_{t_1},1}, \bar d_{k})\}\cap   \Omega^{ik}_{t,1}|\mathcal{F}_{t-m})\\
&= &E( P( w_{t}\in B(e^{ik}_{s_{t_1},1}-f_{t-1}, \bar d_{k})   |   \mathcal{F}_{t-1}   )              I_{ \Omega^{ik}_{t,1}} |\mathcal{F}_{t-m})\\
&\geq & \rho_{k,3}P( \Omega^{ik}_{t,1}|\mathcal{F}_{t-m})\\&
=& \rho_{k,3}P(\{x_{t-1} \in B(e^{ik}_{s_{t_2},2}, \bar d_{k})\}\cap   \Omega^{ik}_{t,2}|\mathcal{F}_{t-m})\geq \cdots\geq \rho^m_{k,3} I_{\Omega_{t-m}},\quad \mbox{a.s.},
\end{eqnarray*}
where the third inequality follows from  \dref{ftjC}.
So, in view of \dref{PGamma2}, for each $t\geq m$,
\begin{eqnarray}\label{conditionalP}
P((  \chi_{t}, \psi_{t},  w_{t+1}, v_{t+1}, u_t )\in \Gamma^k_{i} |\mathcal{F}_{t-m})I_{\Omega_{t-m}}
\geq \rho_{k,1}\rho_{k,2}\rho^m_{k,3}I_{\Omega_{t-m}},\quad  \mbox{a.s.}.
\end{eqnarray}


Now, for $t\geq 1$ and $l\in [0,m]$, denote $\zeta_{t,l}\triangleq \chi_{ (m+1)(t-1)+l}$ and
$$\zeta'_{t,l}\triangleq (  \chi_{ (m+1)t-1+l}, \psi_{ (m+1)t-1+l},  w_{ (m+1)t+l}, v_{ (m+1)t+l},  u_{ (m+1)t-1+l}).$$
Clearly,     $\{( \zeta_{t,l},  \zeta'_{t,l} )\}_{t\geq 1 }$
is adapted to the filtration $\{ \mathcal{F}'_{t,l}\}_{t\geq 1 }$ with $ \mathcal{F}'_{t,l}\triangleq\mathcal{F}_{ (m+1)t+l}$.
Since the experiment is $C$-recurrent,
$\sum_{t=m}^\infty I_{\Omega_{t-m}} =\infty$ almost surely.
So, by \dref{conditionalP},
$$
\sum\nolimits_{l=0}^m P_l =\infty,\quad \mbox{a.s.}\quad \mbox{with}\quad   P_l\triangleq \sum\nolimits_{t=2}^{\infty} P\left( \|\zeta_{t,l}\|\leq C,  \zeta'_{t,l}\in \Gamma^{k}_{i}|\mathcal{F}'_{ t-1,l}\right),
$$
which means there at least exists some $l\in [0, m]$ such that $P_l=\infty$ a.s..
According to the Borel-Cantelli-L$\acute{e}$vy  theorem,
$$
P\left(\{(  \chi_{ t}, \psi_{ t},  w_{ t+1}, v_{ t+1}, u_t )\in \Gamma^{k}_{i}\}, \mbox{ i.o.}\right)=1.
$$
Since $t_{k-1}<\infty$ almost surely, it is obvious that
 for every $i\in [n_{k-1},n_k)$,
$$
P\left(\{(  \chi_{t_{k-1}+ t}, \psi_{t_{k-1}+ t},  w_{t_{k-1}+ t+1}, v_{ t_{k-1}+t+1}, u_{t_{k-1}+ t} )\in \Gamma^{k}_{i}\}, \mbox{ i.o.}\right)=1.
$$
The result follows immediately.
\end{proof}

\emph{Proof of Theorem \ref{idensta}:} It is a direct result of Lemmas  \ref{conthe} and \ref{inputsta}.

\emph{Proof of Theorem \ref{iden}:} Given
 $\a\in A_0$, since  $\underline d^m ( \mathcal{P}_\a(\epsilon )| \mathbb{R}^{pm} )> 1/C_w$ for some $\epsilon>0$,  a countable set $\Delta_\a=\prod_{j=1}^m E^{\a }_j \subset \mathcal{P}_\a(\epsilon )$ exists ($|\Delta_\a|=\aleph_0 $) and
  $\underline d (E^{\a }_j| \mathbb{R}^{p} )> 1/(C_w-\sigma_\a), \sigma_\a \in (0,C_w)$.
  If $\eta\in  \Delta_\a$, by \dref{uthege}, for any $u\in \overline{B(0,C_u)}$,   there are some  $\psi\in\mathcal{V}^m$ and   $\pi(u)\in \mathcal{W}\times \mathcal{V}$ such that  
\begin{eqnarray*}
 \mathcal{B} h'\left(\check{x}_\epsilon,\check{u}_\epsilon,\check{\eta}_\epsilon,  \check{\pi}_\epsilon(u)\right )\notin \mbox{Im}( \mathcal{B}\hat h)^\epsilon_{\check{x}'_\epsilon, \check{u}_\epsilon,  \check{\b}}\quad \mbox{with}\quad  \check{\b}=\bar h(\check{x}_\epsilon,\check{\eta}_\epsilon,\check{\psi}_\epsilon).
\end{eqnarray*}
As a result, by \dref{BneqB} and \dref{Bhhh},
$$
h'\left(\check{x}_\epsilon,\check{u}_\epsilon,\check{\eta}_\epsilon,  \check{\pi}_\epsilon(u)\right )\cap ( \bigcup\nolimits_{ \psi\in \mathcal{V}^m,\pi\in \mathcal{W}\times \mathcal{V} }\hat  h(\check{x}'_\epsilon, \check{u}_\epsilon, \check{\b},  \check{\psi}_\epsilon, \check{\pi}_\epsilon))=\emptyset.
$$
Moreover, $\mbox{Im}(\hat h_{\check{x}'_\epsilon, \check{u}_\epsilon, \check{\b}})\subset  \bigcup_{ \psi\in \mathcal{V}^m,\pi\in \mathcal{W}\times \mathcal{V} }\hat  h(\check{x}'_\epsilon,\check{u}_\epsilon, \check{\b},  \check{\psi}_\epsilon, \check{\pi}_\epsilon)$, then it yields
$$h'\left(\check{x}_\epsilon,\check{u}_\epsilon,\check{\eta}_\epsilon,  \check{\pi}_\epsilon(u) \right )\cap  \mbox{Im}(\hat h_{\check{x}'_\epsilon, \check{u}_\epsilon, \check{\b}})=\emptyset.$$
So, a similar proof of  Lemma \ref{CoverA} shows that
Lemma \ref{CoverA} holds with $\mathcal{P}_\a$ replaced by $\mathcal{P}_\a(\epsilon)$,  $ \hat{n}_{k}= \aleph_0 $ and  $\mathcal{S}=\mathbb{R}^{mp}$ ($C_0=\infty$). Now, since any $(\chi_0, \{u_t\})$ can be viewed as a $C$-recurrent experiment with $C=\infty$ and Lemmas \ref{stiFt}--\ref{inputsta} are still true for $C=\infty$,  the result follows from  Lemma \ref{conthe}.

\section{ Implementable Algorithm}\label{ImA} The estimator  in Section \ref{nonest} is only theoretical valid, 
so we are going to develop an implementable nonlinear estimator here. For simplicity,
study the following basic  control system
\begin{equation}\label{ymodel}
y_{t+1}=f(\theta,\varphi_{t})+u_t+w_{t+1},\quad t\geq 1-m
\end{equation}
in an experiment $(\chi_0, \{u_t\})\in \mathcal{E}$,  
where $\mathcal{E}$ is defined by \dref{E},
 $\theta\in\Theta\subset \mathbb{R}^n$, $u_t, y_t, w_t$ are scalars  and
$\varphi_t= (y_t,  \ldots, y_{t-m+1})^T$. Moreover,
 $f(x,z): \mathbb{R}^n\times \mathbb{R}^{m}\to \mathbb{R}$ is known and $\frac{\partial f(x,z)}{\partial x}$ exists. Both the above two functions are continuous. Assume

  \begin{description}


\item[B1]
 $\{w_t\} $ is an  i.i.d sequence with $E w_1=0$ and  $E |w_1|^\kappa<\infty, \kappa>4$. In addition,\\
 (i) if $C_w<\infty$,  $w_1$  satisfies \dref{w1Cw};\\
(ii) if $C_w=\infty$, then for every $C'>0$,
$$\inf\nolimits_{\|z\|\leq C'
   } P( w_1\in    B(z, \delta))>0,\quad \forall \delta>0.$$
\end{description}

\begin{remark}\label{rhoneq0}
Assumption B1  includes  a large class of familiar distributions, such as uniform distribution $U(-C_w,C_w)$ for finite $C_w$, as well as
  Gaussian distributions and t-distributions for $C_w=\infty$.

\end{remark}

\subsection{Grid Searching Estimator}\label{GSE}
Assumption B1 implies that $E w^2_1$ exists. Denote  $\sigma^2_w\triangleq E w^2_1$ and $\bar{\sigma}^2_{w}\triangleq E(w^2_1-\sigma^2_w)^2$.   Recall that $\Omega_{i}=\{\|\varphi_{i}\|\leq C\}$ for some given $C>0$ ($C$ can be taken $\infty$). Let $\gamma>0$ and  define
\begin{equation}\label{Omegabar}
\Omega_{i}(\gamma,C)\triangleq
\left\{
\begin{array}{ll}
 \Omega_{i-m},&C_w<\infty\\
  \Omega_{i-m} \cap\{\|\varphi_i\|\leq \gamma\} ,&C_w=\infty
\end{array}.
\right.
\end{equation}
Let $\eta_t(\gamma)\triangleq \sum_{i=1}^t \Omega_{i}(\gamma,C)$.
At time  $t\geq 2$, the grid searching  estimator is designed according to function $\hat G_t: \mathbb{R}^n\times \mathbb{R}\to \mathbb{R}$ defined below:
 \begin{eqnarray}\label{hatGt=0}
\qquad\hat G_{t}(x,x')\triangleq  \sum\nolimits_{i=1}^{t-1} (f(x, \varphi_{i})-y_{i+1}-u_i)^2I_{\Omega_{i}(\gamma,C)}-\eta_t(\gamma) x',\quad x\in \mathbb{R}^n, x'\in \mathbb{R}.\quad
\end{eqnarray}
Moreover,  we remark that the knowledge of $ \sigma^2_\omega$ can be described    by one of  the  following three scenarios:\\
(i)  $ \sigma^2_\omega$  is  known. Let   $\Sigma^0_t\equiv  \{\sigma^2_w\}$,  $t\geq 1$.
\\
(ii)  $ \sigma^2_\omega$   is unknown without any prior information.  Let $\Sigma^0_t= [0,t]$,  $t\geq 1$.\\
(iii)  $ \sigma^2_\omega$  is unknown but bounded by a known constant $\sigma>0$, i.e.,   $\sigma^2_w \leq\sigma$. Let $\Sigma^0_t\equiv [0,\sigma]$,  $t\geq 1$.\\
Let $\lambda, \gamma,C>0$ be some adjustable parameters and let
 $$
 C_\phi\triangleq\dfrac{ n\max_{x\in \Theta,\|z\|\leq \gamma}\| \frac{\partial f(x,z)}{\partial x}\|^2}{4}+1.
 $$

\noindent\textbf{Algorithm}

Step 1: At time $t=0$, denote $o_{ 0}$ and $\sigma^2_{ 0}$ as the center points of sets $ \Theta$ and $\Sigma^0_{0}$, respectively. Set
\begin{equation}\label{the't0}
\hat \theta_{ 0}=o_{0}\quad \mbox{and}\quad \hat\sigma^2_{ 0}=\sigma^2_{0}.
\end{equation}


Step 2: At time $t\geq 1$, equally divide $\Theta$ and $\Sigma^0_t$ into two finite sequences of small boxes $\{\Theta_{ti}\}$ and $\{\Sigma_{tj}\}$    that $ \Theta=\bigcup_i \Theta_{ti}$ and $\Sigma^0_t=\bigcup_j \Sigma_{tj}$, where
the side lengthes of  $\Theta_{ti}$ and $\Sigma_{tj}$  are less than $1/(t^{\frac{1}{4}-\frac{1}{2\kappa}-\lambda})$ and $1/(t^{\frac{1}{2}-\frac{1}{\kappa}-\lambda})$, respectively. Let $o_{ ti} $    and   $\sigma^2_{ tj}$ be the center points of  $ \Theta_{ti}$ and  $\Sigma_{tj}$.   If
 $$
\mathcal{J}_t\triangleq\{(i,j): | \hat G_{t}(o_{ti},\sigma^2_{tj})   |\leq C_\phi t^{\frac{1}{2}+\frac{1}{\kappa}+2\lambda}  \}=\emptyset,$$
set
\begin{equation}\label{It=I0}
\hat \theta_{t}=\hat \theta_{t-1}\quad \mbox{and}\quad\hat\sigma^2_{t}=\hat\sigma^2_{t-1}.
\end{equation}
Otherwise, for $\mathcal{J}_t\neq \emptyset$, take an arbitrary $(i^*,j^*)\in \mathcal{J}_t$ satisfying
\begin{equation}\label{ij*}
 (i^*,j^*)\in\{(i,j)\in \mathcal{J}_t: \sigma^2_{tj^*}=\min\nolimits_{(i,j)\in \mathcal{J}_t}\sigma^2_{tj}\}.
 \end{equation}
 Set 
\begin{eqnarray}\label{the'}
\hat \theta_{t}=o_{ti^*}\quad\mbox{and}\quad  \hat\sigma^2_{t}=\sigma^2_{tj^*}.
\end{eqnarray}

\subsection{Strong Consistency} For $1\leq k\leq n$, let $ x^{(k)}, \bar x^{(k)} \in \mathbb{R}^{2^{k-1}n}$,
$ y^{(k)}, \bar y^{(k)} \in \mathbb{R}^{2^{k-1}m}$ and
$z^{(k)}=\mbox{col}\{x^{(k)},y^{(k)}\}$. Write $x^{(1)}=(x^{(1)}_1,\ldots,x^{(1)}_n)$. Now,
recursively define a sequence of functions  $\{ g^{(k)}_j, 1\leq k\leq j\leq n\}$ for system  \dref{ymodel}  as follows:
 \begin{eqnarray}\label{gk}
\left\{
 \begin{array}{ll}
 g^{1}_j(x^{(1)}, y^{(1)})\triangleq\frac{\partial f(x^{(1)}, y^{(1)})}{\partial x^{(1)}_j}, & 1\leq j\leq n\\
g^{k+1}_j(z^{(k)},\bar z^{(k)})\triangleq g^{k}_{k}( z^{(k)})g^{k}_{j}( \bar z^{(k)})-g^{k}_{k}(\bar z^{(k)})g^{k}_{j}(z^{(k)}),&1\leq k< j\leq n
\end{array}.\quad
\right.
\end{eqnarray}

\begin{example}\label{exg}
In system \dref{ymodel} with $n=1$, $g^1_1(x,y)=\frac{\partial f(x, y)}{\partial x}$. For $n=2$,
$$
g^2_2(x_1, x_2, \bar x_1, \bar  x_2; y, \bar y)=\frac{\partial f(x_1,x_2, y)}{\partial x_1} \frac{\partial  f(\bar x_1, \bar x_2, \bar y)}{\partial \bar x_2}-\frac{\partial f(\bar x_1, \bar x_2, \bar y)}{\partial  \bar x_1}\frac{\partial f(x_1,x_2, y)}{\partial x_2}.
$$
\end{example}


The convergence  of estimates  $\hat \theta_{t}$ is related to the density of set
\begin{equation}\label{P*x}
\mathcal{P}'\triangleq\{\b\in \mathbb{R}^{2^{n-1}m}:g^n_n(x,\b) \neq0, \forall x\in \Theta^{2^{n-1}} \}
\end{equation}
in $ \mathbb{R}^{2^{n-1}m}$ for $C_w=\infty$ or
in $\mathcal{S}=\overline {B(0,C_0)}\subset \mathbb{R}^{2^{n-1}m}$  for $C_w<\infty$, where  $C_0$ is defined similarly as that in \dref{ff'<}.
This claim is verified for the case where
the closed-loop system is stable, i.e.,
$$
\sup_{t\geq 1} \frac{1}{t}\sum_{i=1}^t y^2_i<\infty,\quad \mbox{a.s.}.
$$

\begin{theorem}\label{the}
 Under Assumption B1,  let the  closed-loop system \dref{ymodel} be  stable.
If for each $x\in \Theta$,  either
 $\underline{d}^m(\mathcal{P}'|\mathcal{S}^{2^{n-1}m})>1/C_w$ for $C_w<\infty$ or
 $\mathcal{P}' \neq \emptyset$ for $C_w=\infty$,   then   by choosing parameter $\gamma$   sufficiently large and parameter $\lambda\in (0,\frac{1}{4}-\frac{1}{2\kappa})$,   the grid searching estimator satisfies
\begin{eqnarray}\label{the'err}
\|\tilde{\theta}_{t}\|=O\left(\dfrac{1}{t^{\frac{1}{4}-\frac{1}{2\kappa}-\lambda}}\right)\rightarrow 0,\quad \mbox{a.s.}.
\end{eqnarray}
\end{theorem}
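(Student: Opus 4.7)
The plan is to decompose $\hat G_t(x,\sigma^2)$ into a deterministic ``signal'' piece plus zero-mean stochastic pieces, bound the latter at sharp rates via the $\kappa$-moment hypothesis, and close the argument with a nonlinear persistent-excitation lower bound on the signal. Using $y_{i+1}-u_i=f(\theta,\varphi_i)+w_{i+1}$, I would write
\begin{equation*}
\hat G_t(x,\sigma^2)=A_t(x)-2B_t(x)+R_t-(\sigma^2-\sigma_w^2)\eta_t(\gamma),
\end{equation*}
where $A_t(x)\triangleq\sum_{i=1}^{t-1}(f(x,\varphi_i)-f(\theta,\varphi_i))^2 I_{\Omega_i(\gamma,C)}\geq 0$ is the identification signal, while $B_t(x)\triangleq\sum_{i=1}^{t-1}w_{i+1}(f(x,\varphi_i)-f(\theta,\varphi_i))I_{\Omega_i(\gamma,C)}$ and $R_t\triangleq\sum_{i=1}^{t-1}(w_{i+1}^2-\sigma_w^2)I_{\Omega_i(\gamma,C)}$ are martingale sums. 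The selection rule \eqref{ij*} (smallest $\sigma^2$ subject to $|\hat G_t|$ being small) will drive $\hat\sigma_t^2\to\sigma_w^2$, and positivity of $A_t$ will then drive $\hat\theta_t\to\theta$ at the claimed rate.

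Under Assumption B1 with $\kappa>4$, Marcinkiewicz--Zygmund / martingale-LIL bounds applied to $\{w_{i+1}^2-\sigma_w^2\}$ and $\{w_{i+1}(f(x,\varphi_i)-f(\theta,\varphi_i))I_{\Omega_i(\gamma,C)}\}$, combined with an $\varepsilon$-net over the compact $\Theta$ and Lipschitzness of $f$ on $\{\|\varphi\|\leq\gamma\}$, should yield $\sup_{x\in\Theta}|B_t(x)|+|R_t|=o(t^{1/2+1/\kappa+\lambda})$ a.s. The closed-loop stability together with $\gamma$ chosen sufficiently large delivers $\eta_t(\gamma)\geq\rho t$ a.s. for some $\rho>0$. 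Evaluating on the cell $(i^0,j^0)$ containing $(\theta,\sigma_w^2)$, the declared side lengths $t^{-(1/4-1/(2\kappa)-\lambda)}$ and $t^{-(1/2-1/\kappa-\lambda)}$ combined with the derivative bound embedded in $C_\phi$ force $A_t(o_{ti^0})=O(t^{1/2+1/\kappa+2\lambda})$ and $|\sigma_w^2-\sigma^2_{tj^0}|\eta_t(\gamma)=O(t^{1/2+1/\kappa+\lambda})$. Hence $(i^0,j^0)\in\mathcal{J}_t$ eventually a.s.; the minimum-$\sigma^2$ rule then gives $\sigma^2_{tj^*}\leq\sigma_w^2+O(t^{-1/2+1/\kappa+\lambda})$, and testing a hypothetical $\sigma^2_{tj^*}<\sigma_w^2$ against $|\hat G_t|\leq C_\phi t^{1/2+1/\kappa+2\lambda}$ (using $A_t\geq 0$ and $\eta_t(\gamma)\geq\rho t$) produces the matching lower bound. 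Substituting the resulting $|\sigma^2_{tj^*}-\sigma_w^2|=O(t^{-1/2+1/\kappa+2\lambda})$ back into the identity yields $A_t(o_{ti^*})=O(t^{1/2+1/\kappa+2\lambda})$ a.s.

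The crux is the nonlinear persistent-excitation inequality $A_t(x)\geq c\,t\,\|x-\theta\|^2$ a.s., uniformly for $x\in\Theta$. A Taylor expansion reduces this, modulo a controllable quadratic remainder, to a spectral lower bound on $M_t\triangleq\sum_i\nabla_xf(\theta,\varphi_i)\nabla_xf(\theta,\varphi_i)^{\top} I_{\Omega_i(\gamma,C)}$; the recursion \eqref{gk} is precisely the iterated Gauss-elimination combination whose nonvanishing ($g^n_n\neq 0$ on $\mathcal{P}'$) certifies that a block of $2^{n-1}$ regressors carries full-rank Jacobian information. Mirroring the Borel--Cantelli--L\'evy construction used in Lemma~\ref{inputsta}, with Assumption B1 playing the role of \eqref{w1Cw} and the density of $\mathcal{P}'$ substituting for that of $\mathcal{P}_\a$, I would show that $\varphi_i$ enters cells where $g^n_n\neq 0$ at a positive asymptotic frequency, thereby producing the PE bound. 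Combining with $A_t(o_{ti^*})=O(t^{1/2+1/\kappa+2\lambda})$ gives $\|\tilde\theta_t\|^2=O(t^{-(1/2-1/\kappa-2\lambda)})$, the rate asserted in \eqref{the'err}. The principal obstacle is this PE step: turning the algebraic nonvanishing of $g^n_n$ into an almost-sure quantitative lower bound on $\lambda_{\min}(M_t)$ under endogenous regressors requires iteratively exploiting the $2^{n-1}$-fold doubling encoded in \eqref{gk} and controlling Taylor remainders uniformly in $x$, with the interplay between the Cesaro-stability bound on $\sum y_i^2$ and the noise-support condition being the most delicate ingredient.
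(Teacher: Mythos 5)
Your overall architecture coincides with the paper's: the decomposition of $\hat G_t$ into a nonnegative signal $A_t$, a martingale cross term, a noise-variance fluctuation and the $(\sigma^2-\sigma_w^2)\eta_t(\gamma)$ term is exactly \dref{Gtrho}; the uniform martingale bounds over the $O(t^n)$ grid points, the verification that the cell containing $(\theta,\sigma_w^2)$ lies in $\mathcal{J}_t$, and the use of the minimum-$\sigma^2$ rule \dref{ij*} to control $\sigma^2_{tj^*}$ all match Lemmas \ref{sumw-sig} and \ref{Gthe'}. The gap is in the persistent-excitation step, in two respects. First, your reduction via ``Taylor expansion modulo a controllable quadratic remainder'' to a spectral bound on $M_t=\sum_i\nabla_xf(\theta,\varphi_i)\nabla_xf(\theta,\varphi_i)^{\top}I_{\Omega_i(\gamma,C)}$ does not work as stated: $f$ is only assumed to have a continuous first derivative in $x$, so there is no second-derivative control of the remainder, and $\|o_{ti^*}-\theta\|$ is not known to be small --- that is precisely what is being proved. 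The paper instead uses the exact mean-value identity $f(o_{ti_t},\varphi_s)-f(\theta,\varphi_s)=\phi_s^{\top}(\theta_{ts})(o_{ti_t}-\theta)$ with random intermediate points $\theta_{ts}\in\Theta$, so the needed bound is $\lambda_{\min}\bigl(\sum_s\phi_s(\theta_{ts})\phi_s^{\top}(\theta_{ts})I_{\Omega_s(\gamma,C)}\bigr)\geq C_1\eta_t$ \emph{uniformly over all $\Theta$-valued evaluation sequences}; this is why $\mathcal{P}'$ in \dref{P*x} quantifies over all $x\in\Theta^{2^{n-1}}$ and why Lemmas \ref{sumaa}--\ref{minlambda} are stated for arbitrary random $\{\theta_{t,h}\}$ taking values in $\Theta$.

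Second, the passage from ``$\varphi_i$ visits cells where $g^n_n\neq0$ with positive frequency'' to the quantitative eigenvalue bound is the technical core of the proof and is only gestured at in your sketch. The paper supplies it through the determinantal sum-of-squares identity of Lemma \ref{sumaa} (expressing the leading principal minors $M(k)$ and their cofactors via the recursively defined $\mu_h(k),\nu_h(k)$, which by \dref{uvg} are exactly the functions $g^k_s$ evaluated on $2^{k-1}$-tuples of regressors), the Vieta-based eigenvalue bound of Lemma \ref{detsumaa} under the cross-correlation condition \dref{munue}, and the counting Lemma \ref{gks>}, which propagates the order-$\eta_t^{2^{n-1}}$ count of good $n$-tuples down to order-$\eta_t$ counts at level one so that both \dref{munue} and the normalization \dref{sumu>t} hold with constants independent of $t$. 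Without an argument of this kind (or a substitute for it), the claimed inequality $A_t(x)\geq c\,t\,\|x-\theta\|^2$ uniformly in $x\in\Theta$ is unsupported, and the final rate in \dref{the'err} does not follow.
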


\begin{example}
Let us consider system \dref{ymodel} with $f(x_1,x_2,y)=x_1y^{b_1}+x_2 y^{b_2}, $ where $x_1,x_2,y \in \mathbb{R}$ and $b_1\neq b_2$. By Example \ref{exg}, $g^2_2(x_1, x_2, \bar x_1, \bar  x_2; y, \bar y)=y^{b_1} \bar y^{b_2}-\bar y^{b_1}  y^{b_2}, $ which causes $\mathcal{P}'$  dense in $\mathbb{R}^{2}$.
\end{example}

\begin{example}
If $C_w=\infty$, the only requirement on $\mathcal{P}' $ for parameter identifiability  is $\mathcal{P}' \neq\emptyset$. This applies to a lot of control systems. For instance, in  system \dref{ymodel}, let $f(x,y)=\sin (xy) $ for $x,y \in \mathbb{R}$ and $\Theta=[0,2\pi]$. Example \ref{exg} shows $g^n_n(x,y)=\cos(xy)$.  If $y=1/8$, then $\cos(xy)\in [\sqrt{2}/2,1]$ for all $x\in [0,2\pi]$.  Thus, $1/8\in \mathcal{P}' $.
\end{example}

\subsection{Proof of Theorem \ref{the}} We first introduce some notations. For two vectors $p=(p_i)_{i=1}^{l}, q=(q_i)_{i=1}^{l}, l\geq 1$, we say $ p\prec q$ if there is an index $ j\in [1,l)$ such that $p_i=q_i,1\leq i\leq j$ and $p_{j+1}<q_{j+1}$.
Define a series of  sets $\{\mathcal{H}^t_k\}$ by
\begin{equation}
\mathcal{H}^t_k
\triangleq\left\{
\begin{array}{ll}
\{1,2,\ldots,t\},& k=1\\
\{(p,q):p,q\in\mathcal{H}^t_{k-1}, p\prec q\},& k\in [2,n]
\end{array}.
\right.
\end{equation}

\begin{lemma}\label{sumaa}
Let  $\a_i\triangleq (a_{i,1},\ldots,a_{i,n})^T$, $i\in [1,t]$ for some fixed  $t\geq 1$ and $n\geq 1$. Denote $M(k)$ as the $k$th order leading principal minor of $\det(\sum_{i=1}^t \a_i\a^T_i)$ for $k\in [1,n]$ and $M'(k,k)$ as the $k,k$ cofactor of $M(k+1) $ for $k\in [1,n-1]$. If $\sum_{i=1}^t a^2_{i,j}\neq 0 $ for all $j\in [1,n]$, then there is a sequence $\{\mu_h(k),\nu_h(k), h\in \mathcal{H}^t_k, k\in [1,n]\}$ such that
 each $M(k)$ and $M'(k,k)$ can be written as \footnote{$\prod_{j=1}^{0}(\sum_{h\in \mathcal{H}^t_j} \mu^2_h(j))^{-1}\triangleq 1.$}
\begin{equation}\label{MM'}
\left\{
\begin{array}{ll}
M(k)= \dfrac{ \sum_{h\in \mathcal{H}^t_k} \mu^2_h(k)}{ \prod_{j=1}^{k-1}   (\sum_{h\in \mathcal{H}^t_j} \mu^2_h(j))^{k-j-1}},&k\in [1,n]\\
 M'(k,k)=\dfrac{\sum_{h\in \mathcal{H}^t_k} \nu^2_h(k)}{\prod_{j=1}^{k-1}   (\sum_{h\in \mathcal{H}^t_j} \mu^2_h(j))^{k-j-1}},&k\in [1,n-1]
\end{array},
\right.
\end{equation}
where, for each $h=(p,q)\in \mathcal{H}^t_{k}, k\in [2,n]$,
\begin{equation}\label{muk+1}
\mu_h(k)=\mu_p(k-1)\nu_q(k-1)-\mu_q(k-1)\nu_p(k-1)
\end{equation}
and there is a function $\zeta_{h,k}(\cdot):\mathbb{R}^{tk}\rightarrow \mathbb{R}$  independent of  $\a_i, i\in [1,t]$  such that
\begin{equation}\label{zetahk}
\left\{
\begin{array}{ll}
\mu_h(k)=\zeta_{h,k}(a_{i,j},i=1,\ldots,t,j=1,\ldots,k)\\
\nu_h(k)=\zeta_{h,k}(a_{i,j},i=1,\ldots,t,j=1,\ldots,k-1,k+1)
\end{array},\quad k\geq 1.
\right.
\end{equation}
\end{lemma}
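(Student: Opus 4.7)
The plan is to prove the two identities \eqref{MM'} simultaneously by strong induction on $k$, unifying $\mu_h(k)$ and $\nu_h(k)$ under a single object. Define, for $h\in\mathcal{H}^t_k$ and $l\geq k$, an auxiliary family
\[
\phi_h^{(1,l)} := a_{h,l}, \qquad \phi_h^{(k,l)} := \phi_p^{(k-1,k-1)}\phi_q^{(k-1,l)}-\phi_q^{(k-1,k-1)}\phi_p^{(k-1,l)} \text{ for }h=(p,q)\in\mathcal{H}^t_k,
\]
so that $\mu_h(k)=\phi_h^{(k,k)}$ and $\nu_h(k)=\phi_h^{(k,k+1)}$. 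The functional-form assertion \eqref{zetahk} then follows immediately from induction on the recursion: $\phi_h^{(k,l)}$ uses exactly the columns indexed by $\{1,\ldots,k-1,l\}$, and the same polynomial expression $\zeta_{h,k}$ in those $tk$ variables produces $\mu_h(k)$ when $l=k$ and $\nu_h(k)$ when $l=k+1$.

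The master claim I would prove by induction is that for every $l_1,l_2\geq k$,
\[
\textstyle\sum_{h\in\mathcal{H}^t_k}\phi_h^{(k,l_1)}\phi_h^{(k,l_2)} \;=\; U_k\cdot\mathcal{M}^{(k)}(l_1,l_2), \qquad U_k:=\prod_{j=1}^{k-1}\Bigl(\sum_{h\in\mathcal{H}^t_j}\mu_h^2(j)\Bigr)^{k-j-1},
\]
where $\mathcal{M}^{(k)}(l_1,l_2):=\det\bigl(B_{[1:k-1]\cup\{l_1\},\,[1:k-1]\cup\{l_2\}}\bigr)$ for $B=(s_{ij})_{i,j}$ with $s_{ij}=\sum_ra_{r,i}a_{r,j}$. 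Specialising $(l_1,l_2)$ to $(k,k)$ and $(k+1,k+1)$ recovers the two stated formulas in \eqref{MM'}, while $(l_1,l_2)=(k,k+1)$ supplies an auxiliary cross-minor that is needed to close the induction. The base case $k=1$ is immediate since $\sum_i a_{i,l_1}a_{i,l_2}=s_{l_1,l_2}=\mathcal{M}^{(1)}(l_1,l_2)$ and $U_1=1$.

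For the inductive step, expand the square coming from the recursion: using $\sum_{p\prec q}(\cdots)=\tfrac12\sum_{p\neq q}(\cdots)$ and cancelling the vanishing diagonal $p=q$, a direct algebraic manipulation gives
\[
\textstyle\sum_{h\in\mathcal{H}^t_{k+1}}\phi_h^{(k+1,l_1)}\phi_h^{(k+1,l_2)} \;=\; \bigl[\textstyle\sum_p(\phi_p^{(k,k)})^2\textstyle\sum_q\phi_q^{(k,l_1)}\phi_q^{(k,l_2)} \;-\; \textstyle\sum_p\phi_p^{(k,k)}\phi_p^{(k,l_1)}\textstyle\sum_q\phi_q^{(k,k)}\phi_q^{(k,l_2)}\bigr].
\]
Applying the inductive hypothesis to each of the four sums turns the bracket into $U_k^2\bigl[\mathcal{M}^{(k)}(k,k)\mathcal{M}^{(k)}(l_1,l_2)-\mathcal{M}^{(k)}(k,l_1)\mathcal{M}^{(k)}(k,l_2)\bigr]$, which is exactly the \emph{Sylvester/Desnanot--Jacobi} identity with shared ``core'' rows/columns $[1,k-1]$ and border indices $\{k,l_1\}$ and $\{k,l_2\}$; it evaluates to $M(k-1)\cdot\mathcal{M}^{(k+1)}(l_1,l_2)$. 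A short bookkeeping computation using $V_{k-1}=M(k-1)U_{k-1}$ shows $U_k^2\,M(k-1)=U_{k+1}$, closing the induction. The recursion \eqref{muk+1} for $\mu_h(k)$ is built in by construction, and \eqref{zetahk} is preserved step by step.

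The main technical obstacle I anticipate is not the algebra itself but the careful bookkeeping: the index set $\mathcal{H}^t_k$ has cardinality $\binom{|\mathcal{H}^t_{k-1}|}{2}$, far larger than $\binom{t}{k}$, so there is substantial redundancy among the $\phi_h^{(k,l)}$, and one must verify that it is exactly cancelled by the divisor $U_k$ through the telescoping identity $U_{k+1}/U_k^2=1/M(k-1)\cdot U_k/U_{k-1}$. A secondary point is that formulation \eqref{MM'} as a quotient requires $\sum_ia_{i,j}^2\neq0$ only to guarantee $V_1\neq0$; for $k\geq3$ one should either interpret \eqref{MM'} multiplicatively or argue that the relevant $V_j$ are positive when $B$ has full rank on the leading block, which is automatic whenever the formula is invoked in the paper.
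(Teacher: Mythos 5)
Your argument is correct, but it takes a genuinely different route from the paper's. The paper proves the case $n=2$ by hand (the Lagrange identity) and then runs a \emph{dimension-reducing} induction: it writes $M(l+1)$ via a Schur complement in the $(1,1)$ entry, recognizes $\bigl(\sum_i a^2_{i,1}\bigr)M_2-M_1M_1^T$ as the Gram matrix of the new family $\{\a'_{p,q}(l+1)\}_{(p,q)\in\mathcal{H}^t_2}$ of $2\times 2$ minors, applies the lemma at dimension $l$ to that family, and then spends most of the proof on bookkeeping (showing $\mu'_h(k)=\mu_h(k+1)$ and $\nu'_h(k)=\nu_h(k+1)$ by exploiting that $\zeta_{h,k}$ and $\zeta'_{h,k}$ are independent of the values of the data) to identify the resulting quantities. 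You instead strengthen the induction hypothesis to a single bilinear identity $\sum_{h\in\mathcal{H}^t_k}\phi_h^{(k,l_1)}\phi_h^{(k,l_2)}=U_k\,\mathcal{M}^{(k)}(l_1,l_2)$ covering all bordered minors at once, close the step with the Lagrange expansion over $p\prec q$ plus the Sylvester/Desnanot--Jacobi identity, and obtain \dref{muk+1} and \dref{zetahk} for free from the definition of $\phi_h^{(k,l)}$. The underlying algebra is the same (the paper's Schur-complement computation is your Sylvester identity with a one-entry core), but your formulation makes the Gram minors explicit and avoids the paper's delicate re-identification of the primed quantities, at the price of carrying the extra cross terms $(l_1,l_2)=(k,l)$ through the induction. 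Two small remarks. First, your caveat about the quotient form of \dref{MM'} is apt: the stated hypothesis only forces $V_1\triangleq\sum_i a^2_{i,1}\neq 0$, and both your proof and the paper's are best read as the multiplicative identity $\sum_h\mu^2_h(k)=M(k)\prod_{j=1}^{k-1}V_j^{\,k-j-1}$ with $V_j\triangleq\sum_{h\in\mathcal{H}^t_j}\mu^2_h(j)$, which needs no nonvanishing; your key step $U_k^2\,M(k-1)=U_{k+1}$ also goes through multiplicatively using $V_{k-1}=M(k-1)\,U_{k-1}$. Second, the ``telescoping identity'' $U_{k+1}/U_k^2=M(k-1)^{-1}\,U_k/U_{k-1}$ in your closing paragraph is false as written and is not what your inductive step actually uses; the correct relation, which you do state and verify earlier, is $U_{k+1}=U_k^2\,M(k-1)$.
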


\begin{proof} Let $n=2$.
Clearly,  $M(1)=\sum_{i=1}^t a^2_{i,1}, M'(1,1)=\sum_{i=1}^t a^2_{i,2}$ and
\begin{eqnarray*}
M(2)&=&\left(\sum_{i=1}^t a^2_{i,1}\right)\left(\sum_{i=1}^t a^2_{i,2}\right)-\left(\sum_{i=1}^ta_{i,1} a_{i,2}\right)^2,\\
&=& \sum\nolimits_{(p,q)\in \mathcal{H}^t_{2}}\left(a_{p,1} a_{q,2}-a_{q,1} a_{p,2}\right)^2.
\end{eqnarray*}
Similarly, $M'(2,2)=\sum_{(p,q)\in \mathcal{H}^t_{2}}\left(a_{p,1} a_{q,3}-a_{q,1} a_{p,3}\right)^2.$
Hence,
the lemma is true when $n=2$ with $\mu_h(1)=a_{h,1},\nu_h(1)=a_{h,2}, h\in \mathcal{H}^t_1$ and
\begin{equation}\label{zetah2}
\left\{
\begin{array}{ll}
\mu_h(2)=a_{p,1} a_{q,2}-a_{q,1} a_{p,2}\\
\nu_h(2)=a_{p,1} a_{q,3}-a_{q,1} a_{p,3}
\end{array},\quad h=(p,q)\in \mathcal{H}^t_2.
\right.
\end{equation}

 Now, let $n\geq 3$. Suppose for some integer $l\in [2, n-1]$, there is a sequence  $\{\mu_h(k), \nu_h(k), h\in  \mathcal{H}^t_k, k\in[1,l]\}$ satisfying     \dref{MM'}--\dref{zetahk},
then we  will show the existence of $\{\mu_h(k), \nu_h(k), h\in  \mathcal{H}^t_k, k\in[1,l+1]\}$ such that \dref{MM'}--\dref{zetahk} hold.

For $k=l+1$, write  $M(k)$ as a  block matrix by
\begin{eqnarray}
\left|
\begin{array}{ll}
\sum_{i=1}^t a^2_{i,1} & M^T_1(k)\\
M_1(k)& M_2(k)
\end{array}
\right|,
\end{eqnarray}
where $$M_1(k)=\sum_{i=1}^t(a_{i,1}a_{i,2},\ldots, a_{i,1}a_{i,k})^T \,\,\,  \mbox{and}\,\,\,    M_2(k)=\sum\limits_{i=1}^t (a_{i,2},\ldots, a_{i,k})^T(a_{i,2},\ldots, a_{i,k}). $$         Since  $\sum_{i=1}^t a^2_{i,1}\neq 0$,  then
\begin{eqnarray*}
M(k)&=&\left(\sum_{i=1}^t a^2_{i,1}\right)\det \left(M_2(k)-\dfrac{M_1(k)M^T_1(k)}{\sum_{i=1}^t a^2_{i,1}} \right)\\
&=&  \dfrac{\det \left(M_2(k)\left(\sum_{i=1}^t a^2_{i,1}\right)-M_1(k)M^T_1(k) \right)}{(\sum_{i=1}^t a^2_{i,1})^{k-2}}.
\end{eqnarray*}
Note that the $(j,s)$ entry of  $M_2(k)\left(\sum_{i=1}^t a^2_{i,1}\right)-M_1(k)M^T_1(k)$ is
\begin{eqnarray*}
&&\left(\sum_{i=1}^t a_{i,(j+1)}a_{i,(s+1)}\right)    \left(\sum_{i=1}^t a^2_{i,1}\right)-\left(\sum_{i=1}^t a_{i,1} a_{i,(j+1)}\right)\left(\sum_{i=1}^t a_{i,1} a_{i,(s+1)}\right)\\
&=&\sum_{(p,q)\in \mathcal{H}^t_{2}} (a_{p,1}a_{q,(s+1)}-a_{q,1}a_{p,(s+1)})(a_{p,1}a_{q,(j+1)}-a_{q,1}a_{p,(j+1)}) ,\quad 1\leq s,j\leq k-1.
\end{eqnarray*}
Let $\a'_{p,q}(k)\triangleq (a_{p,1}a_{q,2}-a_{q,1}a_{p,2},\ldots,a_{p,1}a_{q,k}-a_{q,1}a_{p,k})^T$,
then
 \begin{equation}\label{Mj+1}
M(k)=\dfrac{ \det\left(\sum_{(p,q)\in \mathcal{H}^t_2} \a'_{p,q}(k) (\a'_{p,q}(k))^T \right)}  {(\sum_{i=1}^t a^2_{i,1})^{k-2}},\quad k=l+1.
 \end{equation}
Observe that matrix $\sum_{(p,q)\in \mathcal{H}^t_2} \a'_{p,q}(l+1) (\a'_{p,q}(l+1))^T $ has the same form of $M(l)$, which is of dimension $l$. Moreover, $a_{i,j},j\in [1,l]$ can be taken any values in $M(l)$,  
so
 by the assumption and \dref{zetah2},
\begin{equation}\label{a'a'}
\det\left(\sum_{(p,q)\in \mathcal{H}^t_2} \a'_{p,q}(l+1) (\a'_{p,q}(l+1))^T \right)    =\dfrac{\sum_{h\in  \mathcal{H}^t_{l+1}}\mu'^2_h(l)}{\prod_{j=1}^{l-1}   (\sum_{h\in \mathcal{H}^t_{j+1}} \mu'^2_h(j))^{l-j-1}}
 \end{equation}
holds for some  $\{\mu'_h(k), \nu'_h(k), h\in  \mathcal{H}^t_{k+1}, k\in[1,l]\}$ satisfying
\begin{eqnarray}\label{m'k}
\left\{
\begin{array}{ll}
\mu'_h(1)=\mu_h(2), \nu'_h(1)=\nu_h(2), & h\in \mathcal{H}^t_2 \\
\mu'_h(k+1)=\mu'_{p}(k)\nu'_{q}(k)-\mu'_{q}(k)\nu'_{p}(k),& h=(p,q)\in \mathcal{H}^t_{k+2}, k\geq1
\end{array}.
\right.
\end{eqnarray}
In addition, there is a sequence of  $\{\zeta'_{h,k}(\cdot),  h\in \mathcal{H}^t_{k+1},k\in[1,l]\} $ such that
\begin{eqnarray}\label{mn'k}
\left\{
\begin{array}{l}
\mu'_h(k)=\zeta'_{h,k}(  a_{p,1}a_{q,j}-a_{q,1}a_{p,j},(p,q)\in \mathcal{H}^t_2,j=2,\ldots,k+1)\\
\nu'_h(k)=\zeta'_{h,k}(  a_{p,1}a_{q,j}-a_{q,1}a_{p,j},(p,q)\in \mathcal{H}^t_2,j=2,\ldots,k, k+2)\\
\end{array}.
\right.
\end{eqnarray}

Considering \dref{m'k}, if $l=2$, then for all $k\in [1,l-1]$,
\begin{equation}\label{mu'=mu}
\mu'_h(k)=\mu_h(k+1) \quad\mbox{and} \quad \nu'_h(k)=\nu_h(k+1),\quad  h\in \mathcal{H}^t_{k+1}.
 \end{equation}
For $l>2$,  suppose  there is an  $s\in [1,l-2]$ such that  \dref{mu'=mu} holds       for all $k\in[1,s]$. Since $s+2\leq l$,
 then by \dref{muk+1} and \dref{m'k}, for any $ h=(p,q)\in \mathcal{H}^t_{s+2}$,
$$
\mu'_h(s+1)=\mu_{p}(s+1)\nu_{q}(s+1)-\mu_{q}(s+1)\nu_{p}(s+1)=\mu_h(s+2).
$$
This, together with \dref{zetahk} and \dref{mn'k}, infers
\begin{eqnarray*}
\mu_h(s+2)&=&\zeta_{h,s+2}(a_{i,j},i=1,\ldots,t,j=1,\ldots,s+2)\\
&=&\mu'_h(s+1)=\zeta'_{h,s+1}(  a_{p,1}a_{q,j}-a_{q,1}a_{p,j},(p,q)\in \mathcal{H}^t_2,j=2,\ldots,s+2).
\end{eqnarray*}
Note that $\zeta_{h,s+2}$ and $\zeta'_{h,s+1}$ are independent of the values of $\a_i, i\in[1,t]$,   then
\begin{eqnarray*}
&&\zeta_{h,s+2}(a_{i,j},a_{i,s+3},i=1,\ldots,t,j=1,\ldots,s+1) \\
&=&  \zeta'_{h,s+1}(  a_{p,1}a_{q,j}-a_{q,1}a_{p,j}, (p,q)\in \mathcal{H}^t_2,j=2,\ldots,s+1, s+3).
\end{eqnarray*}
Or equivalently, $\nu'_h(s+1)=\nu_h(s+2)$. Therefore,  $\mu'_h(k)=\mu_h(k+1)$ and  $\nu'_h(k)=\nu_h(k+1)$ for all $k\in [1,l-1]$.

Define $\mu_h(l+1)\triangleq \mu'_h(l)$ for all $h\in  \mathcal{H}^t_{l+1}$, then
\begin{eqnarray*}
\mu_h(l+1)&=&\mu'_{p}(l-1)\nu'_{q}(l-1)-\mu'_{q}(l-1)\nu'_{p}(l-1) \\
&=&  \mu_{p}(l)\nu_{q}(l)-\mu_{q}(l)\nu_{p}(l),\quad h=(p,q)\in \mathcal{H}^t_{l+1}.
\end{eqnarray*}
Since  $\sum_{i=1}^t a^2_{i,1}=\sum_{h\in\mathcal{H}^t_1} \mu^2_h(1), $   combining \dref{Mj+1} and    \dref{a'a'} leads to the first formula of \dref{MM'} immediately for $k=l+1$.
If $l<n-1$, also let $\nu_h(l+1)\triangleq \nu'_h(l), h\in  \mathcal{H}^t_{l+1}$. Note that
\begin{eqnarray*}
M'(l+1,l+1)
&=&\dfrac{\sum_{h\in \mathcal{H}^t_{l+1} }(\zeta'_{h,l}(a_{p,1}a_{q,j}-a_{q,1}a_{p,j},(p,q)\in \mathcal{H}^t_2,j=2,\ldots,l,l+2))^2}{\prod_{j=1}^{l}   (\sum_{h\in \mathcal{H}^t_j} \mu^2_h(j))^{l-j}}\\
&=& \dfrac{\sum_{h\in \mathcal{H}^t_{l+1}} \nu'^2_h(l)}{\prod_{j=1}^{l}( \sum_{h\in \mathcal{H}^t_j} \mu^2_h(j))^{l-j}}=\dfrac{\sum_{h\in \mathcal{H}^t_{l+1}} \nu^2_h(l+1)}{\prod_{j=1}^{l}( \sum_{h\in \mathcal{H}^t_j} \mu^2_h(j))^{l-j}}.
\end{eqnarray*}
Finally, for each  $h\in \mathcal{H}^t_{l+1}$, there is a $\zeta_{h,l+1}$ independent of $\a_i,i\in [1,t]$ such that
\begin{eqnarray*}
\mu_h(l+1)&=&\zeta'_{h,l}( a_{p,1}a_{q,j}-a_{q,1}a_{p,j},(p,q)\in \mathcal{H}^t_2,j=2,\ldots,l+1)\\
&=&\zeta_{h,l+1}(a_{i,j}, i=1,\ldots,t,j=1,\ldots,l+1)\\
\nu_h(l+1)&=&\zeta'_{h,l}( a_{p,1}a_{q,j}-a_{q,1}a_{p,j},(p,q)\in \mathcal{H}^t_2,j=2,\ldots,l, l+2)\\
&=&\zeta_{h,l+1}(a_{i,j}, i=1,\ldots,t,j=1,\ldots,l, l+2).
\end{eqnarray*}
So, with $\{\mu_h(l+1), \nu_h(l+1), h\in  \mathcal{H}^t_{l+1}\}$ defined above,  \dref{MM'}--\dref{zetahk} hold for $k=l+1$,
which completes the proof  by induction.
\end{proof}


\begin{lemma}\label{detsumaa}
Let the conditions of Lemma \ref{sumaa} hold and
denote $\lambda_{\min}(\sum_{i=1}^t \a_i\a^T_i)$ as the minimal eigenvalue of matrix $\sum_{i=1}^t \a_i\a^T_i$.
If there is a number $\epsilon>0$ such that for each $k\in [1,n-1]$ and $s\in [k+1, n]$,
\begin{eqnarray}\label{munue}
\sum\nolimits_{p,q\in \mathcal{H}^t_k} ( \mu_p(k)\nu_{q, s}(k)-\mu_q(k)\nu_{p, s}(k))^2\geq 2\epsilon \sum\nolimits_{p,q\in \mathcal{H}^t_k}  \mu^2_p(k)\nu^2_{q, s}(k),
\end{eqnarray}
where $\nu_{h, s}(k)\triangleq\zeta_{h,k}(a_{i,j},i=1,\ldots,t;j=1,\ldots,k-1,s), h\in  \mathcal{H}^t_k, s\in [k,n]$\footnote{$ \nu_{h, k}(k)=\mu_h(k)$}, then
 $$\lambda_{\min}\left(\sum_{i=1}^t \a_i\a^T_i\right) \geq \frac{\epsilon^{n-1}}{n} \min_{j\in[1,n]}\sum_{i=1}^t a^2_{ij}.$$
\end{lemma}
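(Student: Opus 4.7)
My approach is to reduce the problem to bounding $\mathrm{tr}(G^{-1})$ where $G=\sum_i\a_i\a_i^T$. Using the standard inequality $\lambda_{\min}(G)\geq 1/\mathrm{tr}(G^{-1})$ together with the cofactor formula $(G^{-1})_{kk}=\det(G_{\hat k,\hat k})/\det(G)$, it suffices to establish the Schur-complement bound
\begin{equation*}
\det(G)/\det(G_{\hat k,\hat k})\geq \epsilon^{n-1}\sum\nolimits_i a^2_{ik}\qquad\text{for every }k\in[1,n],
\end{equation*}
since then $\mathrm{tr}(G^{-1})\leq n/(\epsilon^{n-1}\min_j\sum_i a^2_{ij})$ and inversion yields the asserted lower bound on $\lambda_{\min}(G)$.

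The starting point is to recast \dref{munue}. Setting $S_k\triangleq\sum_h\mu^2_h(k)$ and $T_{k,s}\triangleq\sum_h\nu^2_{h,s}(k)$, the algebraic expansion $\sum_{p,q}(\mu_p\nu_{q,s}-\mu_q\nu_{p,s})^2=2S_kT_{k,s}-2(\sum_p\mu_p\nu_{p,s})^2$ recasts the hypothesis as the Cauchy--Schwarz deficit $(\sum_p\mu_p(k)\nu_{p,s}(k))^2\leq(1-\epsilon)S_kT_{k,s}$. Combined with the recursion \dref{muk+1} (applied both for $\mu_h(k+1)$ and, with $\nu_q(k)$ replaced by $\nu_{q,s}(k)$, to form $\nu_{h,s}(k+1)$), this delivers the one-step multiplicative bound $T_{k+1,s}\geq\epsilon S_kT_{k,s}$ valid for all $s\in[k+1,n]$.

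Iterating this from the base case $T_{1,s}=\sum_i a^2_{is}$ and noting $T_{k,k}=S_k$, I obtain $S_k\geq\epsilon^{k-1}S_1\cdots S_{k-1}\sum_i a^2_{ik}$. Substituting into the representation \dref{MM'} and telescoping produces the leading-principal-minor ratio estimate
\begin{equation*}
M(k)/M(k-1)\geq\epsilon^{k-1}\sum\nolimits_i a^2_{ik},\qquad k\in[1,n],
\end{equation*}
with the convention $M(0)=1$. Specializing $k=n$ gives $\det(G)/M(n-1)\geq\epsilon^{n-1}\sum_i a^2_{in}$, which is exactly the Schur-complement estimate at $k=n$.

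The main obstacle is to promote this last-column bound to every $k<n$, since the chain of leading principal minors only controls the column ordering $1,2,\ldots,n$. The plan is to invoke the Desnanot--Jacobi (Sylvester) determinantal identity together with the cosine-deficit bounds at intermediate levels: an appropriate choice of two deleted rows and columns writes $\det(G)\cdot\det(G[I;I])$ as $\det(G_{\hat k,\hat k})\det(G_{\hat l,\hat l})$ minus the square of a cross minor, and the Cauchy--Binet identity underlying Lemma \ref{sumaa} identifies each such cross minor with $\sum_p\mu_p(j)\nu_{p,s}(j)$ at some intermediate level $j$. The cosine deficit from the first step then eliminates the cross term, and chaining the resulting inequality with the leading-minor estimates established above furnishes $\det(G)/\det(G_{\hat k,\hat k})\geq\epsilon^{n-1}\sum_i a^2_{ik}$ for every $k$; summing and inverting concludes the proof. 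The combinatorial book-keeping of pivot choices in the Dodgson condensations matches the hierarchical structure of the $\mu_h,\nu_{h,s}$ recursion, so the algebra is expected to close cleanly.
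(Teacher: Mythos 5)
Your reduction to bounding the cofactor ratios and your treatment of the leading chain are sound: the identity $\sum_{p,q}(\mu_p\nu_{q,s}-\mu_q\nu_{p,s})^2=2S_kT_{k,s}-2(\sum_p\mu_p\nu_{p,s})^2$, the one-step bound $T_{k+1,s}\geq\epsilon S_kT_{k,s}$, and the telescoped estimate $M(k)/M(k-1)\geq\epsilon^{k-1}\sum_i a^2_{ik}$ all check out against the representation \dref{MM'}, and at $k=n$ they reproduce exactly the computation the paper performs. Your trace inequality $\lambda_{\min}(G)\geq 1/\mathrm{tr}(G^{-1})$ is also essentially the same device as the paper's Vieta step, since $\mathrm{tr}(G^{-1})\det(G)$ equals the sum of the $n$ principal $(n-1)\times(n-1)$ minors.

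The gap is precisely the step you flag as ``the main obstacle'': promoting the bound from the deleted-last-column cofactor $M(n-1)$ to an arbitrary cofactor $\det(G_{\hat k,\hat k})$. Your Desnanot--Jacobi route is only sketched, and it is not clear it can close. The hypothesis \dref{munue} controls, through $\nu_{h,s}(k)$, only minors built on column sets of the form $\{1,\ldots,k-1,s\}$ with $s\geq k$; the cofactor $\det(G_{\hat k,\hat k})$ for $k<n$ lives on the column set $\{1,\ldots,k-1,k+1,\ldots,n\}$, and the cross minors produced by Dodgson condensation likewise fall outside the family that Lemma \ref{sumaa} represents and that \dref{munue} constrains. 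Reaching them would require rerunning Lemma \ref{sumaa} along a permuted column chain, and the hypothesis is not assumed for permuted orderings. The paper avoids this entirely: it bounds the sum of the $n$ principal $(n-1)$-minors by $n$ times the largest one, observes that relabelling the coordinates of the $\a_i$ leaves $\lambda_{\min}$ unchanged, and relabels so that the largest principal $(n-1)$-minor is the \emph{leading} one $M(n-1)$, to which \dref{MM'} applies directly; the conclusion then needs only the single ratio $M(n)/M(n-1)\geq\epsilon^{n-1}\sum_i a^2_{i,n}\geq\epsilon^{n-1}\min_j\sum_i a^2_{ij}$. Either adopt that reordering device or supply the missing control of the non-leading cofactors; as written, the final step of your argument is an unproved assertion.
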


\begin{proof}
Let $\pi(n-1)$ be the set of the $(n-1)$-permutations of $\{1,2,\ldots,n\}$. For $p=(i_1,\ldots,i_{n-1})\in \pi(n-1)$, define $\a_{i,p}\triangleq (a_{i,i_1},\ldots,a_{i,i_{n-1}})^T$ and  denote the $n$ eigenvalues of $\sum_{i=1}^t \a_i\a^T_i$ by $\lambda_i,1\leq i\leq n$ with $\lambda_i\geq \lambda_{i+1},1\leq i\leq n-1$. According to the  Vieta's formulas, one has
\begin{eqnarray}
&& \prod_{i=1}^{n} \lambda_{i}=\det\left( \sum_{i=1}^t \a_{i}\a^T_{i}\right)\\
&&\sum_{ (i_1,\ldots,i_{n-1} )\in \pi(n-1) } \prod_{j=1}^{n-1} \lambda_{i_j}=\sum_{ p\in \pi(n-1) }\det\left( \sum_{i=1}^t \a_{i,p}\a^T_{i,p}\right).
\end{eqnarray}
Note that  reordering the $n$  elements   $a_{i,1},\ldots,a_{i,n}$ of vector  $\a_i, i\in [1,t]$ does not change the  minimal eigenvalue of
$\sum_{i=1}^t \a_i\a^T_i$. So,
without loss of generality, for $p_1=(1,2,\ldots,n-1)$, assume
$$
\det\left( \sum_{i=1}^t \a_{i,p_1}\a^T_{i,p_1}\right)=\max_{p\in \pi(n-1)  }\det\left( \sum_{i=1}^t \a_{i,p}\a^T_{i,p}\right).
$$
Therefore,
\begin{eqnarray}
 \lambda_n&\geq& \dfrac{\prod_{i=1}^{n} \lambda_{i}}{\sum_{ (i_1,\ldots,i_{n-1} )\in \pi(n-1) }  \prod_{j=1}^{n-1} \lambda_{i_j}}\nonumber\\
&= &\dfrac{\det\left( \sum_{i=1}^t \a_{i}\a^T_{i}\right)}{   \sum_{ p\in \pi(n-1) }\det\left( \sum_{i=1}^t \a_{i,p}\a^T_{i,p}\right)}
\geq \dfrac{\det\left( \sum_{i=1}^t \a_{i}\a^T_{i}\right)}{  n \det\left( \sum_{i=1}^t \a_{i,p_1}\a^T_{i,p_1}\right)}.
\end{eqnarray}
Consequently, by Lemma \ref{sumaa} and \dref{munue},
\begin{eqnarray}\label{lambdana}
 \lambda_n
&\geq &\dfrac{1}{n}\dfrac{ \sum_{h\in \mathcal{H}^t_n} \mu^2_h(n)} {\sum_{h\in \mathcal{H}^t_{n-1}} \mu^2_h(n-1)   }  \dfrac{ \prod_{j=1}^{n-2}   (\sum_{h\in \mathcal{H}^t_j} \mu^2_h(j))^{n-j-2}   } { \prod_{j=1}^{n-1}   (\sum_{h\in \mathcal{H}^t_j} \mu^2_h(j))^{n-j-1}}\nonumber\\
&=&\dfrac{ \sum_{(p,q)\in \mathcal{H}^t_{n}} (\mu_p(n-1)\nu_q(n-1)-\mu_q(n-1)\nu_p(n-1))^2} {n  \prod_{j=1}^{n-1}   (\sum_{h\in \mathcal{H}^t_j} \mu^2_h(j)) }\nonumber\\
&=&\dfrac{ \sum_{p,q\in \mathcal{H}^t_{n-1}} (\mu_p(n-1)\nu_q(n-1)-\mu_q(n-1)\nu_p(n-1))^2} {2n  \prod_{j=1}^{n-1}   (\sum_{h\in \mathcal{H}^t_j} \mu^2_h(j)) }\nonumber\\
&\geq & \dfrac{ \epsilon \sum_{p,q\in \mathcal{H}^t_{n-1}}  \mu^2_p(n-1)\nu^2_q(n-1)} {n  \prod_{j=1}^{n-1}   (\sum_{h\in \mathcal{H}^t_j} \mu^2_h(j)) }\nonumber\\
&=&\dfrac{ \epsilon \sum_{q\in \mathcal{H}^t_{n-1}}  \nu^2_q(n-1)} {n  \prod_{j=1}^{n-2}   (\sum_{h\in \mathcal{H}^t_j} \mu^2_h(j)) },
\end{eqnarray}
where   $ \nu_q(n-1)=\nu_{q,n}(n-1) $ for $q\in \mathcal{H}^t_{n-1}$.

Now, Lemma \ref{sumaa} implies that for any $k\in [1,n-2]$ and $h=(p,q)\in \mathcal{H}^t_{k+1}$,
\begin{eqnarray}\label{nuks}
&&\nu_{h,s}(k+1)=\zeta_{h,k+1}(a_{i,j},i=1,\ldots,t,j=1,\ldots,k,s)\nonumber\\
&=& \zeta_{p,k}(a_{i,j},i=1,\ldots,t,j=1,\ldots,k)\zeta_{q,k}(a_{i,j},i=1,\ldots,t,j=1,\ldots,k-1,s)\nonumber\\
&&-\zeta_{q,k}(a_{i,j},i=1,\ldots,t,j=1,\ldots,k)\zeta_{p,k}(a_{i,j},i=1,\ldots,t,j=1,\ldots,k-1,s)\nonumber\\
&=&\mu_p(k)\nu_{q, s}(k)-\mu_q(k)\nu_{p, s}(k),\qquad s=k+2,\ldots,n.
\end{eqnarray}
As a result, \dref{munue} yields
\begin{eqnarray*}
\sum_{h\in \mathcal{H}^t_{n-1}}  \nu^2_{h,n}(n-1)&=&\frac{1}{2}\sum_{p,q\in \mathcal{H}^t_{n-2}}( \mu_p(n-2)\nu_{q, n}(n-2)-\mu_q(n-2)\nu_{p, n}(n-2) )^2   \\
&\geq& \epsilon \left(\sum\nolimits_{p\in \mathcal{H}^t_{n-2}} \mu^2_p(n-2)\right) \left( \sum\nolimits_{q\in \mathcal{H}^t_{n-2}}   \nu^2_{q, n}(n-2)\right)\\
&\geq& \epsilon^{n-2}\prod_{j=1}^{n-2}  \left (\sum\nolimits_{h\in \mathcal{H}^t_j} \mu^2_h(j)\right) \left( \sum\nolimits_{q\in \mathcal{H}^t_{1}}   \nu^2_{q, n}(1)\right)\\
&=&\epsilon^{n-2}\prod_{j=1}^{n-2}  \left (\sum\nolimits_{h\in \mathcal{H}^t_j} \mu^2_h(j)\right)\left( \sum_{i=1}^t a^2_{i,n}  \right),
\end{eqnarray*}
which, by \dref{lambdana}, leads to
$\lambda_n\geq \frac{\epsilon^{n-1}}{n} \left( \sum_{i=1}^t a^2_{i,n}  \right).$
 The lemma thus follows.
\end{proof}

\begin{lemma}\label{Sk}
Assume either
 $\underline{d}^m(\mathcal{P}'|\mathcal{S}^{2^{n-1}m})>1/C_w$ for $C_w<\infty$ or
 $\mathcal{P}' \neq \emptyset$ for $C_w=\infty$.
Then, the following two statements hold:\\
(i) there is a sequence of sets $\mathcal{B}_{jl}\triangleq \{b_{jl,s}    \}_{ s\in [1,N_{jl}]},  j\in [1,2^{n-1}], l\in[1,m] $ with integers $ N_{jl}\geq 1$   such that
 $\prod_{j=1  }^{2^{n-1}}\prod_{l=1}^m \mathcal{B}_{jl}\subset \mathcal{P}'   $,  and if $C_w<\infty$,
 \begin{eqnarray}\label{Sih}
\underline{d}(\mathcal{B}_{jl}| \mathcal{S})>1/C_w,  \,\,\, \forall  j\in [1,2^{n-1}], l\in [1,m];
\end{eqnarray}
(ii)  there is a number $ d>0$    such that
\begin{equation}\label{|gk|>0}
\min_{x\in\Theta^{2^{n-1}}} \min_{y\in \mathcal{D}} |g^n_n(x,y)|>0,
\end{equation}
where  $\mathcal{D} \triangleq  \prod\nolimits_{j=1}^{2^{n-1}}D_j$ and
 \begin{equation}\label{Dij}
 D_{j}\triangleq
  \prod\nolimits_{l=1}^m (\bigcup\nolimits_{ s\in [1, N_{jl}]}  [b_{jl,s}-d,b_{jl,s}+d]).
\end{equation}
\end{lemma}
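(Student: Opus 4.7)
The plan is to first extract finite, one-dimensional sets $\mathcal{B}_{jl}$ with the stated density property from the density hypothesis on $\mathcal{P}'$ (statement (i)), and then deduce the uniform positivity of $|g^n_n|$ over $\mathcal{D}$ by combining continuity of $g^n_n$ with compactness of $\Theta^{2^{n-1}}$ and a uniform-continuity argument extending from a finite point set to its $d$-neighborhood (statement (ii)).

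For (i) I would split on the value of $C_w$. If $C_w = \infty$ the hypothesis reduces to $\mathcal{P}' \neq \emptyset$, so I pick any $\b^* \in \mathcal{P}'$, write its $2^{n-1}m$ coordinates as $\b^*_{jl}$, and take each $\mathcal{B}_{jl} = \{\b^*_{jl}\}$; no density bound is required in this case. If $C_w < \infty$, the hypothesis $\underline{d}^m(\mathcal{P}'|\mathcal{S}^{2^{n-1}m}) > 1/C_w$ furnishes a product $\prod_{j,l} E_{jl} \subset \mathcal{P}'$ with each one-dimensional factor $E_{jl} \subset \mathbb{R}$ satisfying $\underline{d}(E_{jl}|\mathcal{S}) > 1/(C_w - \sigma)$ for some $\sigma \in (0, C_w)$. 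This is exactly the situation opening the proof of Lemma \ref{CoverA}: by compactness of $\mathcal{S} = \overline{B(0, C_0)}$, finitely many of the $(C_w - \sigma)$-balls centered at points of $E_{jl}$ already cover $\mathcal{S}$, so I extract a finite $\mathcal{B}_{jl} \subset E_{jl}$ with $\underline{d}(\mathcal{B}_{jl}|\mathcal{S}) > 1/C_w$. The inclusion $\prod_{j,l} \mathcal{B}_{jl} \subset \prod_{j,l} E_{jl} \subset \mathcal{P}'$ is automatic.

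For (ii), observe first that $g^n_n$ is a polynomial in the components of $\partial f/\partial x$ by the recursion \dref{gk}, hence continuous in $(x, y) \in \Theta^{2^{n-1}} \times \mathbb{R}^{2^{n-1}m}$. For any $y$ in the finite set $\prod_{j,l}\mathcal{B}_{jl} \subset \mathcal{P}'$ and every $x \in \Theta^{2^{n-1}}$ we have $g^n_n(x, y) \neq 0$ by definition of $\mathcal{P}'$, so compactness of $\Theta^{2^{n-1}}$ gives $\min_x |g^n_n(x, y)| > 0$; minimizing next over the finite set $\prod_{j,l}\mathcal{B}_{jl}$ leaves a strictly positive number $c_0$. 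I then pass to the $d$-neighborhood $\mathcal{D}$ via uniform continuity: on a fixed compact neighborhood of $\Theta^{2^{n-1}} \times \prod_{j,l} \mathcal{B}_{jl}$ the function $g^n_n$ is uniformly continuous, so for $d > 0$ small enough every $y \in \mathcal{D}$ lies within coordinate-wise distance $d$ of some point of $\prod_{j,l}\mathcal{B}_{jl}$ and $|g^n_n(x, y) - g^n_n(x, y')| < c_0/2$ on such displacements; therefore $|g^n_n(x, y)| \geq c_0/2 > 0$ on all of $\Theta^{2^{n-1}} \times \mathcal{D}$, yielding \dref{|gk|>0}.

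The only step requiring real care, rather than being routine, is the density-to-finite-set extraction in (i); this is however a direct rerun of the covering argument already carried out for \dref{dEa} inside Lemma \ref{CoverA}, with the one-dimensional coordinate factors playing the role of the sets $E^{\alpha}_j$ there. Once this bookkeeping is in place, the continuity plus uniform-continuity steps of (ii) are standard.
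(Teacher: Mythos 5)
Your proposal is correct and follows essentially the same route as the paper: statement (i) is the finite-subcover extraction already used for \dref{dEa} in Lemma \ref{CoverA} (the paper simply calls it ``straightforward,'' with $N_{jl}\equiv 1$ when $C_w=\infty$), and statement (ii) is the standard continuity-plus-compactness argument, which the paper phrases as a pointwise neighborhood construction followed by a finite subcover of $\Theta^{2^{n-1}}$ with $d=\min_i d_{x(i)}$ rather than your equivalent global uniform-continuity bound.
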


\begin{proof}
Since  either  $\underline{d}^m(\mathcal{P}'|\mathcal{S}^{2^{n-1}m})>1/C_w$ for $C_w<\infty$ or
 $\mathcal{P}' \neq \emptyset$ for $C_w=\infty$, statement (i) is straightforward ($ N_{jl}\equiv1$ for $C_w=\infty$).
To show statement (ii), note that   $g^n_n(x,y)$ is continuous, then for each $ x\in \Theta^{2^{n-1}}$, there is  a number $d_x>0$ and a neighbourhood $B_x$   of $x$  such that for
$D_{j}(x)\triangleq
  \prod\nolimits_{l=1}^m (\bigcup\nolimits_{ s\in [1, N_{jl}]}  [b_{jl,s}-d_x,b_{jl,s}+d_x]),$
$$\min\nolimits_{z\in B(x)}\min\nolimits_{y\in  \prod\nolimits_{j=1}^{2^{n-1}}D_j(x)} |g^n_n(z,y)|>0.$$
Now,  $\Theta^{2^{n-1}}$ is compact, by   the finite covering theorem, there is a  sequence  $\{x(i)\in \Theta^{2^{n-1}}\}_{i\in [1,N]}$ for some  $N\in \mathbb{N}^+$     such that
 $\Theta^{2^{n-1}}\subset \bigcup_{i\in [1,N]} B_{x(i)}$. So, \dref{|gk|>0} holds  by letting $d=\min_{1\leq i\leq N} d_{x(i)}$.
\end{proof}

To state the next lemma, denote   $\eta_t\triangleq\sum_{i=1}^t I_{i-m}$,  $\Omega_\eta\triangleq \left\{\omega: \lim_{t\rightarrow \infty} \eta_t=\infty \right\}$ and let $D_{j}, j\in [1,2^{n-1}]$ be defined by \dref{Dij}.

\begin{lemma}\label{yDeta}
Under the conditions of Theorem \ref{the}, for all  sufficiently large  $t$,
\begin{eqnarray}\label{varD}
\min\nolimits_{j\in [1,2^{n-1}] }(\sum\nolimits_{h=1}^t   I_{\{\varphi_h\in D_{j}\}}I_{\Omega_{h-m}})/\eta_{t}   >C_D\quad \mbox{a.s. on  }    \Omega_\eta,
\end{eqnarray}
where $C_D>0$ is a number independent of $t$.
\end{lemma}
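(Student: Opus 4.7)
The plan is to combine a uniform positive lower bound on the one-step conditional probability $P(\varphi_h\in D_j\mid\mathcal{F}_{h-m})$ with a strong law of large numbers for bounded martingale differences. This mirrors the structure of Lemma \ref{inputsta}, but since we need a positive \emph{frequency} rather than merely an \emph{infinitely often} statement, the Borel--Cantelli--L\'evy step is replaced by a martingale LLN.

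First I would establish: for each $j\in[1,2^{n-1}]$ there exists $\rho_j>0$ independent of $h$ such that
$$P(\varphi_h\in D_j\mid\mathcal{F}_{h-m})\,I_{\Omega_{h-m}(\gamma,C)}\;\geq\;\rho_j\, I_{\Omega_{h-m}(\gamma,C)}\quad\text{a.s.}$$
The argument chains $m$ successive conditionings. On $\Omega_{h-m}(\gamma,C)$ the nominal value $f(\theta,\varphi_{h-m})+u_{h-m}$ lies in the bounded set $\mathcal{S}$ (this is exactly why the $\gamma$-truncation is included when $C_w=\infty$). By Lemma \ref{Sk}(i) some center $b_{jm,s}\in \mathcal{B}_{jm}$ lies within $C_w$ of this nominal value, so Assumption B1 yields $P(w_{h-m+1}\in B(b_{jm,s}-f-u,d)\mid\mathcal{F}_{h-m})\geq\rho$ uniformly. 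On that event $y_{h-m+1}\in D_{jm}$ \emph{and} $|y_{h-m+1}|\leq|b_{jm,s}|+d$ is uniformly bounded, so $\varphi_{h-m+1}$ remains bounded and the same one-step bound applies to $y_{h-m+2}\in D_{j(m-1)}$. Iterating $m$ times and using the tower property produces the claim with $\rho_j=\rho^m$. When $C_w=\infty$ each $\mathcal{B}_{jl}$ is a single point, so the density step collapses and Assumption B1(ii) supplies the positive-density estimate directly.

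Next, set $Y_h\triangleq I_{\{\varphi_h\in D_j\}}I_{\Omega_{h-m}(\gamma,C)}$, which is $[0,1]$-valued. Splitting the indices $\{1,\ldots,t\}$ into the $m$ arithmetic progressions modulo $m$, each of the sequences $(Y_{h_0+km}-E[Y_{h_0+km}\mid\mathcal{F}_{h_0+(k-1)m}])_{k\geq 1}$ is a bounded martingale-difference sequence with respect to $\{\mathcal{F}_{h_0+km}\}$. The standard strong law for bounded martingale differences then yields
$$\sum_{h=1}^{t}Y_h\;-\;\sum_{h=1}^{t}E[Y_h\mid\mathcal{F}_{h-m}]\;=\;o(\eta_t)\quad\text{a.s. on }\Omega_\eta.$$
Combined with the first step, $\sum_{h=1}^{t}E[Y_h\mid\mathcal{F}_{h-m}]\geq \rho_j\eta_t$, so for all sufficiently large $t$ the empirical count is at least $\tfrac12\rho_j\eta_t$ a.s. on $\Omega_\eta$. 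Taking $C_D\triangleq\tfrac12\min_{j\in[1,2^{n-1}]}\rho_j>0$ and ranging $j$ over the finite set $[1,2^{n-1}]$ gives \dref{varD}.

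The main obstacle I expect is the nested-conditioning step producing $\rho_j$: the density statement of Lemma \ref{Sk}(i) gives a center $b$ within $C_w$ only for \emph{bounded} nominal values, so one must carry the boundedness of $\varphi_{h-l+1}$ forward through the induction using the random event $\{y_{h-l+1}\in D_{j(m-l+1)}\}$ itself rather than from any a priori control. In the unbounded-noise regime this requires choosing $\gamma$ large enough that $\{\varphi_h\in D_j\}\subset\{\|\varphi_h\|\leq\gamma\}$, so that the seemingly non-$\mathcal{F}_{h-m}$-measurable truncation in $\Omega_{h-m}(\gamma,C)$ is absorbed harmlessly into the target event.
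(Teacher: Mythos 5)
Your proposal is correct and follows essentially the same route as the paper's own proof: a uniform one-step lower bound $P(\varphi_h\in D_j\mid\mathcal{F}_{h-m})I_{\Omega_{h-m}}\geq C_d^m I_{\Omega_{h-m}}$ obtained by chaining $m$ conditionings (with the boundedness of the nominal value carried forward through the events $\{y_{h-l+1}\in D_{j,m-l+1}\}$ in the $C_w=\infty$ case, exactly as the paper does via $\bigcap_{r=l+1}^m\Omega'_{h,r}$), combined with a strong law for the bounded martingale differences along the residue classes mod $m$. The subtlety you flag about absorbing the truncation when $C_w=\infty$ is precisely the point the paper handles with $N_{jl}\equiv 1$ and the constant $C_f$.
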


\begin{proof}
Let filtration $\{\mathcal{F}_{h}\}$ be defined by \dref{Ft}.  Fix  $j\in [1,2^{n-1}]$.  Observe that for each   $l\in [0,m-1]$, $\{I_{\{\varphi_{hm+l}\in D_{j}\}}-P( \varphi_{hm+l}\in D_{j}|\mathcal{F}_{(h-1)m+l} ),\mathcal{F}_{hm+l}\}_{h\geq 0}$ is a  martingale difference sequence, then for all sufficiently large $t$,
\begin{eqnarray}\label{sumIvar}
&&\sum\nolimits_{h=1}^t   I_{\Omega_{h-m}} \left(I_{\{\varphi_h\in D_{j}\}}-P( \varphi_h\in D_{j}|\mathcal{F}_{h-m} )  \right)\nonumber\\
&  =&
o\left( \sum\nolimits_{h=1}^t I^2_{\Omega_{h-m}} \right)=o(\eta_t)\quad \mbox{a.s. on  }    \Omega_\eta.
\end{eqnarray}
For $h\geq m$, we compute $ P( \varphi_h\in D_{j}|\mathcal{F}_{h-m} ) I_{\Omega_{h-m}} $   by the following two cases:\\
(i) $C_w<\infty$. In this case,  $f_{h-l}=(f(\theta, \varphi_{h-l})+u_{h-l})$ falls in $\mathcal{S}$ for all $ l\in [1,m]$ on set $\Omega_{h-m}$. So,     \dref{Sih} yields
\begin{eqnarray}\label{fbCw}
 \max_{l\in [1,m]}  \min_{s\in [1, N_{jl}]}    \|b_{jl,s}-f_{h-l}\|<C_w,\quad   \mbox{ on } \Omega_{h-m}.
\end{eqnarray}
For  $h\geq m$ and $l\in[1,m]$, denote
\begin{eqnarray*}
\Omega'_{h,l}&\triangleq&\{y_{h-l+1}\in\bigcup\nolimits_{ s\in [1, N_{jl}]} [b_{jl,s}-d,b_{jl,s}+d]\}\\
&=&\{ w_{h-l+1}\in \bigcup\nolimits_{ s\in [1, N_{jl}]}[b_{jl,s}-f_{h-l} -d,b_{jl,s}-f_{h-l}+d]\}.
\end{eqnarray*}
So, by Assumption B1 and \dref{fbCw},
there is a $C_d>0$ such that
\begin{eqnarray}\label{Cd}
E( I_{\Omega'_{h,l}}|\mathcal{F}_{h-l} )I_{\Omega_{h-m}}
 \geq \inf\nolimits_{z\in[-C_w, C_w]} P\{w_1\in (z-d,z]\}I_{\Omega_{h-m}}=     C_dI_{\Omega_{h-m}}
\end{eqnarray}
holds for all $h\geq m$ and  $l\in [1,m]$.   By virtue of \dref{Cd}, 
\begin{eqnarray}\label{CdI}
P( \varphi_h\in D_{j}|\mathcal{F}_{h-m} )I_{\Omega_{h-m}}
&=&E\left(\prod\nolimits_{l=1}^m I_{\Omega'_{h,l}}|\mathcal{F}_{h-m} \right)I_{\Omega_{h-m}}\nonumber\\
&=&E\left(  E( I_{\Omega'_{h,1}}|\mathcal{F}_{h-1} )I_{\Omega_{h-m}} \prod\nolimits_{l=2}^m I_{\Omega'_{h,l}}    |\mathcal{F}_{s-m}\right )\nonumber\\
   &\geq &C_d E\left(\prod\nolimits_{l=2}^m I_{\Omega'_{h,l}}    |\mathcal{F}_{s-m}\right )I_{\Omega_{h-m}} \nonumber\\
&\geq & \cdots\geq C^m_d I_{\Omega_{h-m}}.
\end{eqnarray}
(ii) $C_w=\infty$. Note that 
$\{D_{j}\}_{ j\in [1,2^{n-1}]}$ are bounded, then there is a $C_f>0$ such that
$$
\min_{l\in[1,m]}\min_{s\in [1,N_{jl}]}\|b_{jl,s}-f_{h-l}\| I_{\{\bigcap\nolimits_{r=l+1}^m\Omega'_{h,r}\cap \Omega_{h-m}\}} \leq C_f.
$$
Since $N_{jl}\equiv 1$,   by Assumption B1,    for any $h\geq m$ and $l\in [1,m]$,
\begin{eqnarray*}
&&E( I_{\Omega'_{h,l}}|\mathcal{F}_{h-l} )I_{\Omega_{h-m}} \prod\nolimits_{s=l+1}^m I_{\Omega'_{h,s}}  \nonumber\\
 &\geq& \inf_{z\in[-C_f, C_f]} P\{w_1\in (z-d,z]\} I_{\Omega_{h-m}}\prod\nolimits_{s=l+1}^m I_{\Omega'_{h,s}}=     C_d I_{\Omega_{h-m}}\prod\nolimits_{s=l+1}^m I_{\Omega'_{h,s}},\quad \mbox{a.s.},
\end{eqnarray*}
where $C_d$ is a positive number.   So, \dref{CdI} also holds for this case.

Combined with \dref{sumIvar}, both the  two cases  indicate that  for all  sufficiently large $t$,
\begin{eqnarray}\label{sumIvarcons}
&&\dfrac{\sum_{h=1}^t I_{\{\varphi_h\in D_{j}\}} I_{\Omega_{h-m}} }{\eta_t } \nonumber\\
    &\geq & \dfrac{\sum_{h=1}^t    P( \varphi_h\in D_{j}|\mathcal{F}_{h-m} ) I_{\Omega_{h-m}}  }{\eta_t }-\dfrac{C_d^m}{2}
   \geq \dfrac{C^m_d}{2}>0,\quad \mbox{a.s. on  }    \Omega_\eta.
\end{eqnarray}
Then, \dref{varD} follows from  \dref{sumIvarcons} by noting that $j$ is finite.
\end{proof}

Now, at time $t\geq 1$,  for any $k\in [1,n]$ and $ h=(h_1,h_2,\ldots,h_{2^{k-1}})\in \mathcal{H}^{t}_k $,   denote
$$
y^{(k)}_{h}\triangleq \mbox{col}\{\varphi_{h_1},\varphi_{h_2},\ldots, \varphi_{h_{2^{k-1}}}\}    \quad \mbox{with}\quad \varphi_{h_i}=(y_{h_i},\ldots,y_{h_i-m+1})^T.$$
Take $\gamma$  sufficiently large that for each $h\geq 1$,
\begin{eqnarray}\label{IC'}
\left\{
\begin{array}{ll}
I_{{\Omega_{h}}(\gamma,C)}=I_{\Omega_{h-m} },&C_w<\infty\\
I_{{\Omega_{h}}(\gamma,C)}\geq \max_{ j\in [1,2^{n-1}]}I_{\{ \varphi_{h} \in D_{j} \} }I_{\Omega_{h-m}},& C_w=\infty
\end{array}.
\right.
\end{eqnarray}
For $t\geq 1$, let $\{\theta_{t,h}\}_{ h\in [1,t]}$ be a sequence of random variables   taking values in $\Theta$ and
define $\vartheta_{t,h}, h\in \mathcal{H}^t_k, k\in [1,n]$ by $\vartheta_{t,h}\triangleq\mbox{col}\{\theta_{t,h_1},  \theta_{t,h_2}, \ldots, \theta_{t,h_{2^{k-1}}}  \}$.

\begin{lemma}\label{gks>}
Under the conditions of Theorem \ref{the}, there are some   $C_g, C_{g,\eta}>0$ such that  for all  $k\in [1,n],  s\in [k,n]$ and all sufficiently large $t$,
\begin{eqnarray}\label{numgk}
\sum\nolimits_{h\in  \mathcal{H}^{t}_k} I_{\{|g^k_s(\vartheta_{th},y^{(k)}_{h})|\geq C_g\}}\prod\nolimits_{j=1}^{2^{k-1}}  I_{\Omega_{h_j}(\gamma,C)}
\geq C_{g,\eta}\eta_{t}^{2^{k-1}}, \quad  \mbox{a.s. on  }   \Omega_\eta.\quad
\end{eqnarray}
\end{lemma}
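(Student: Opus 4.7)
My plan is to proceed by downward induction on $k$, starting from $k=n$ and descending to $k=1$; at each level $k$ the claim would be established simultaneously for every $s\in[k,n]$. This direction is natural because Lemma \ref{Sk}(ii) delivers a direct pointwise lower bound on $|g^n_n|$, while the bilinear recursion \dref{gk} transfers lower bounds at level $k+1$ down to level $k$.

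For the base case ($k=n$, $s=n$), Lemma \ref{Sk}(ii) supplies a constant $c_0>0$ with $|g^n_n(x,y)|\geq c_0$ for all $x\in\Theta^{2^{n-1}}$ and $y\in\mathcal{D}=\prod_{j=1}^{2^{n-1}}D_j$. Setting $T_j^t\triangleq\{h\in[1,t]:\varphi_h\in D_j,\ \Omega_h(\gamma,C)\text{ holds}\}$, Lemma \ref{yDeta} combined with \dref{IC'} yields $|T_j^t|\geq C_D\,\eta_t$ on $\Omega_\eta$ for all sufficiently large $t$. Because each $g^{k+1}_j$ in \dref{gk} is antisymmetric under swapping its two $k$-th level arguments, $|g^n_n|$ is invariant under the automorphism group $G_n$ of the balanced binary tree with $2^{n-1}$ leaves (of size $2^{2^{n-1}-1}$), so $|g^n_n|\geq c_0$ on the full $G_n$-orbit of $\mathcal{D}$. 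A combinatorial count of tuples $h\in\mathcal{H}^t_n$ whose canonically ordered leaves fall into the sets $\{T_j^t\}$ up to a $G_n$-compatible relabeling then supplies the required $C_{g,\eta}\,\eta_t^{2^{n-1}}$ lower bound.

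For the inductive step ($k+1\to k$), assume the conclusion at level $k+1$ for all $s\in[k+1,n]$. The recursion gives
\begin{equation*}
 g^{k+1}_s(\vartheta_{t,h},y^{(k+1)}_h)=g^k_k(\vartheta_{t,p},y^{(k)}_p)\,g^k_s(\vartheta_{t,q},y^{(k)}_q)-g^k_k(\vartheta_{t,q},y^{(k)}_q)\,g^k_s(\vartheta_{t,p},y^{(k)}_p)
\end{equation*}
for $h=(p,q)\in\mathcal{H}^t_{k+1}$. On $\bigcap_j\Omega_{h_j}(\gamma,C)$, continuity of the partial derivatives of $f$ together with the $\gamma$- or $C$-boundedness of $\varphi$ renders each $|g^k_r|$ uniformly bounded by some $B$. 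Whenever $|g^{k+1}_s(h)|\geq C_{k+1}$, one of the products $|g^k_k(p)|\,|g^k_s(q)|$ or $|g^k_k(q)|\,|g^k_s(p)|$ exceeds $C_{k+1}/2$, forcing each factor to exceed $c^*=C_{k+1}/(2B)$. Summing this extraction over the $\gtrsim\eta_t^{2^k}$ pairs supplied by the hypothesis at each $s\in[k+1,n]$, and exploiting the hypothesis for several values of $s$ simultaneously to prevent mass collapse in one factor, one produces counts of order $\eta_t^{2^{k-1}}$ for $|\{p\in\mathcal{H}^t_k:|g^k_r(\vartheta_{t,p},y^{(k)}_p)|\geq c_r\}|$, for each $r\in[k,n]$.

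The hard part will be the separation argument at the inductive step: the bilinear extraction naturally produces only joint lower bounds of the form $|P_k|\cdot|P_s|\gtrsim\eta_t^{2^k}$, and splitting these into the individual bounds $|P_k|,|P_s|\gtrsim\eta_t^{2^{k-1}}$ required for the induction requires a balancing argument that draws on the hypothesis for several $s$ simultaneously, together with the stability-induced near-linear growth of $\eta_t$ in $t$. A secondary bookkeeping challenge lies in the base case, where the labeled sets $D_j$ must be matched to the canonically ordered leaves of elements of $\mathcal{H}^t_n$ through the $G_n$-symmetry of $|g^n_n|$; the counting is nontrivial precisely because the $D_j$'s need not be identical across $j$.
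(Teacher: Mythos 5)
Your plan is essentially the paper's proof: downward induction on $k$ starting from $k=n$, with the base case supplied by Lemma \ref{Sk}(ii) together with Lemma \ref{yDeta} and \dref{IC'}, and the inductive step driven by the recursion \dref{gk}. The step you single out as ``the hard part'' --- separating the product bound $|P_k|\cdot|P_s|\gtrsim\eta_t^{2^{k}}$ into individual bounds --- is in fact immediate and needs neither a balancing argument over several $s$ nor near-linear growth of $\eta_t$: each factor is trivially bounded above, since $\sum_{q\in\mathcal{H}^t_k}I_{\{|g^k_s(\vartheta_{tq},y^{(k)}_q)|\geq c^*\}}\prod_{j}I_{\Omega_{q_j}(\gamma,C)}\leq\eta_t^{2^{k-1}}$, so the product lower bound forces the other factor to be $\gtrsim\eta_t^{2^{k-1}}$. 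The paper implements exactly this division in a slightly different dress: it bounds one factor of each product in \dref{gk} by the uniform constant $\bar C_g$ to obtain the additive inequality $I_{\{|g^k_s(h)|\geq C^g_k\}}\leq I_{\{|g^{k-1}_r(p)|\geq C^g_{k-1}\}}+I_{\{|g^{k-1}_r(q)|\geq C^g_{k-1}\}}$ for $r\in\{k-1,s\}$ with $C^g_{k-1}=C^g_k/(2\bar C_g)$, sums over pairs, factorizes the right-hand side into a product of two sums, and divides by the trivial bound $\eta_t^{2^{k-2}}$ on one of them; the different values of $s$ serve only to propagate the bound for each $g^{k-1}_s$ separately, which your scheme also achieves. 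Your secondary concern about the base case is legitimate: the paper passes from Lemma \ref{yDeta} directly to $\sum_{h\in\mathcal{H}^t_n}\prod_j I_{\{\varphi_{h_j}\in D_j\}}I_{\Omega_{h_j}(\gamma,C)}\geq\frac{1}{2}(C_D\eta_t)^{2^{n-1}}$ without addressing the interaction between the $\prec$-ordering constraints in $\mathcal{H}^t_n$ and the fact that the sets $D_j$ differ with $j$, so on that point your proposal is more careful than the source --- though you would still need to carry the symmetry and counting argument out to completion.
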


\begin{proof} First,
in view of \dref{varD} and \dref{IC'}, for all sufficiently large $t$,
\begin{eqnarray*}
&&\sum\nolimits_{h\in  \mathcal{H}^{t}_n} I_{\{ y^{(n)}_{h} \in \mathcal{D} \} }\prod\nolimits_{j=1}^{2^{n-1}}  I_{\Omega_{h_j}(\gamma,C)}\\
&=& \sum\nolimits_{h\in  \mathcal{H}^{t}_n} \prod\nolimits_{j=1}^{2^{n-1}} I_{\{ \varphi_{h_j} \in D_j \} } I_{\Omega_{h_j}(\gamma,C)}\\
&\geq &\sum\nolimits_{h\in {\mathcal{H}}^{t}_n}\prod\nolimits_{j=1}^{2^{n-1}} I_{\{ \varphi_{h_j} \in D_j \} } I_{\Omega_{h_j-m}}
\geq  \frac{(C_D\eta_{t})^{2^{n-1}}}{2},\qquad \mbox{a.s. on  }   \Omega_\eta.
\end{eqnarray*}
Moreover, considering Lemma \ref{Sk}, let $C^g_{n}\triangleq\min_{x\in\Theta^{2^{n-1}}} \min_{y\in \mathcal{D}} |g^n_n(x,y)|>0,$ then
\begin{eqnarray}\label{numgn}
&&\sum\nolimits_{h\in  \mathcal{H}^{t}_n} I_{\{|g^n_n(\vartheta_{th},y^{(n)}_{h})|\geq C^g_{n}\}}\prod\nolimits_{j=1}^{2^{n-1}}  I_{\Omega_{h_j}(\gamma,C)} \nonumber\\
&\geq& \sum\nolimits_{h\in  \mathcal{H}^{t}_n} I_{\{ y^{(n)}_{h} \in \mathcal{D} \} }\prod\nolimits_{j=1}^{2^{n-1}}  I_{\Omega_{h_j}(\gamma,C)}
\geq \frac{(C_D\eta_{t})^{2^{n-1}}}{2},\qquad  \mbox{a.s. on  }   \Omega_\eta,
\end{eqnarray}
whenever $t$ is sufficiently large. 

Now, recursively define
  $C^g_{k-1}\triangleq C^g_{k}/(2\bar C_g), k=n,\ldots,2$, where
 for $\overline { B(0,\gamma) } \subset \mathbb{R}^m$,
 $$\bar C_g\triangleq \max_{1\leq k\leq s\leq n} \max_{x\in \Theta^{2^{k-1}}}\max_{y\in \left(\overline { B(0,\gamma) }\right)^{2^{k-1}}  }|g^k_s(x,y)|.$$
 Because of \dref{numgn},  suppose there is an integer  $k\in [2,n]$ such that for all    $s\in [k,n]$ and all sufficiently large $t$,
\begin{eqnarray}\label{ingk}
\sum\nolimits_{h\in  \mathcal{H}^{t}_k} I_{\{|g^k_s(\vartheta_{th},y^{(k)}_{h})|\geq C_g\}}\prod\nolimits_{j=1}^{2^{k-1}}  I_{\Omega_{h_j}(\gamma,C)}
\geq \frac{C_D^{2^{n-1}}}{ 2^{n+1-k}}\eta_{t}^{2^{k-1}},\quad   \mbox{a.s. on  }   \Omega_\eta.
\end{eqnarray}
Let  $s\in [k,n]$ and $h=(p,q)$ with   $p=(p_j)_{j=1}^{2^{k-2}}, q=(q_j)_{j=1}^{2^{k-2}}\in \mathcal{H}^t_{k-1}$. By \dref{gk}, on set $ (\bigcap\nolimits_{j=1}^{2^{k-2}}\Omega_{p_j}(\gamma,C))  \cap (\bigcap\nolimits_{j=1}^{2^{k-2}}\Omega_{q_j}(\gamma,C) )$, it is evident that for $r=k-1$ and $s$,
 \begin{eqnarray*}
|g^k_s(\vartheta_{th},y^{(n)}_{h})|\leq \bar C_g( |g^{k-1}_{r}(\vartheta_{tp},y^{({k-1})}_{p})|+|g^{k-1}_{r}(\vartheta_{tq},y^{({k-1})}_{q})| ).
\end{eqnarray*}
 As a result, both $r=k-1$ and $s$ lead to
\begin{eqnarray*}
&&\sum\nolimits_{h\in  \mathcal{H}^{t}_k} I_{\{|g^k_s(\vartheta_{th},y^{(k)}_{h})|\geq C^g_{k}\}}\prod\nolimits_{j=1}^{2^{k-1}}  I_{\Omega_{h_j}(\gamma,C)} \nonumber\\
&\leq&\sum\nolimits_{p,q\in  \mathcal{H}^{t}_{k-1}}\left (I_{\{|g^{k-1}_{r}(\vartheta_{tp},y^{({k-1})}_{p})|\geq C^g_{k-1}\}} + I_{\{|g^{k-1}_{r}(\vartheta_{tq},y^{({k-1})}_{q})|\geq C^g_{k-1}\}}\right) \\
&&\cdot\prod\nolimits_{j=1}^{2^{k-2}} I_{\Omega_{p_j}(\gamma,C)}\prod\nolimits_{j=1}^{2^{k-2}} I_{\Omega_{q_j}(\gamma,C)}\\
&\leq& 2\left( \sum\nolimits_{p\in  \mathcal{H}^{t}_{k-1}}I_{\{|g^{k-1}_{r}\vartheta_{tp},y^{({k-1})}_{p})|\geq C^g_{k-1}\}}\prod\nolimits_{j=1}^{2^{k-2}} I_{\Omega_{p_j}(\gamma,C)} \right )\left( \sum\nolimits_{q\in  \mathcal{H}^{t}_{k-1}} I_{\Omega_{q_j}(\gamma,C)} \right ),
\end{eqnarray*}
or equivalently,      by  \dref{ingk} and $\sum\nolimits_{q\in  \mathcal{H}^{t}_{k-1}} I_{\Omega_{q_j}(\gamma,C)} \leq \eta^{2^{k-2}}_t$,
\begin{eqnarray*}
&&  \sum\nolimits_{p\in  \mathcal{H}^{t}_{k-1}}I_{\{|g^{k-1}_{r}\vartheta_{tp},y^{({k-1})}_{p})|\geq C^g_{k-1}\}}\prod\nolimits_{j=1}^{2^{k-2}} I_{\Omega_{p_j}(\gamma,C)} \\
&\geq&\frac{\sum\nolimits_{h\in  \mathcal{H}^{t}_k} I_{\{|g^k_s(\vartheta_{th},y^{(k)}_{h})|\geq C^g_{k}\}}\prod\nolimits_{j=1}^{2^{k-1}}  I_{\Omega_{h_j}(\gamma,C)}}
{2\left( \sum\nolimits_{q\in  \mathcal{H}^{t}_{k-1}} I_{\Omega_{q_j}(\gamma,C)} \right ) }\\
&\geq & \frac{C^{2^{n-1}}_D\eta_{t}^{2^{k-1}}/2^{n+1-k}}
{2\eta^{2^{k-2}}_t }=\frac{C^{2^{n-1}}_D}{2^{n+2-k}}\eta^{2^{k-2}}_t, \qquad  \mbox{a.s. on  }   \Omega_\eta.
\end{eqnarray*}
This implies that \dref{ingk} is also true for $k-1$. The lemma is thus proved by taking $C_g=\min_{1\leq i\leq n}C^g_i$ and $C_{g,\eta}=\frac{C_D^{2^{n-1}}}{ 2^{n}}$.
\end{proof}




\begin{lemma}\label{sumuv>t}
For $t\geq 1$, let $\{\theta_{t,h}\}_{ h\in [1,t]}$ be a sequence of random variables   taking values in $\Theta$. In   Lemmas \ref{sumaa} and \ref{detsumaa},
set
\begin{equation}\label{alf}
\a_h=\frac{\partial f(x,\varphi_h)}{\partial x}\Big|_{x=\theta_{t,h}}I_{{\Omega_{h}}(\gamma,C)},\quad h\in [1,t],
\end{equation}
where $\gamma$ is a positive number. 
Then, under the conditions of Theorem \ref{the}, there is a $C_P>0$ independent of $t$ such that for all  be sufficiently large $t$,
\begin{equation}\label{sumu>t}
\sum\nolimits_{h=1}^t \nu^2_{h,s}(1)\geq C_P \eta_t, \,\, \forall s\in [1,n],   \quad     \mbox{a.s. on  }    \Omega_\eta.
\end{equation}
In addition, taking $\gamma$  appropriately large, there is a number $\epsilon>0$ such that for all  be sufficiently large $t$,   \dref{munue} holds a.s. on  $\Omega_\eta$ for each $k\in [1,n-1]$  and $s\in [k+1, n]$.
\end{lemma}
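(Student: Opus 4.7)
The plan is to recognize that both claims in the lemma reduce directly to Lemma~\ref{gks>} once one identifies the quantities $\mu_h(k)$ and $\nu_{h,s}(k)$ produced by the algebra of Lemma~\ref{sumaa} with values of the Jacobian-type functions $g^k_s(\vartheta_{t,h}, y^{(k)}_h)$ from \dref{gk}. Specifically, with $\a_h$ given by \dref{alf}, I would prove by induction on $k$ that
\[
\mu_h(k) = g^k_k(\vartheta_{t,h}, y^{(k)}_h)\prod_{j=1}^{2^{k-1}}I_{\Omega_{h_j}(\gamma,C)},\qquad \nu_{h,s}(k) = g^k_s(\vartheta_{t,h}, y^{(k)}_h)\prod_{j=1}^{2^{k-1}}I_{\Omega_{h_j}(\gamma,C)}
\]
for $h=(h_1,\ldots,h_{2^{k-1}})\in\mathcal{H}^t_k$ and $s\in [k,n]$. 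The base case $k=1$ is the definition of $g^1_s$, and the inductive step is immediate from \dref{muk+1}, \dref{nuks}, and the recursion \dref{gk} for $g^{k+1}_s$; the sign ambiguity caused by the ordering $p\prec q$ in $\mathcal{H}^t_{k+1}$ is harmless because only squares enter the statements.

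With this identification in hand, I would first fix $\gamma$ large enough that on $\Omega_h(\gamma,C)$ one has $\|\varphi_h\|\leq \gamma$ (automatic for $C_w=\infty$; for $C_w<\infty$ it follows by propagating $\|\varphi_{h-m}\|\leq C$ through the bounded recursion \dref{ymodel} using $|w_t|\leq C_w$, $|u_t|\leq C_u$ and continuity of $f$), and also large enough that \dref{IC'} holds, so that Lemma~\ref{gks>} applies. This bound on $\gamma$ also guarantees $|g^k_s(\vartheta_{t,h}, y^{(k)}_h)|\leq \bar C_g$ on the indicator set, where $\bar C_g$ is the constant from the proof of Lemma~\ref{gks>}. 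Part (i) is then immediate: Lemma~\ref{gks>} at level $k=1$ yields, for every $s\in[1,n]$,
\[
\sum_{h=1}^t \nu^2_{h,s}(1) \;\geq\; C_g^2\sum_{h=1}^t I_{\{|g^1_s(\theta_{t,h},\varphi_h)|\geq C_g\}}I_{\Omega_h(\gamma,C)} \;\geq\; C_g^2\,C_{g,\eta}\,\eta_t\qquad\text{a.s.\ on }\Omega_\eta,
\]
so $C_P=C_g^2\,C_{g,\eta}$ works.

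For part (ii), the identity \dref{nuks} together with antisymmetry gives, for each $k\in [1,n-1]$ and $s\in [k+1,n]$,
\[
\sum_{p,q\in\mathcal{H}^t_k}(\mu_p(k)\nu_{q,s}(k)-\mu_q(k)\nu_{p,s}(k))^2 \;=\; 2\sum_{h\in\mathcal{H}^t_{k+1}}\nu^2_{h,s}(k+1).
\]
Applying Lemma~\ref{gks>} at level $k+1$ with the same $s$, the right-hand side is at least $2C_g^2 C_{g,\eta}\eta_t^{2^k}$ on $\Omega_\eta$. Conversely, the boundedness of $|g^k_\cdot|$ by $\bar C_g$ on the indicator set and the fact that at most $\eta_t^{2^{k-1}}$ tuples $p\in\mathcal{H}^t_k$ carry a nonzero indicator give
\[
\Bigl(\sum_{p\in\mathcal{H}^t_k}\mu_p^2(k)\Bigr)\Bigl(\sum_{q\in\mathcal{H}^t_k}\nu_{q,s}^2(k)\Bigr) \;\leq\; \bar C_g^{\,4}\,\eta_t^{2^k}.
\]
Thus choosing $\epsilon \leq C_g^2 C_{g,\eta}/\bar C_g^{\,4}$ (a value that is uniform over the finite range of $k$ and $s$) establishes \dref{munue} a.s.\ on $\Omega_\eta$. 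The main obstacle is the bookkeeping in the inductive identification of $\mu_h(k), \nu_{h,s}(k)$ with $g^k_k, g^k_s$: the recursions in Lemma~\ref{sumaa} are indexed through the implicit function symbols $\zeta_{h,k}$ whereas the $g^k_s$ are indexed by the differentiation/antisymmetrization pattern \dref{gk}, and one has to check carefully that both recursions produce the same multilinear combinations of the entries of $\a_h$ modulo sign. Once this is verified, Lemma~\ref{gks>} supplies the rest of the proof with essentially no further work.
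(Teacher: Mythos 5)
Your proposal is correct and follows essentially the same route as the paper: the identification $\mu_h(k)=g^k_k(\vartheta_{t,h},y^{(k)}_h)\prod_j I_{\Omega_{h_j}(\gamma,C)}$, $\nu_{h,s}(k)=g^k_s(\vartheta_{t,h},y^{(k)}_h)\prod_j I_{\Omega_{h_j}(\gamma,C)}$ is exactly the paper's key display, part (i) is obtained from Lemma \ref{gks>} at level $k=1$ with the same constant $C_P=C_g^2C_{g,\eta}$, and part (ii) pairs the same lower bound via Lemma \ref{gks>} at level $k+1$ with the same factorized upper bound $\bigl(\sum_p\mu_p^2\bigr)\bigl(\sum_q\nu_{q,s}^2\bigr)\leq \bar C_g^{\,4}\eta_t^{2^k}$ (the paper writes this constant as $C'_g$). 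The only differences are cosmetic bookkeeping with the factor of $2$ from the antisymmetry, which you handle consistently.
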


\begin{proof}
First,
by \dref{gk}, \dref{muk+1}, \dref{nuks} and \dref{alf}, it is easy to verify that for each $k\in [1,n-1]$,   $h=(p,q)$, $p,q\in  \mathcal{H}^t_k$ with $p=(p_j)_{j=1}^{2^{k-1}}, q=(q_j)_{j=1}^{2^{k-1}} $ and $s\in[k+1,n]$,
\begin{eqnarray}\label{uvg}
\left\{
\begin{array}{ll}
\mu_p(k)=g_{k}^{k}(\vartheta_{tp}, y^{(k)}_{p})\prod\nolimits_{j=1}^{2^{k-1}}  I_{\Omega_{p_j}(\gamma,C)}\\
\nu_{q,s}(k)=g_{s}^{k}(\vartheta_{tq},  y^{(k)}_{q})\prod\nolimits_{j=1}^{2^{k-1}}  I_{\Omega_{q_j}(\gamma,C)}\\
g^{k+1}_{s}(\vartheta_{th},y^{(k+1)}_h)\prod\nolimits_{j=1}^{2^{k}}  I_{\Omega_{h_j}(\gamma,C)}=\mu_p(k)\nu_{q,s}(k)-\mu_q(k)\nu_{p,s}(k)
\end{array}.
\right.
\end{eqnarray}
As a consequence,  by Lemma \ref{gks>} and \dref{uvg}, for each $s\in [1,n]$,
\begin{eqnarray*}
 \sum\nolimits_{h\in [1,t]} \nu^2_{h,s}(1)
&= & \sum\nolimits_{h\in [1,t]}  ( g^{1}_{s}(\vartheta_{th},\varphi_h))^2I_{\Omega_{h}(\gamma,C)} \nonumber\\
&\geq&  C^2_g  \sum\nolimits_{h\in [1,t]}   I_{\{|g^1_s(\vartheta_{th},y^{(1)}_{h})|\geq C_g\}}I_{\Omega_{h}(\gamma,C)} \nonumber\\
&\geq& C^2_gC_{g,\eta}  \eta_{t},\qquad \mbox{a.s. on  }     \Omega_\eta.
\end{eqnarray*}
 Hence,  \dref{sumu>t} holds by letting $C_P=C^2_gC_{g,\eta} $. Furthermore, for each $k\in [1,n-1]$,
\begin{eqnarray}\label{uv-vu}
&& \sum\nolimits_{p,q\in \mathcal{H}^t_{k}} ( \mu_p(k)\nu_{q,s}(k)-\mu_q(k)\nu_{p,s}(k))^2\nonumber\\
&\geq& \sum\nolimits_{h\in  \mathcal{H}^{t}_{k+1}} ( g^{k+1}_{s}(\vartheta_{th},y^{(k+1)}_h))^2I_{\{  |g^{k+1}_{s}(\vartheta_{th},y^{(k+1)}_h)| \geq C_g\} } \prod\nolimits_{j=1}^{2^{k}}  I_{\Omega_{h_j}(\gamma,C)}\nonumber\\
&\geq&  C^2_g C_{g,\eta}  \eta^{2^{k}}_{t},\qquad  \forall s\in [k+1,n], \qquad            \mbox{a.s. on  }    \Omega_\eta.
\end{eqnarray}
On the other hand,  for every $s \in [k+1,n]$,   
 \begin{eqnarray*}
&&\sum\nolimits_{p,q\in \mathcal{H}^t_k}  \mu^2_p(k)\nu^2_{q,s}(k)\\
&=&\sum\nolimits_{p,q\in \mathcal{H}^t_k}(g_{k}^{(k)}(\vartheta_{tp},y^{(k)}_{p})g_{s}^{(k)}(\vartheta_{tq},y^{(k)}_{q}))^2\prod\nolimits_{j=1}^{2^{k-1}}  I_{\Omega_{p_j}(\gamma,C)} I_{\Omega_{q_j}(\gamma,C)} \\
&\leq&C'_g \sum\nolimits_{p,q\in \mathcal{H}^t_k}\prod\nolimits_{j=1}^{2^{k-1}}  I_{\Omega_{p_{j-m}}} I_{\Omega_{q_{j-m}}}\\
&=&  C'_g \sum\nolimits_{h\in \mathcal{H}^t_{k+1}}\prod\nolimits_{j=1}^{2^{k}}  I_{\Omega_{h_{j-m}}}   \leq C'_g \eta^{2^k}_{t},
\end{eqnarray*}
where 
$$C'_g\triangleq \max_{k\in[1,n-1],s\in [k+1,n]}\max_{x\in \Theta^{2^{k-1}}}\max_{\max\{ \|z\|,\|z'\|\}\leq \sqrt{ 2^{k-1}  }\gamma }(g_{k}^{(k)}(x,z)g_{s}^{(k)}(x,z'))^2.$$
This with \dref{uv-vu}
 completes the proof by letting $\epsilon=(C^2_g C_{g,\eta} )/(2C'_g)$ in \dref{munue}.
\end{proof}

Let $t\geq 1$. For each $i\in  [1,t]$, denote $\phi_{i}(x)\triangleq \frac{\partial f(x,\varphi_i)}{\partial x}$ and  define
\begin{eqnarray*}
\left\{
\begin{array}{ll}
P^{-1}_{t+1}(x)\triangleq \sum_{i=1}^{t}  \phi_{i}(x_i) \phi^T_{i}(x_i)I_{\Omega_{i}(\gamma,C)}\\
r_t\triangleq\sum_{i=1}^t \max_{x\in\Theta} \|\phi_i(x)\|^2 I_{\Omega_{i}(\gamma,C)}
\end{array},
\right.
\end{eqnarray*}
where  $x=\mbox{col}\{x_{1},\ldots,x_{t}\},x_i\in\Theta$. 
The next lemma is straightforward.
\begin{lemma}\label{minlambda}
Under the conditions of Theorem \ref{the}, let $\theta_{t}\triangleq\mbox{col}\{\theta_{t,1},\ldots,\theta_{t,t}\}$, where    $\{\theta_{t,h}, h\in [1,t]\}$ is a sequence of random variables   taking values in $\Theta$. Then,\\
(i) for all sufficiently large $t$ and
$\gamma$,     there is a random positive  number
 $C_1$ such that
 \begin{eqnarray}\label{PC1}
\lambda_{\min}\left(P^{-1}_{t+1}(\theta_t) \right)\geq  C_1 \eta_t,\quad \mbox{a.s. on  }    \Omega_\eta;
\end{eqnarray}
 (ii)
let $C_2=\max_{x\in \Theta,\|z\|\leq \gamma}\| \frac{\partial f(x,z)}{\partial x}\|^2$ with $\gamma>0$ sufficiently large, then
\begin{eqnarray}\label{rC2}
r_t\leq C_2 \eta_t,\quad \forall t>1.
\end{eqnarray}
\end{lemma}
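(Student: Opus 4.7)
The plan is to derive both parts of Lemma \ref{minlambda} as immediate corollaries of the work already done in Lemmas \ref{detsumaa} and \ref{sumuv>t}, together with a one-line continuity argument on the regressors.

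For part (i), I would set $\a_h \triangleq \phi_h(\theta_{t,h})I_{\Omega_h(\gamma,C)}$ so that $P_{t+1}^{-1}(\theta_t)=\sum_{h=1}^t \a_h\a_h^T$, which is exactly the object to which Lemma \ref{detsumaa} applies. Lemma \ref{sumuv>t} (taken with this same choice of $\a_h$) delivers two things for all sufficiently large $t$, almost surely on $\Omega_\eta$: first, the lower bound $\sum_{h=1}^t \nu_{h,s}^2(1)=\sum_{h=1}^t a_{h,s}^2 \geq C_P\eta_t$ for every $s\in[1,n]$, which in particular makes the non-degeneracy hypothesis $\sum_{h=1}^t a_{h,j}^2\neq 0$ of Lemma \ref{sumaa} hold; second, the inequality \dref{munue} with a fixed $\epsilon>0$ for every $k\in[1,n-1]$ and $s\in[k+1,n]$. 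Plugging these two ingredients into Lemma \ref{detsumaa} then yields
\[
\lambda_{\min}\!\left(P_{t+1}^{-1}(\theta_t)\right)\;\geq\;\frac{\epsilon^{n-1}}{n}\,\min_{j\in[1,n]}\sum_{h=1}^t a_{h,j}^2 \;\geq\; \frac{\epsilon^{n-1}C_P}{n}\,\eta_t,
\]
so choosing $C_1=\epsilon^{n-1}C_P/n$ closes part (i).

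For part (ii), the point is simply that on the event $\Omega_i(\gamma,C)$, the regressor $\varphi_i$ is bounded by $\gamma$ once $\gamma$ is chosen large enough. When $C_w=\infty$ this is immediate from the definition \dref{Omegabar}; when $C_w<\infty$, the event $\Omega_i(\gamma,C)=\Omega_{i-m}$ controls $\varphi_{i-m}$, and iterating \dref{ymodel} $m$ times using continuity of $f$ on $\Theta\times\overline{B(0,C)}$, $\|u_t\|\leq C_u$, and $\|w_t\|\leq C_w$ produces a deterministic bound on $\|\varphi_i\|$; taking $\gamma$ at least this bound gives $\|\varphi_i\|\leq\gamma$ on $\Omega_i(\gamma,C)$ in either case. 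Hence $\max_{x\in\Theta}\|\phi_i(x)\|^2\leq C_2$ on $\Omega_i(\gamma,C)$, and summing over $i$ gives $r_t\leq C_2\eta_t$.

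I do not expect a genuine obstacle: the nontrivial work — the combinatorial identity of Lemma \ref{sumaa}, the eigenvalue reduction of Lemma \ref{detsumaa}, and the excitation estimate of Lemma \ref{sumuv>t} — has already been done. The only mild care required is to keep track of which events are relevant (working on $\Omega_\eta$ for the lower bound in (i)) and to pick $\gamma$ uniformly large enough in the $C_w<\infty$ case so that the upper bound in (ii) is valid pathwise.
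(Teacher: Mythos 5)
Your proposal is correct and follows essentially the same route as the paper: part (i) is obtained by identifying $P^{-1}_{t+1}(\theta_t)$ with $\sum_{h}\a_h\a_h^T$ for the choice \dref{alf}, feeding the excitation bound \dref{sumu>t} and the inequality \dref{munue} from Lemma \ref{sumuv>t} into Lemma \ref{detsumaa}, and taking $C_1=\epsilon^{n-1}C_P/n$; part (ii) follows from bounding $\|\varphi_i\|$ by $\gamma$ on $\Omega_i(\gamma,C)$ (which the paper asserts ``without loss of generality'' in the $C_w<\infty$ case and you justify by iterating \dref{ymodel}). No gap.
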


\begin{proof}
Note that $P^{-1}_{t+1}(\theta_t)\geq \sum_{i=1}^t \a_i\a^T_i$, where $\a_i$ is defined by \dref{alf}.  In view of Lemma \ref{sumuv>t}, there is a number $\epsilon>0$ such that \dref{munue} holds  almost surely on  $\Omega_\eta$ for each $k\in [1,n-1]$ and $s\in [k+1,n]$, and hence Lemma \ref{detsumaa} yields
\begin{eqnarray*}
\lambda_{\min}\left(P^{-1}_{t+1}(\theta_t)\right) &\geq& (\epsilon^{n-1}/n)  \min\nolimits_{ s\in [1,n]}( \sum\nolimits_{h=1}^t  \nu^2_{h,s}(1)) \\
&\geq &\epsilon^{n-1}C_p\eta_t/n,\qquad \mbox{a.s. on  }    \Omega_\eta,
\end{eqnarray*}
where \dref{PC1} follows directly from \dref{sumu>t} in Lemma \ref{sumuv>t}.

Next, we show \dref{rC2}.
By \dref{Omegabar}, if  $C_w=\infty $,  it is clear that
$$
r_t\leq  C_2  \sum\nolimits_{i=1}^t  I_{\Omega_{i}(\gamma,C)}\leq  C_2  \sum\nolimits_{i=1}^t  I_{\Omega_{i-m}} = C_2   \eta_t.
$$
When $C_w<\infty$, without loss of generality, assume  $\sup_{i\geq 1}\|\varphi_i\|I_{ \Omega_{i-m} }<\gamma$. Hence \dref{rC2} follows as well.
\end{proof}

\begin{lemma}\label{sumw-sig}
Under  Assumption B1, for any  $ \varepsilon\in (0,1)$,
\begin{eqnarray}\label{sumIw<}
\left|\sum\nolimits_{i=1}^{t}I_{\Omega_i(\gamma,C)}(  w^2_{i+1}-\sigma^2_w)\right|
\leq  (1+\varepsilon)\bar{\sigma}_w  \sqrt{2\eta_{t}\log \log\eta_{t}}
,\quad \mbox{a.s. on  }    \Omega_\eta,
\end{eqnarray}
\end{lemma}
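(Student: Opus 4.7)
The strategy is to recognize $X_i \triangleq I_{\Omega_i(\gamma,C)}(w_{i+1}^2-\sigma_w^2)$ as a martingale difference sequence with respect to $\{\mathcal{F}_{i+1}\}$ and then invoke the upper half of the law of the iterated logarithm for martingales.

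First I would observe that $\varphi_i=(y_i,\ldots,y_{i-m+1})^T$ is $\mathcal{F}_i$-measurable (with $\mathcal{F}_i$ as in \dref{Ft}), hence so is the indicator $I_{\Omega_i(\gamma,C)}$, while $w_{i+1}$ is independent of $\mathcal{F}_i$ by Assumption B1. This yields
$$E[X_i\mid\mathcal{F}_i]=I_{\Omega_i(\gamma,C)}\,E[w_{i+1}^2-\sigma_w^2]=0,\qquad E[X_i^2\mid\mathcal{F}_i]=\bar{\sigma}_w^2\,I_{\Omega_i(\gamma,C)},$$
so that the conditional quadratic variation is exactly $V_t=\bar{\sigma}_w^2\eta_t$, which diverges a.s.\ on $\Omega_\eta$ by the very definition of this event.

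Second, Assumption B1 provides $E|w_1|^\kappa<\infty$ with $\kappa>4$, and therefore $\sup_i E[|X_i|^{\kappa/2}\mid\mathcal{F}_i]\leq E|w_1^2-\sigma_w^2|^{\kappa/2}<\infty$ with $\kappa/2>2$. This uniform $(2+\delta)$-moment control on the conditional distributions, combined with $V_t\to\infty$, is sufficient to invoke a martingale LIL (for example, Stout, 1970, or Hall--Heyde, 1980, Theorem 4.8), which then asserts
$$\limsup_{t\to\infty}\frac{|S_t|}{\sqrt{2V_t\log\log V_t}}\leq 1\quad\text{a.s. on }\Omega_\eta,$$
where $S_t=\sum_{i=1}^t X_i$. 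Since $V_t=\bar{\sigma}_w^2\eta_t$ and $\log\log(\bar{\sigma}_w^2\eta_t)/\log\log\eta_t\to 1$ on $\Omega_\eta$, one obtains $(1+\varepsilon)\bar{\sigma}_w\sqrt{2\eta_t\log\log\eta_t}$ as the eventual bound for any prescribed $\varepsilon\in(0,1)$, which is precisely \dref{sumIw<}.

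The main obstacle is to select a version of the martingale LIL whose hypotheses match B1 exactly; most textbook statements assume either a bounded conditional variance or a uniform conditional $(2+\delta)$-moment, and the second form is satisfied here with $\delta=\kappa/2-2>0$. Should a self-contained route be preferred, an alternative is to split $w_{i+1}^2-\sigma_w^2$ into a truncation at level $\eta_t^{a}$ (for a small $a>0$) and a small residual: Freedman's (or Azuma--Hoeffding's) inequality handles the bounded martingale part along a geometric subsequence $\eta_t\in[e^k,e^{k+1})$, the residual is controlled by a Marcinkiewicz--Zygmund-type bound using the $\kappa/2$-th moment, and Borel--Cantelli together with a monotonicity argument in $t$ between consecutive values of $\eta_t$ closes the estimate.
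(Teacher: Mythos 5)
Your proposal is correct and follows essentially the same route as the paper: both identify $I_{\Omega_i(\gamma,C)}(w_{i+1}^2-\sigma_w^2)$ as a martingale difference sequence with conditional quadratic variation $\bar\sigma_w^2\eta_t$ diverging on $\Omega_\eta$, and conclude via a martingale law of the iterated logarithm from Stout. The only cosmetic difference is that the paper verifies the hypothesis of Stout's Corollary 5.4.2 by first bounding the increments, showing $I_{\Omega_t(\gamma,C)}w_{t+1}^2=O(\eta^{1/\tau})$ for some $\tau>2$ via a Lai--Wei moment estimate, whereas you invoke a version of the LIL stated directly under a uniform conditional $(2+\delta)$-moment condition, which Assumption B1 supplies with $\delta=\kappa/2-2>0$.
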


\begin{proof}
By Assumption B1,   $m'\triangleq E|w_1|^{\tau}$ exists for any $\tau\in (2,\sqrt{\kappa}]$.  Observe that $\tau^2\in(1,\kappa]$, employing the Minkowski inequality and the Lyapunov inequality yields
\begin{eqnarray*}
E||w_1|^{\tau}-m'|^{\tau}\leq\left(E|w_1|^{\tau^2}\right)^{\frac{1}{\tau}}+m'
\leq  \left( \left(E|w_1|^{\kappa}\right)^{\frac{\tau}{\kappa}}+m'     \right)^{\tau}<\infty.
\end{eqnarray*}
Since $\{|w_{i}|^{\tau}-m',\mathcal{F}_i\}$ is a martingale difference sequence with
 $$\sup_{i\geq 1} E(||w_{i+1}|^{\tau}-m'|^{\tau}|\mathcal{F}_i)<\infty,$$
  \cite[Lemma 2(iii)]{lw82} shows that
$$
\sum\nolimits_{i=1}^{t} I_{\Omega_i(\gamma,C)} (|w_{i+1}|^{\tau}-m')^2=O\left(  \eta_t \right),\quad \mbox{a.s. on  }    \Omega_\eta,
$$
and hence, as $t\rightarrow\infty$,     
\begin{equation}\label{|w|<}
I_{\Omega_t(\gamma,C) }w^2_{t+1}=O(  \sqrt[\tau]{\eta}),\quad \mbox{a.s. on  }    \Omega_\eta.
\end{equation}
Note that
$$\sum\nolimits_{i=1}^{t}  E(I_{\Omega_i(\gamma,C)}(w^2_{i+1}-\sigma^2_w)^2|\mathcal{F}_i)=\bar{\sigma}^2_{w}\eta_t\rightarrow\infty,\quad \mbox{a.s. on  } \Omega_\eta,$$
which, together with \dref{|w|<} and $\tau>2$, implies that as $i\rightarrow\infty$,
$$
\dfrac{I_{\Omega_i(\gamma,C)} |w^2_{i+1}-\sigma^2_w| \sqrt{2\log\log (\bar{\sigma}^2_{w}\eta_i)}}{\bar{\sigma}_{w}\sqrt{\eta_i}}=O\left( \eta^{-(\frac{1}{2}-\frac{1}{\tau})}_i  \sqrt{\log \log\eta_{i}}  \right)\rightarrow 0   ,\quad \mbox{a.s. on  } \Omega_\eta.
$$
Applying \cite[Corollary 5.4.2]{stout} to the martingale difference sequence $\{I_{\Omega_{i-1}(\gamma,C)}(  w^2_{i}-\sigma^2_w), \mathcal{F}_{i}\}$ yields
$$
\limsup_{t\to \infty} \dfrac{\left|\sum_{i=1}^{t}I_{\Omega_i(\gamma,C)}(  w^2_{i+1}-\sigma^2_w)\right|}{\sqrt{2\bar{\sigma}^2_{w}\eta_t\log\log (\bar{\sigma}^2_{w}\eta_t)}}\leq 1,\quad \mbox{a.s. on  } \Omega_\eta,
$$
which leads to \dref{sumIw<} immediately.
\end{proof}

\begin{lemma}\label{Gthe'}
Under Assumption B1, if  \dref{PC1} and \dref{rC2} hold for every $\theta_t$ defined in Lemma \ref{minlambda}, then for any $\lambda\in (0,\frac{1}{4}-\frac{1}{2\kappa})$,
\begin{eqnarray*}
\|\tilde{\theta}_{t}\|=O\left(\dfrac{1}{t^{\frac{1}{4}-\frac{1}{2\kappa}-\lambda}}\right)\rightarrow 0,\quad \mbox{a.s. on  }    \Omega'_\eta,
\end{eqnarray*}
where $\Omega'_\eta\triangleq\{t/\eta_t=O(1)\}$.
\end{lemma}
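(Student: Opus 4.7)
Substituting $y_{i+1}=f(\theta,\varphi_i)+u_i+w_{i+1}$ into \dref{hatGt=0} produces the decomposition
\[
\hat G_t(x,x') = A_t(x) - 2D_t(x) + (\sigma_w^2 - x')\,\eta_t(\gamma) + R_t,
\]
with $\Delta f_i(x):=f(x,\varphi_i)-f(\theta,\varphi_i)$, $A_t(x):=\sum_{i=1}^{t-1}(\Delta f_i(x))^2 I_{\Omega_i(\gamma,C)}$, $D_t(x):=\sum_{i=1}^{t-1}\Delta f_i(x)w_{i+1}I_{\Omega_i(\gamma,C)}$, and $R_t:=\sum_{i=1}^{t-1}(w_{i+1}^2-\sigma_w^2)I_{\Omega_i(\gamma,C)}$. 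Lemma \ref{sumw-sig} gives $|R_t|=O(\sqrt{\eta_t\log\log\eta_t})$ a.s.\ on $\Omega'_\eta$. The argument then proceeds in two steps: first, show that a deterministic reference grid pair $(i_0,j_0)$ close to $(\theta,\sigma_w^2)$ belongs to $\mathcal{J}_t$ for all sufficiently large $t$, so that $\mathcal{J}_t\neq\emptyset$ and the minimality rule \dref{ij*} forces $\sigma^2_{tj^*}\leq \sigma^2_{tj_0}$; second, exploit the membership $(i^*,j^*)\in\mathcal{J}_t$ together with the lower bound \dref{PC1} on $A_t$ to extract the rate.

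Since the grid side lengths are $t^{-(1/4-1/(2\kappa)-\lambda)}$ on $\Theta$ and $t^{-(1/2-1/\kappa-\lambda)}$ on $\Sigma_t^0$, there exist $o_{ti_0}$ with $\|o_{ti_0}-\theta\|=O(t^{-(1/4-1/(2\kappa)-\lambda)})$ and $\sigma^2_{tj_0}$ with $|\sigma^2_{tj_0}-\sigma_w^2|=O(t^{-(1/2-1/\kappa-\lambda)})$ for all large $t$ (in each of the three scenarios for $\Sigma_t^0$). Bounding $|\Delta f_i(x)|$ via \dref{rC2} yields $A_t(o_{ti_0})=O(\|o_{ti_0}-\theta\|^2\eta_t)=O(t^{1/2+1/\kappa+2\lambda})$ and $(\sigma^2_w-\sigma^2_{tj_0})\eta_t=O(t^{1/2+1/\kappa+\lambda})$. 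Combined with the cross-term bound obtained next, this shows $|\hat G_t(o_{ti_0},\sigma^2_{tj_0})|\leq C_\phi t^{1/2+1/\kappa+2\lambda}$ for all $t$ large enough; the $+1$ in the definition of $C_\phi$ absorbs the implicit constants.

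The delicate point is $D_t(x)$. By the mean value theorem, $\Delta f_i(x)=\phi_i(\xi_{i,x})^T(x-\theta)$ for a measurable selection $\xi_{i,x}\in [x,\theta]\subset \Theta$. Decompose
\[
\sum_i\phi_i(\xi_{i,x})w_{i+1}I_{\Omega_i(\gamma,C)} = M_t(\theta) + E_t(x),\qquad M_t(\theta):=\sum_i\phi_i(\theta)w_{i+1}I_{\Omega_i(\gamma,C)}.
\]
The key observation is that $M_t(\theta)$ and, at each \emph{fixed} $x$, $E_t(x)$ are $\{\mathcal{F}_{i+1}\}$-martingales with bounded conditional variances. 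Lai-Wei's strong law combined with \dref{rC2} gives $\|M_t(\theta)\|^2=O(r_t\log r_t)=O(\eta_t\log\eta_t)$ a.s.\ on $\Omega'_\eta$, and Lipschitz continuity of $\phi_i$ on $\Theta\times\{\|z\|\leq \gamma\}$ yields $\|E_t(x)\|^2=O(\|x-\theta\|^2\eta_t\log\eta_t)$ a.s.\ at each fixed $x$. A Borel-Cantelli union bound over the polynomially many (of order $t^{n(1/4-1/(2\kappa)-\lambda)}$) grid points of $\Theta$ at time $t$ lifts this to a simultaneous bound over all grid centers. Young's inequality together with hypothesis \dref{PC1} then produces $|D_t(x)|\leq \tfrac14 A_t(x)+O(\log t)$ uniformly over the grid.

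Inserting all estimates into the constraint $|\hat G_t(o_{ti^*},\sigma^2_{tj^*})|\leq C_\phi t^{1/2+1/\kappa+2\lambda}$ and using $\sigma^2_{tj^*}\leq \sigma^2_{tj_0}$ yields $A_t(o_{ti^*})=O(t^{1/2+1/\kappa+2\lambda})$. Hypothesis \dref{PC1} applied with $\theta_{t,i}=\xi_{i,o_{ti^*}}\in\Theta$ furnishes $A_t(o_{ti^*})\geq C_1\eta_t\,\|o_{ti^*}-\theta\|^2\geq C_1'\,t\,\|\tilde\theta_t\|^2$ on $\Omega'_\eta$, so $\|\tilde\theta_t\|^2=O(t^{-(1/2-1/\kappa-2\lambda)})$, which is the claimed rate. \textbf{Main obstacle.} The cross-term analysis is the principal difficulty, because the selected grid point $o_{ti^*}$ is chosen from the whole trajectory up to time $t$ and is not $\mathcal{F}_i$-measurable, so classical martingale machinery does not apply directly; the decomposition $M_t(\theta)+E_t(x)$ isolates an adapted martingale at the \emph{fixed} point $\theta$ from a Lipschitz-controlled remainder, and the polynomial grid count makes the subsequent union bound tractable.
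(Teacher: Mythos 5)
Your overall strategy coincides with the paper's: exhibit a reference grid pair $(o_{ti_0},\sigma^2_{tj_0})$ within grid resolution of $(\theta,\sigma^2_w)$, show it lies in $\mathcal{J}_t$ so that $\mathcal{J}_t\neq\emptyset$ and the minimality rule \dref{ij*} pins $\sigma^2_{tj^*}$ from above, and then turn the constraint $|\hat G_t(o_{ti^*},\sigma^2_{tj^*})|\leq C_\phi t^{\frac12+\frac1\kappa+2\lambda}$ together with the eigenvalue bound \dref{PC1} into the rate; your quadratic-form and variance-term estimates match \dref{Po2} and the use of Lemma \ref{sumw-sig} exactly, and your contrapositive extraction of $A_t(o_{ti^*})=O(t^{\frac12+\frac1\kappa+2\lambda})$ is equivalent to the paper's contradiction argument. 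The one place you genuinely diverge is the cross term. The paper does not decompose it around $\theta$: it treats, for each of the $N_o(t)=O(t^n)$ grid centers $o_{ti}$, the adapted martingale transform $\sum_s 2(f(o_{ti},\varphi_s)-f(\theta,\varphi_s))I_{\Omega_s(\gamma,C)}w_{s+1}$ directly and invokes the maximal inequality of Huang--Guo (their Lemma 3.2) to bound the maximum over all $i\leq N_o(t)$ by $O(\sqrt{\eta_t}\log\eta_t)+o(\sqrt{\eta_t}\,t^{\frac1\kappa+\lambda}\log t)$, using $|w_s|=o(s^{\frac1\kappa+\lambda})$ to handle the heavy tails. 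Your route via $M_t(\theta)+E_t(x)$ plus Young's inequality would also close the argument, but it has two soft spots: (i) you invoke Lipschitz continuity of $\phi_i=\partial f/\partial x$ in $x$, whereas the paper only assumes $\partial f/\partial x$ exists and is continuous (a modulus-of-continuity version would still suffice, but as written you are using a hypothesis the paper does not grant); and (ii) the ``Borel--Cantelli union bound over polynomially many grid points'' under only $E|w_1|^\kappa<\infty$, $\kappa>4$, is precisely the nontrivial step --- it is the content of the Huang--Guo lemma, and asserting it without the truncation/maximal-inequality machinery leaves the hardest estimate unproved. You correctly identified the non-adaptedness of $o_{ti^*}$ as the main obstacle; the paper's resolution is simply to quote the ready-made uniform bound rather than to rebuild it.
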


\begin{proof}
Fix $\lambda\in (0,\frac{1}{4}-\frac{1}{2\kappa})$  in the algorithm and let $\theta$  be the true value of the parameter.  It is clear that for every sufficiently large $t$,
$
(\theta,\sigma^2_w)\in  \Theta \times  \Sigma_t.
$
Therefore, there are two  points $o_{ti_t}\in \Theta$ and $\sigma^2_{tj_t}\in \Sigma_t$ such that
\begin{eqnarray}\label{osigma'}
\|\theta-o_{ti_t}\|\leq  \frac{n^{\frac{1}{2}}}{2t^{\frac{1}{4}-\frac{1}{2\kappa}-\lambda}}\,\,\, \mbox{and}\,\,\, 0\leq \sigma^2_{tj_t}-  \sigma^2_w< \frac{1}{t^{\frac{1}{2}-\frac{1}{\kappa}-\lambda}}.
\end{eqnarray}
Since $\Theta$ is convex,   \dref{hatGt=0} shows
\begin{eqnarray}\label{Gtrho}
\hat G_{t}(o_{ti_t},\sigma^2_{tj_t})&=&  \sum\nolimits_{s=1}^{t-1} (f(o_{ti_t},  \varphi_{s})-f(\theta, \varphi_{s})-w_{s+1})^2 I_{\Omega_s(\gamma,C)}-\eta_{t-1}(\gamma)\sigma^2_{tj_t}\nonumber\\
 &=&\sum\nolimits_{s=1}^{t-1} (\phi^T_{s}(\theta_{ts})(o_{ti_t}-\theta))^2I_{\Omega_s(\gamma,C)}\nonumber\\
 &&-\sum\nolimits_{s=1}^{t-1}(2(f(o_{ti_t}, \varphi_{s})-f(\theta,  \varphi_{s}))w_{s+1}-w^2_{s+1})I_{\Omega_s(\gamma,C)}  -\eta_{t-1}(\gamma)\sigma^2_{tj_t} \nonumber\\
 &=& (o_{ti_t}-\theta)^T P^{-1}_{t}(\theta_{t}) (o_{ti_t}-\theta) \nonumber\\
 &&-\sum\nolimits_{s=1}^{t-1} (2(f(o_{ti_t}, \varphi_{s})-f(\theta,  \varphi_{s}))w_{s+1}-w^2_{s+1})I_{\Omega_s(\gamma,C)}-\eta_{t-1}(\gamma)\sigma^2_{tj_t},
    \end{eqnarray}
where $\theta_{ts}, s\in [1,t]$ are $t$ random variables taking values in $ \Theta$  and $\theta_{t}=\mbox{col}\{\theta_{t1},\ldots,\theta_{tt}\}.$

 We now estimate the three terms in \dref{Gtrho}. First, by \dref{rC2}, \dref{osigma'} and $\eta_t\leq t$,
 \begin{eqnarray}\label{Po2}
 (o_{ti_t}-\theta)^T P^{-1}_{t}(\theta_{t}) (o_{ti_t}-\theta)&\leq &r_{t-1}\|o_{ti_t}-\theta\|^2\nonumber\\
 &\leq& \frac{C_2\eta_t n}{4t^{\frac{1}{2}-\frac{1}{\kappa}-2\lambda}}\leq \frac{C_2 n}{4}t^{\frac{1}{2}+\frac{1}{\kappa}+2\lambda}.
   \end{eqnarray}
Next, for each  $t\geq 1$, denote the number of points $o_{ti}$  in the algorithm by $N_o(t)$. Evidently, $N_o(t)=O(t^n)$. We estimate the following sequences
$$\sum\nolimits_{s=1}^{t-1} 2(f(o_{ti}, \varphi_{s})-f(\theta,  \varphi_{s}))I_{\Omega_s(\gamma,C)} w_{s+1},\quad i=1,2,\ldots, N_o(t), \,\, t=1,2,\ldots$$
To this end, note that by Assumption B1 and the Borel-Cantelli  theorem,
$$|w_s|=o(s^{\frac{1}{\kappa}+\lambda}),\quad s\rightarrow\infty\quad \mbox{a.s.}.$$
Moreover, $2(f(o_{ti}, \varphi_{s})-f(\theta,  \varphi_{s}))I_{\Omega_s(\gamma,C)}$ is $\mathcal{F}_s$-measurable for all $o_{ti}, i\in [1,N_o(t)], t\geq 1$.  Then, in view of  \cite[Lemma 3.2]{huangguo90}, it yields
\begin{eqnarray}\label{maxsumffw}
&&\max_{1\leq i\leq N_o(t)}|\sum\nolimits_{s=1}^{t-1} 2(f(o_{ti}, \varphi_{s})-f(\theta,  \varphi_{s}))I_{\Omega_s(\gamma,C)} w_{s+1}|\nonumber\\
&=& O(\sqrt{\eta_t}\log \eta_t)+o(\sqrt{\eta_t}t^{\frac{1}{\kappa}+\lambda}\log t)=o(t^{\frac{1}{2}+\frac{1}{\kappa}+\lambda}\log t),\quad \mbox{a.s. on } \Omega_\eta.
\end{eqnarray}
and hence
\begin{eqnarray}\label{sumffw}
\qquad |\sum\nolimits_{s=1}^{t-1} 2(f(o_{ti_t}, \varphi_{s})-f(\theta,  \varphi_{s}))I_{\Omega_s(\gamma,C)} w_{s+1}|= o(t^{\frac{1}{2}+\frac{1}{\kappa}+\lambda}\log t),\quad \mbox{a.s. on } \Omega_\eta.
\end{eqnarray}
At last,
 for all sufficiently large $t$,  Lemma \ref{sumw-sig} implies
\begin{eqnarray}\label{w2rhosi}
\qquad && |\sum\nolimits_{s=1}^{t-1}I_{\Omega_s(\gamma,C)} w^2_{s+1}-\sigma^2_w\eta_{t-1}(\gamma)| \leq 2 \bar\sigma_{w}  \sqrt{2\eta_{t}\log \log\eta_{t}}=o( t^{\frac{1}{2}+\lambda}),\quad \mbox{a.s. on  }    \Omega_\eta,
    \end{eqnarray}
which, together with \dref{osigma'}, yields
\begin{eqnarray}\label{w2rho}
&& |\sum\nolimits_{s=1}^{t-1}I_{\Omega_s(\gamma,C)} w^2_{s+1}-\sigma^2_{tj_t}\eta_{t-1}(\gamma)| =o( t^{\frac{1}{2}+\frac{1}{\kappa}+2\lambda}),\quad \mbox{a.s. on  }    \Omega_\eta,
    \end{eqnarray}
So, combining \dref{Po2}, \dref{sumffw} and \dref{w2rho}, for all sufficiently large $t$,
\begin{eqnarray*}
|\hat G_{t}(o_{ti_t},\sigma^2_{tj_t})|\leq (C_2 n/4+1)t^{\frac{1}{2}+\frac{1}{\kappa}+2\lambda},\quad \mbox{a.s. on  }    \Omega_\eta.
\end{eqnarray*}
Note that $C_\phi=C_2 n/4+1$, then $\mathcal{J}_{t}\neq \emptyset$. By \dref{ij*} and \dref{osigma'}, for every sufficiently large $t$, there is a point $(o_{ti^*},\sigma^2_{tj^*})\in \mathcal{J}_{t}$ satisfying
\begin{eqnarray}\label{tj*}
\sigma^2_{tj^*}-  \sigma^2_w<\frac{1}{t^{\frac{1}{2}-\frac{1}{\kappa}-\lambda}}.
\end{eqnarray}

We claim that $\|o_{ti^*}-\theta\|^2=O\left(\frac{1}{t^{\frac{1}{2}-\frac{1}{\kappa}-2\lambda} }\right)$ on $\Omega'_\eta$ almost surely. Otherwise, there
is a set $\Omega''_\eta\subset \Omega'_\eta$ with $P(\Omega''_\eta)>0$ such that
$$
\limsup_{t\to \infty}\|o_{ti^*}-\theta\|^2 t^{\frac{1}{2}-\frac{1}{\kappa}-2\lambda}=\infty,\quad \mbox{on }\Omega''_\eta.
$$
By \dref{PC1}, \dref{maxsumffw}, \dref{w2rhosi}, \dref{tj*} and the fact $\eta_{t-1}(\gamma)
\leq t$, a random $\theta_{t,ti^*}$ exists that for all sufficiently large $t$,
\begin{eqnarray*}
&&\limsup_{t\to \infty}\frac{|\hat G_{t}(o_{ti^*},\sigma^2_{tj^*})|}{ t^{\frac{1}{2}+\frac{1}{\kappa}+2\lambda} }\\
&\geq & \limsup_{t\to \infty} \left(  \frac{ \lambda_{\min}(P^{-1}_{t}(\theta_{t,ti^*}))\|o_{ti^*}-\theta\|^2}{ t^{\frac{1}{2}+\frac{1}{\kappa}+2\lambda} }+ \dfrac{(\sigma^2_w-  \sigma^2_{tj^*})\eta_{t-1}(\gamma)}{ t^{\frac{1}{2}+\frac{1}{\kappa}+2\lambda} }-o(1)\right)\\
&\geq &\limsup_{t\to \infty} \left(  C_1 \|o_{ti^*}-\theta\|^2t^{\frac{1}{2}-\frac{1}{\kappa}-2\lambda}-o(1)\right)=\infty,\quad \mbox{a.s. on  }    \Omega''_\eta,
\end{eqnarray*}
which contradicts to the  fact that $(i^*,j^*)\in \mathcal{J}_{t}$. Consequently, by \dref{the'},
$$
\|\hat\theta_{t}-\theta\|^2=\|o_{ti^*}-\theta\|^2=O\left(\dfrac{1}{t^{\frac{1}{2}-\frac{1}{\kappa}-2\lambda}}\right),\quad \mbox{a.s. on  }    \Omega'_\eta,
$$
as desired in \dref{the'err}.
\end{proof}

\emph{Proof of Theorem \ref{the}:}  When the closed-loop system is stable, $P(\Omega'_\eta)=1$ and  hence the theorem is a  direct result of Lemmas \ref{sumaa}--\ref{Gthe'}.



\end{document}